 \newtheorem{thm}{Theorem}[section]
 \newtheorem{cor}[thm]{Corollary}
 \newtheorem{lem}[thm]{Lemma}
 \numberwithin{equation}{section}
\newcommand{\fa}{\mathfrak{a}}
\newcommand{\fb}{\mathfrak{b}}
\newcommand{\fc}{\mathfrak{c}}
\newcommand{\fe}{\mathfrak{e}}
\newcommand{\fg}{\mathfrak{g}}
\newcommand{\fr}{\mathfrak{r}}
\newcommand{\fs}{\mathfrak{s}}
\newcommand{\cA}{\mathcal{A}}
\newcommand{\cB}{\mathcal{B}}
\newcommand{\cC}{\mathcal{C}}
\newcommand{\cZ}{\mathcal{Z}}
\newcommand{\cD}{\mathcal{D}}
\newcommand{\cE}{\mathcal{E}}
\newcommand{\cF}{\mathcal{F}}
\newcommand{\cI}{\mathcal{I}}
\newcommand{\cJ}{\mathcal{J}}
\newcommand{\cK}{\mathcal{K}}
\newcommand{\cN}{\mathcal{N}}
\newcommand{\mC}{\mathbb{C}}
\newcommand{\mI}{\mathbb{I}}
\newcommand{\mN}{\mathbb{N}}
\newcommand{\mR}{\mathbb{R}}
\newcommand{\fA}{\mathfrak{A}}
\newcommand{\fJ}{\mathfrak{J}}
\newcommand{\fZ}{\mathfrak{Z}}
\newcommand{\alg}{\operatorname{alg}}
\newcommand{\clos}{\operatorname{clos}}
\newcommand{\eps}{\varepsilon}
\begin{document}
%
%
%
%
%
%

\title[Singular integral operators with shifts]
{Sufficient Conditions for Fredholmness\\
of Singular Integral Operators with Shifts\\
and Slowly Oscillating Data}

\author[A. Yu. Karlovich]{Alexei Yu. Karlovich}
\address{
Departamento de Matem\'atica\\
Faculdade de Ci\^encias e Tecnologia\\
Universidade Nova de Lisboa\\
Quinta da Torre\\
2829--516 Caparica\\
Portugal}
\email{oyk@fct.unl.pt}

\author[Yu. I. Karlovich]{Yuri I. Karlovich}
\address{%
Facultad de Ciencias \\
Universidad Aut\'onoma del Estado de Morelos\\
Av. Universidad 1001, Col. Chamilpa,\\
C.P. 62209 Cuernavaca, Morelos, \\
M\'exico}
\email{karlovich@uaem.mx}

\author[A. B. Lebre]{Amarino B. Lebre}
\address{%
Departamento de Matem\'atica \\
Instituto Superior T\'ecnico \\
Universidade T\'ecnica de Lisboa\\
Av. Rovisco Pais, \\
1049--001 Lisboa \\
Portugal}
\email{alebre@math.ist.utl.pt}

\thanks{This work is partially supported by ``Centro de An\'alise Funcional e Aplica\c{c}\~oes"
at Instituto Superior T\'ecnico (Lisboa, Portugal), which is financed by FCT (Portugal).
The second author is also supported by the SEP-CONACYT Project No. 25564 (M\'exico)
and by PROMEP (M\'exico) via ``Proyecto de Redes".}

\subjclass{Primary 45E05; Secondary 47A53, 47B35, 47G10, 47G30.}

\keywords{%
Orientation-preserving non-Carleman shift; Cauchy singular
integral operator; slowly oscillating function; Mellin
pseudodifferential operator; Fredholmness.}

\date{September 27, 2010}


\begin{abstract}
Suppose $\alpha$ is an orientation preserving diffeomorphism
(shift) of $\mR_+=(0,\infty)$  onto itself with the only fixed
points $0$ and $\infty$. We establish sufficient conditions for the
Fredholmness of the singular integral operator
\[
(aI-bW_\alpha)P_++(cI-dW_\alpha)P_-
\]
acting on $L^p(\mR_+)$ with $1<p<\infty$, where $P_\pm=(I\pm S)/2$,
$S$ is the Cauchy singular integral
operator, and $W_\alpha f=f\circ\alpha$ is the shift operator,
under the assumptions that the coefficients $a,b,c,d$ and the
derivative $\alpha'$ of the shift are bounded and continuous on
$\mR_+$ and may admit discontinuities of slowly oscillating type
at $0$ and $\infty$.
\end{abstract}
\maketitle
\section{Introduction}
Let $\cB(X)$ denote the Banach algebra of all bounded linear
operators acting on a Banach space $X$, let $\cK(X)$ be the closed
two-sided ideal of all compact operators in $\cB(X)$, and let
$\cB^\pi(X):=\cB(X)/\cK(X)$ be the Calkin algebra of the cosets
$A^\pi:=A+\cK(X)$ where $A\in\cB(X)$. An operator $A\in\cB(X)$ is
said to be \textit{Fredholm} if its image is closed and the spaces
$\ker A$ and $\ker A^*$ are finite-dimensional.

Through this paper we will assume that $1<p<\infty$. Let
$C_b(\mR_+)$ denote the $C^*$-algebra of all bounded continuous
functions on $\mR_+:=(0,+\infty)$, and let $\alpha$ be an
orientation-preserving diffeomorphism of $\mR_+$ onto itself,
which has only two fixed points $0$ and $\infty$. The function
$\alpha$ is referred to as an {\it orientation-preserving
non-Carleman shift} on $\mR_+$. If $\log\alpha'\in C_b(\mR_+)$,
then the shift operator $W_\alpha$, given by $W_\alpha
f=f\circ\alpha$, is an isomorphism of the Lebesgue space
$L^p(\mR_+)$ onto itself. As is well known, the Cauchy singular
integral operator $S$ given by
\[
(Sf)(t):=\lim_{\varepsilon\to 0} \frac{1}{\pi i}
\int_{\mR_+\setminus(t-\varepsilon,t+\varepsilon)}
\frac{f(\tau)}{\tau-t}\:d\tau\quad(t\in\mR_+)
\]
is bounded on the Lebesgue space $L^p(\mR_+)$. Then the
operators $P_\pm:=\frac{1}{2}(I\pm S)$ also are in
$\cB(L^p(\mR_+))$.

The Fredholm theory of singular integral operators with
discontinuous coefficients and shifts on Lebesgue spaces has the
long and rich history. We mention the monographs by Gohberg and
Krupnik \cite{GK92}, Mikhlin and Pr\"ossdorf \cite{MP86}, and
B\"ottcher and the second author \cite{BK97} for the Fredholm
theory of singular integral operators with jump discontinuities
(and without shifts); the books by
Litvinchuk \cite{L77},
Roch and Silbermann \cite{RS90},
Kravchenko and Litvinchuk \cite{KL94},
Antonevich \cite{A95},
Karapetiants and Samko \cite{KS01},
and the~references therein for the Fredholm theory of singular integral
operators with shifts. In all these sources coefficients of singular
integral operators and derivatives of shifts are supposed to be
either continuous or piecewise continuous.

Singular integral operators with coefficients admitting
discontinuities of slowly oscillating type were considered in
\cite{R96,R98,BKR98,BKR00}. We also mention
the works \cite{BFK06,BFK07,BFK08}, where $C^*$-algebras of
singular integral operators with piecewise slowly oscillating
coefficients and various classes of shifts with continuous
derivatives were considered in the setting of $L^2$-spaces.
Singular integral ope\-rators with shifts and slowly oscillating
data were studied in \cite{K08,K08-AMADE,K09,KL01} by applying the theory
of pseudodifferential operators with so-called compound (double)
non-regular symbols. This approach requires at least three times
continuous differentiability of slowly oscillating shifts.

This paper is devoted to singular integral operators with slowly
oscillating coefficients and slowly oscillating non-Carleman
shifts preserving the orientation in the $L^p(\mR_+)$-setting.
Our results generalize and complement those of \cite{KK81} (see
also \cite[Chap.~4, Section~2]{KL94}), where a Fredholm
criterion was obtained for a singular integral operator with
continuous coefficients and a shift being an orientation-preserving
diffeomorphism of $[0,1]$ onto itself with the only fixed points
$0$ and $1$. In contrast to previous applications of pseudodifferential
operators, our approach here is based on a simpler theory of
pseudodifferential operators with non-compound non-regular symbols
combined with the Allan-Douglas local principle (see \cite{BS06})
and invertibility results for functional operators. This allows us
to reduce the smoothness of shifts to the existence of slowly
oscillating first derivatives.

To formulate our main results explicitly, we need several
definitions. Following \cite{{S77}}, a function $f\in C_b(\mR_+)$
is called {\it slowly oscillating} (at $0$ and $\infty$) if for
each (equivalently, for some) $\lambda\in (0,1)$,
\[
\lim_{r\to s}\operatorname{osc}(f,[\lambda r,r])=0\quad
(s\in\{0,\infty\}),
\]
where
$
\operatorname{osc}(f,[\lambda r,r]):=
\sup\big\{|f(t)-f(\tau)|:t,\tau\in[\lambda r,r]\big\}
$
is the oscillation of $f$ on the segment
$[\lambda r,r]\subset\mR_+$. Obviously, the set $SO(\mR_+)$
of all slowly oscillating (at $0$ and $\infty$) functions in
$C_b(\mR_+)$ is a unital commutative $C^*$-algebra. This algebra
properly contains $C(\overline{\mR}_+)$, the $C^*$-algebra
of all continuous functions on $\overline{\mR}_+:=[0,+\infty]$.

We say that an orientation-preserving non-Carleman shift $\alpha$
is {\it slowly oscillating} (at $0$ and $\infty$) if
$\log\alpha'\in C_b(\mR_+)$ and $\alpha'\in SO(\mR_+)$. We denote
by $SOS(\mR_+)$ the set of such shifts. As we will show in
Section~\ref{sec:exponential-representation}, the shifts
$\alpha\in SOS(\mR_+)$ are represented in the form
$\alpha(t)=te^{\omega(t)}$ for $t\in\mR_+$ where $\omega\in
SO(\mR_+)$.

Our first concern is the invertibility of binomial functional
operators with slowly oscillating coefficients and a slowly
oscillating shift. The following theorem
was obtained in the $L^p(0,1)$-setting  in
\cite{KKL03}, the present version is derived
from that one in Section~\ref{sec:proof-FO}.
\begin{thm}\label{th:FO}
Suppose $a,b\in SO(\mR_+)$ and $\alpha\in SOS(\mR_+)$. The
functional operator $aI-bW_\alpha$ is invertible on the
Lebesgue space $L^p(\mR_+)$ if and only if
either
\begin{equation}\label{eq:FO-1}
\inf\limits_{t\in\mR_+}|a(t)|>0, \
\liminf\limits_{t\to s}
\left( |a(t)|-|b(t)|\big(\alpha'(t)\big)^{-1/p}\right)>0 \
(s\in\{0,\infty\});
\end{equation}
or
\begin{equation}\label{eq:FO-2}
\inf\limits_{t\in\mR_+}|b(t)|>0, \
\limsup\limits_{t\to s}
\left(|a(t)|-|b(t)|\big(\alpha'(t)\big)^{-1/p}\right)<0 \
(s\in\{0,\infty\}).
\end{equation}
If \eqref{eq:FO-1} holds, then
\begin{equation}\label{eq:FO-3}
(aI-bW_\alpha)^{-1}=\sum_{n=0}^\infty (a^{-1}bW_\alpha)^n a^{-1}I.
\end{equation}
If \eqref{eq:FO-2} holds, then
\begin{equation}\label{eq:FO-4}
(aI-bW_\alpha)^{-1}=-W_\alpha^{-1}\sum_{n=0}^\infty (b^{-1}aW_\alpha^{-1})^n b^{-1}I.
\end{equation}
\end{thm}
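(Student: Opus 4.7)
The natural plan is to reduce Theorem~\ref{th:FO} to its $L^p(0,1)$-version from \cite{KKL03} via a suitable change of variables. Fix an orientation-preserving $C^1$-diffeomorphism $\varphi:(0,1)\to\mR_+$ with $\varphi(0^+)=0$ and $\varphi(1^-)=\infty$ (a concrete choice such as $\varphi(x)=x/(1-x)$ should work), and introduce the isometric isomorphism $V:L^p(\mR_+)\to L^p(0,1)$ defined by
\[
(Vf)(x)=(\varphi'(x))^{1/p}f(\varphi(x)).
\]
Invertibility of $aI-bW_\alpha$ on $L^p(\mR_+)$ is then equivalent to invertibility of $V(aI-bW_\alpha)V^{-1}$ on $L^p(0,1)$, and any Neumann series expansion on one side transfers directly to the other by sandwiching with $V$ and $V^{-1}$.

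A direct calculation, using that $V$ intertwines $M_a$ with $M_{a\circ\varphi}$ and that $VW_\alpha V^{-1}=M_w\,W_{\tilde\alpha}$ with $\tilde\alpha=\varphi^{-1}\circ\alpha\circ\varphi$ and $w(x)=(\varphi'(x)/\varphi'(\tilde\alpha(x)))^{1/p}$, shows that
\[
V(aI-bW_\alpha)V^{-1}=\tilde aI-\tilde bW_{\tilde\alpha},
\]
where $\tilde a=a\circ\varphi$ and $\tilde b=(b\circ\varphi)\,w$. The chain rule $\tilde\alpha'(x)=\alpha'(\varphi(x))\varphi'(x)/\varphi'(\tilde\alpha(x))$ gives $w(x)=(\tilde\alpha'(x)/\alpha'(\varphi(x)))^{1/p}$, and hence
\[
|\tilde a(x)|-|\tilde b(x)|(\tilde\alpha'(x))^{-1/p}=|a(\varphi(x))|-|b(\varphi(x))|(\alpha'(\varphi(x)))^{-1/p}.
\]
Since $\varphi$ maps neighborhoods of $0$ and $1$ onto neighborhoods of $0$ and $\infty$, conditions \eqref{eq:FO-1} and \eqref{eq:FO-2} translate verbatim into their $L^p(0,1)$-analogues, and the formulas \eqref{eq:FO-3}, \eqref{eq:FO-4} follow from the corresponding $(0,1)$-formulas after conjugation by $V$.

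The main obstacle I expect is showing that the slowly oscillating classes are preserved under this change of variables: one must check that $a\in SO(\mR_+)$ implies $a\circ\varphi$ lies in the slowly oscillating class on $(0,1)$ used in \cite{KKL03}, and that $\alpha\in SOS(\mR_+)$ implies $\tilde\alpha\in SOS(0,1)$. Because slow oscillation is defined in terms of oscillation on geometric intervals $[\lambda r,r]$, this amounts to verifying that $\varphi$ respects these logarithmic scales near each endpoint. With $\varphi(x)=x/(1-x)$ one has $\varphi(x)\sim x$ as $x\to 0^+$ and $1-x\sim 1/\varphi(x)$ as $x\to 1^-$, so images of geometric intervals near $0$ and $1$ in $(0,1)$ are, up to comparable factors, geometric intervals near $0$ and $\infty$ in $\mR_+$. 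Once this compatibility is established together with $w\in SO(0,1)$ (which follows from $\alpha'\in SO(\mR_+)$ and the same scaling argument), Theorem~\ref{th:FO} is an immediate consequence of the $L^p(0,1)$-result of \cite{KKL03}.
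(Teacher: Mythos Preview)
Your proposal is correct and follows essentially the same route as the paper: the paper uses exactly the diffeomorphism $\eta(y)=y/(1-y)$, the isometry $(Gf)(y)=(1-y)^{-2/p}f(\eta(y))=(\eta'(y))^{1/p}f(\eta(y))$, computes $G(aI-bW_\alpha)G^{-1}=(a\circ\eta)I-(b\circ\eta)c_{\alpha,p}W_{\widetilde\alpha}$ with $c_{\alpha,p}=w$ in your notation, derives the same identity $|\tilde a|-|\tilde b|(\widetilde\alpha')^{-1/p}=|a\circ\eta|-|b\circ\eta|(\alpha'\circ\eta)^{-1/p}$, and then invokes the $L^p(0,1)$ result of \cite{KKL03}. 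The only point where the paper works harder than your sketch is the verification that $c_{\alpha,p}\in SO(\mI)$ and $\widetilde\alpha\in SOS(\mI)$, which it does via the exponential representation $\alpha(t)=te^{\omega(t)}$ rather than directly from $\alpha'\in SO(\mR_+)$; you should expect that step to require a short explicit computation rather than a pure scaling argument.
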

By $M(\mathfrak{A})$ denote the maximal ideal space of a unital commutative
Banach algebra $\mathfrak{A}$. Identifying the points
$t\in\overline{\mR}_+$ with the evaluation functionals $t(f)=f(t)$
for $f\in C(\overline{\mR}_+)$, we get
$M(C(\overline{\mR}_+))=\overline{\mR}_+$. Consider the fibers
\[
M_s(SO(\mR_+)):=\big\{\xi\in M(SO(\mR_+)):\xi|_{C(\mR_+)}=s\big\}
\]
of the maximal ideal space $M(SO(\mR_+))$ over the points
$s\in\{0,\infty\}$. By \cite[Proposition~2.1]{K08}, the set
\begin{equation}\label{eq:a1}
\Delta:=M_0(SO(\mR_+))\cup M_\infty(SO(\mR_+))
\end{equation}
coincides with $\operatorname{clos}_{SO^*}\mR_+\setminus\mR_+$
where $\operatorname{clos}_{SO^*}\mR_+$ is the weak-star closure
of $\mR_+$ in the dual space of $SO(\mR_+)$. Then $M(SO(\mR_+))=\Delta\cup\mR_+$.
In what follows we write $a(\xi):=\xi(a)$
for every $a\in SO(\mR_+)$ and every $\xi\in\Delta$.

We now formulate the main result of the paper.
\begin{thm}\label{th:main}
Suppose $a,b,c,d\in SO(\mR_+)$ and $\alpha\in SOS(\mR_+)$. The
singular integral operator
\begin{equation}\label{eq:def-N}
N:= (aI-bW_\alpha)P_++(cI-dW_\alpha)P_-
\end{equation}
with the shift $\alpha$ is Fredholm on the space $L^p(\mR_+)$ if
the following two conditions are fulfilled:
\begin{enumerate}
\item[(i)] the functional operators $A_+:=a I-bW_\alpha$ and
$A_-:=cI-dW_\alpha$ are invertible on the space $L^p(\mR_+)$;

\item[(ii)] for every pair $(\xi,x)\in\Delta\times\mR$,
\begin{align}
n_\xi(x)
&:=
\Big[a(\xi)-b(\xi)e^{i\omega(\xi)(x+i/p)}\Big]
\frac{1+\coth[\pi(x+i/p)]}{2}
\nonumber
\\
&\quad+
\Big[c(\xi)-d(\xi)e^{i\omega(\xi)(x+i/p)}\Big]
\frac{1-\coth[\pi(x+i/p)]}{2}\ne 0,
\label{eq:def-n}
\end{align}
where $\omega(t):=\log[\alpha(t)/t]\in SO(\mR_+)$.
\end{enumerate}
\end{thm}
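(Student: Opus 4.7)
My plan is to apply the Allan--Douglas local principle in the Calkin algebra $\cB^\pi(L^p(\mR_+))$. I would embed $N^\pi$ into a suitable Banach subalgebra $\fA^\pi$ and localize over the maximal ideal space of a central $C^*$-subalgebra whose Gelfand spectrum naturally includes the set $\Delta$ of fibers over $0$ and $\infty$. The heart of the method is to identify, for each $\xi\in\Delta$, the local representative of $N^\pi$ with a Mellin pseudodifferential operator whose symbol at height $1/p$ is precisely the function $n_\xi(x)$ from (\ref{eq:def-n}).

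Using the exponential representation $\alpha(t)=te^{\omega(t)}$ with $\omega\in SO(\mR_+)$ from Section~\ref{sec:exponential-representation}, together with the non-compound non-regular Mellin PDO calculus referenced in the introduction, one shows that at each $\xi\in\Delta$ the generators reduce to simple Mellin symbols: $fI$ becomes the constant $f(\xi)$ for $f\in SO(\mR_+)$; $W_\alpha$ becomes the Mellin convolution with symbol $e^{i\omega(\xi)(x+i/p)}$; and $P_\pm$ become Mellin multipliers with symbols $(1\pm\coth[\pi(x+i/p)])/2$. Taking the appropriate sums and products yields exactly $n_\xi(x)$ as the local Mellin symbol of $N$ at $\xi$, and by the Mellin PDO Fredholm criterion this local image is invertible precisely when $n_\xi(x)\ne 0$ for every $x\in\mR$, which is condition~(ii).

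For localizations at interior points $t\in\mR_+$, hypothesis~(i) is the decisive ingredient. I would use the factorization $N=A_+(P_++A_+^{-1}A_-P_-)$ (or its counterpart via $A_-$ in the case that (\ref{eq:FO-2}) holds for $A_-$), with $A_+^{-1}$ given explicitly by (\ref{eq:FO-3}) or (\ref{eq:FO-4}). At each interior point the local image of $N$ then becomes a composition of an invertible functional operator with a Mellin multiplier whose symbol is nonzero by the very invertibility of $A_\pm$, so $N$ is locally invertible away from $\Delta$. Once invertibility of every local image is verified, Allan--Douglas yields invertibility of $N^\pi$ in the Calkin algebra, i.e., Fredholmness of $N$.

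The main technical obstacle is the local identification of $W_\alpha$ at $\xi\in\Delta$ with the Mellin convolution of symbol $e^{i\omega(\xi)(x+i/p)}$, since $W_\alpha$ is not itself a Mellin PDO. One must establish, via careful commutator estimates, that modulo the local ideal at $\xi$ the two operators coincide, and that the commutators $[fI,W_\alpha]$ and $[P_\pm,W_\alpha]$ are absorbed by this ideal. These estimates rely crucially on $\omega\in SO(\mR_+)$ and $\alpha'\in SO(\mR_+)$ (the $SOS(\mR_+)$ hypothesis), and it is here that the non-compound Mellin PDO calculus developed in the paper achieves its promised simplification over the earlier compound-symbol approach.
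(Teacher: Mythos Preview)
Your overall plan---Allan--Douglas localization in the Calkin algebra with Mellin symbols at the fibers---matches the paper's strategy, but two of your specifics are off and constitute real gaps.

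First, your localization structure is not the one that works. The paper does \emph{not} localize over $M(SO(\mR_+))=\mR_+\cup\Delta$; it builds the central subalgebra $\cZ^\pi$ generated by $I,S,cR$ (with $c\in SO(\mR_+)$) and compacts, whose maximal ideal space is $\{-\infty,+\infty\}\cup(\Delta\times\mR)$. There are \emph{no} interior points $t\in\mR_+$ in this spectrum, so hypothesis~(i) is not used there. Instead, (i) is invoked at the two points $\pm\infty$: since $P_\mp^\pi$ lies in the ideal $\cI_{\pm\infty}^\pi$, one gets $N^\pi+\cJ_{+\infty}^\pi=A_+^\pi+\cJ_{+\infty}^\pi$ and similarly at $-\infty$, and invertibility of $A_\pm$ (together with $A_\pm^{-1}\in\cF\subset\Lambda$, which follows from Theorem~\ref{th:FO}) gives the local invertibility. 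Your factorization $N=A_+(P_++A_+^{-1}A_-P_-)$ at interior $t$ does not lead anywhere: $A_+^{-1}A_-$ is not a Mellin multiplier, and in a localization over $SO(\mR_+)$ alone the local algebra at interior $t$ still contains nontrivial copies of $S$ and $W_\alpha$.

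Second, and more seriously, $W_\alpha$ cannot be identified, even locally, with a Mellin convolution of symbol $e^{i\omega(\xi)(x+i/p)}$. That function has constant modulus $e^{-\omega(\xi)/p}$ but oscillates in $x$, so it has infinite total variation and does not lie in $C_p(\overline{\mR})$ or $V(\mR)$; it is not an admissible Mellin multiplier in the calculus used here. The paper's workaround is the heart of the argument: one observes that $W_\alpha R$ \emph{is} a genuine Mellin PDO, namely $\Phi W_\alpha R\Phi^{-1}=\operatorname{Op}(\fc)$ with $\fc(t,x)=e^{i\omega(t)(x+i/p)}r_p(x)$, because the decay of $r_p$ tames the oscillation. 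One then replaces $P_\pm$ by $H_\pm:=\dfrac{1\pm s_p(x)}{2}\,\dfrac{R^2}{(r_p(x))^2}$, which agree with $P_\pm$ modulo $\cJ_{\xi,x}^\pi$, and proves (Lemma~\ref{le:WR2}) that $(W_\alpha R^2)^\pi-e^{i\omega(\xi)(x+i/p)}(r_p(x))^2 I^\pi\in\cJ_{\xi,x}^\pi$. This is where the $\cE(\mR_+,V(\mR))$ machinery and the key Lemma~\ref{le:bxi} enter: they guarantee that the ``error'' $\operatorname{Op}(\fa-\fa(\xi,\cdot))$ factors through an element of $\cI_{\xi,x}^\pi$. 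Only after these substitutions does one obtain $N^\pi+\cJ_{\xi,x}^\pi=n_\xi(x)I^\pi+\cJ_{\xi,x}^\pi$ as a \emph{scalar} coset, so condition~(ii) gives local invertibility directly. Your proposal names the obstacle but does not supply this mechanism; without it the local identification you assert does not hold.
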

The paper is organized as follows. In
Section~\ref{sec:SO-properties} we collect properties of slowly
oscillating functions and shifts. In particular, we prove that
each slowly oscillating shift can be represented in the form
$\alpha(t)=te^{\omega(t)}$ where $\omega$ is a real-valued slowly
oscillating function. Section~\ref{sec:FO-binomial} is devoted to
the proof of Theorem~\ref{th:FO}.
In Section~\ref{sec:Mellin-convolution} we collect
properties of Mellin convolution operators with piecewise
continuous symbols. In particular, we
mention the well-known structure of the commutative algebra $\cA$
generated by the operator $S$ and the identity operator. This
algebra contains the operator with fixed singularities
\[
(Rf)(t)=\frac{1}{\pi i}\int_{\mR_+}\frac{f(\tau)}{\tau+t}d\tau
\quad(t\in\mR_+).
\]
Section~\ref{sec:Mellin-PDO} contains necessary facts from the
theory of Mellin pseudodifferential operators with slowly
oscillating symbols.

Section~\ref{sec:localization} is dedicated to the localization with
the aid of the Allan-Douglas local principle. We introduce the
algebra $\cZ$ generated by the compact operators and the operators
$I$, $S$, and $cR$, where $c$ are slowly oscillating functions.
Further we introduce the algebra $\Lambda$ of operators commuting
with the elements of $\cZ$ modulo compact operators. The
Fredholmness of an operator $A\in\Lambda$ is equivalent to the
invertibility of the coset $A^\pi$ in the quotient algebra
$\Lambda^\pi=\Lambda/\cK$. The maximal ideal space of its central
subalgebra $\cZ^\pi=\cZ/\cK$ is homeomorphic to the set
$\{-\infty,+\infty\}\cup(\Delta\times\mR)$. Since $N\in\Lambda$,
by the Allan-Douglas local principle, the Fredholmness of $N$ is
equivalent to the invertibility of the local representatives
$N^\pi+\cJ_{-\infty}^\pi$, $N^\pi+\cJ_{+\infty}^\pi$, and
$N^\pi+\cJ_{\xi,x}^\pi$ in the local algebras
$\Lambda^\pi/\cJ_{-\infty}^\pi$, $\Lambda^\pi/\cJ_{+\infty}^\pi$,
and $\Lambda^\pi/\cJ_{\xi,x}^\pi$ with
$(\xi,x)\in\Delta\times\mR$, respectively, where $\cJ_{\pm\infty}^\pi$ and
$\cJ_{\xi,x}^\pi$ are ideals of $\Lambda^\pi$.

In Section~\ref{sec:functions-in-fC} we prove that certain functions,
playing an important role in the proof of the sufficiency portion of
Theorem~\ref{th:main}, belong to the algebra of symbols of
Mellin pseudodifferential operators commuting modulo compact operators.
Section~\ref{sec:sufficiency} is dedicated to the proof of Theorem~\ref{th:main}.
The invertibility of the functional operators $aI-bW_\alpha$ and $cI-dW_\alpha$ imply
the invertibility of the cosets $N^\pi+\cJ_{+\infty}^\pi$ and
$N^\pi+\cJ_{-\infty}^\pi$ in the local algebras $\Lambda^\pi/\cJ_{+\infty}^\pi$
and $\Lambda^\pi/\cJ_{-\infty}^\pi$, respectively. On the other hand,
using the technique of Mellin pseudodifferential operators
we show that $n_\xi(x)\ne 0$ implies that the coset $N^\pi+\cJ_{\xi,x}^\pi$
is invertible in the local algebra $\Lambda^\pi/\cJ_{\xi,x}^\pi$ for
$(\xi,x)\in\Delta\times\mR$. The proof is based on the important property:
the product $W_\alpha R$ of the shift operator $W_\alpha$ and the operator $R$
with fixed singularities at $0$ and $\infty$ is similar to a Mellin
pseudodifferential operator. To finish the proof of Theorem~\ref{th:main},
it remains to apply the Allan-Douglas local principle.

Finally, we note that conditions (i) and (ii) of Theorem~\ref{th:main}
are also necessary for the Fredholmness of the operator $N$. This statement
will be proved in the forthcoming paper \cite{KKLnecessity}.
\section{Slowly oscillating functions and shifts}\label{sec:SO-properties}
\subsection{Fundamental property of slowly oscillating functions}
\begin{lem}[{\cite[Proposition~2.2]{K08}}]
\label{le:SO-fundamental-property}
Let $\{a_k\}_{k=1}^\infty$ be a countable subset of $SO(\mR_+)$ and
$s\in\{0,\infty\}$. For each $\xi\in M_s(SO(\mR_+))$ there exists a
sequence $\{t_n\}\subset\mR_+$ such that $t_n\to s$ as $n\to\infty$ and
\begin{equation}\label{eq:SO-fundamental-property}
\xi(a_k)=\lim_{n\to\infty}a_k(t_n)\quad\mbox{for all}\quad k\in\mN.
\end{equation}
Conversely, if $\{t_n\}\subset\mR_+$ is a sequence such that $t_n\to s$
as $n\to\infty$, then there exists a functional $\xi\in M_s(SO(\mR_+))$
such that \eqref{eq:SO-fundamental-property} holds.
\end{lem}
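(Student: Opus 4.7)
The plan is to work within the Gelfand representation of the unital commutative $C^*$-algebra $SO(\mR_+)$, viewing $M(SO(\mR_+))$ as the space of multiplicative linear functionals on $SO(\mR_+)$ endowed with the weak-star topology of the dual. The fundamental background fact I will invoke is that the evaluation map $\iota\colon \mR_+\to M(SO(\mR_+))$, $t\mapsto(f\mapsto f(t))$, has weak-star dense image: a function in $SO(\mR_+)$ that vanishes on $\iota(\mR_+)$ is the zero function, so under the Gelfand isomorphism its image in $C(M(SO(\mR_+)))$ cannot vanish on a proper closed subset.

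For the direction producing $\{t_n\}$ from $\xi$, I will build a shrinking sequence of basic weak-star neighborhoods of $\xi$ that simultaneously account for the countable family $\{a_k\}$ and for a single fixed test function $g_s\in C(\overline{\mR}_+)\subset SO(\mR_+)$ chosen to detect the boundary point $s$: for $s=0$ take $g_0(t)=t/(1+t)$, and for $s=\infty$ take $g_\infty(t)=1/(1+t)$; in either case $g_s$ extends continuously to $\overline{\mR}_+$ with $g_s(s)=0$ and $g_s^{-1}(0)=\{s\}$. Since $\xi\in M_s$ restricts to evaluation at $s$ on $C(\overline{\mR}_+)$, we have $\xi(g_s)=0$. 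Setting
\[
V_n:=\Big\{\eta\in M(SO(\mR_+))\,:\,|\eta(a_k)-\xi(a_k)|<1/n\ (1\le k\le n),\ |\eta(g_s)|<1/n\Big\},
\]
weak-star density of $\iota(\mR_+)$ yields a point $t_n\in\mR_+$ with $\iota(t_n)\in V_n$. The constraint $|g_s(t_n)|<1/n$ forces $t_n\to s$, while for each fixed $k$ the remaining constraints give $a_k(t_n)\to\xi(a_k)$.

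For the converse, starting with a given sequence $t_n\to s$, I will first perform a Cantor-style diagonal extraction: each $a_k$ being bounded, one can pass to a subsequence (still denoted $\{t_n\}$) along which $\lim_n a_k(t_n)$ exists for every $k\in\mN$. The functionals $\iota(t_n)$ lie in the weak-star compact closed unit ball of $SO(\mR_+)^*$, so Banach--Alaoglu yields a further subsequence converging weak-star to a functional $\xi$. Multiplicativity and unit-preservation survive weak-star limits, placing $\xi\in M(SO(\mR_+))$; the restriction $\xi|_{C(\overline{\mR}_+)}$ is the weak-star limit of the evaluations at $t_n$ on $C(\overline{\mR}_+)$, which equals evaluation at $s$ by continuity, hence $\xi\in M_s(SO(\mR_+))$, and $\xi(a_k)=\lim a_k(t_n)$ holds by construction.

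The main obstacle is the forward direction: producing a single sequence $\{t_n\}\subset\mR_+$ that both accumulates at $s\in\{0,\infty\}$ and simultaneously respects all countably many limit relations $a_k(t_n)\to\xi(a_k)$. Pure weak-star density of $\iota(\mR_+)$ in $M(SO(\mR_+))$ by itself only produces a net, not a sequence, and yields no information about how its underlying points in $\mR_+$ behave in $\overline{\mR}_+$. The essential device is to couple the basic neighborhoods $V_n$ with the auxiliary test function $g_s\in C(\overline{\mR}_+)$; this is what bridges the Gelfand topology on $M(SO(\mR_+))$ with the natural topology of $\overline{\mR}_+$ and pins down $t_n\to s$.
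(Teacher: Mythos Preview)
The paper does not supply its own proof of this lemma; it is simply cited from \cite[Proposition~2.2]{K08}. So there is no ``paper's argument'' to compare against, and I am assessing your proposal on its own terms.

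Your forward direction is clean and correct. The density of the evaluation map $\iota:\mR_+\to M(SO(\mR_+))$ is exactly what you need, your justification via the Gelfand isomorphism is valid, and coupling the basic neighborhoods $V_n$ with the detector function $g_s\in C(\overline{\mR}_+)$ is the right device to force $t_n\to s$ while simultaneously handling the countably many $a_k$.

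There is, however, a genuine gap in the converse. You invoke Banach--Alaoglu to extract a \emph{subsequence} of $(\iota(t_n))$ that converges in the weak-star topology. Banach--Alaoglu gives compactness of the closed unit ball, but sequential compactness only follows when the underlying Banach space is separable. The algebra $SO(\mR_+)$ is \emph{not} separable (after the change of variable $t=e^x$ it contains, for instance, uncountable families of uniformly separated slowly oscillating step-like functions), so you cannot in general pass to a weak-star convergent subsequence.

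The fix is short. Either:
\begin{itemize}
\item use compactness of $M(SO(\mR_+))$ to pick a cluster point $\xi$ of the sequence $(\iota(t_n))$; then for every $f\in SO(\mR_+)$ for which $\lim_n f(t_n)$ exists (in particular each $a_k$ after your diagonal extraction, and each $f\in C(\overline{\mR}_+)$), one automatically has $\xi(f)=\lim_n f(t_n)$, since any subsequential limit must agree with the full limit; or
\item work in the separable closed unital $C^*$-subalgebra $B\subset SO(\mR_+)$ generated by $C(\overline{\mR}_+)$ and the countable family $\{a_k\}$, where your sequential argument goes through verbatim to produce $\xi_0\in M(B)$, and then lift $\xi_0$ to some $\xi\in M(SO(\mR_+))$ via surjectivity of the restriction map $M(SO(\mR_+))\to M(B)$.
\end{itemize}
Either route closes the gap with no change to the overall strategy.
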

\subsection{Exponential representation of slowly oscillating shifts}
\label{sec:exponential-representation}
\begin{lem}\label{le:exp-repr}
An orientation-preserving non-Carleman shift
$\alpha:\mR_+\to\mR_+$ belongs to $SOS(\mR_+)$ if and only if
\begin{equation}\label{eq:exp-repr-1}
\alpha(t)=te^{\omega (t)},\quad t\in \mR_+,
\end{equation}
for some real-valued function $\omega\in SO(\mR_+)\cap C^1(\mR)$ such that
the function $t\mapsto t\omega^\prime(t)$ also belongs to $SO(\mR_+)$ and
\begin{equation}\label{eq:exp-repr-2}
\inf_{t\in\mR_+}\big(1+t\omega'(t)\big)>0.
\end{equation}
\end{lem}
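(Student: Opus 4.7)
The plan is to pass between $\alpha$ and $\omega$ via the natural change of variable $\omega(t):=\log(\alpha(t)/t)$ (equivalently $\alpha(t)=te^{\omega(t)}$), with the main technical content concentrated in one slow-oscillation argument comparing $\alpha(t)/t$ and $\alpha'(t)$. The key identity, valid for any $\alpha$ of the claimed form, is
\[
\alpha'(t)=e^{\omega(t)}\bigl(1+t\omega'(t)\bigr),
\]
which interlocks the two sides of the equivalence through $1+t\omega'(t)=t\alpha'(t)/\alpha(t)$ and $\log\alpha'(t)=\omega(t)+\log(1+t\omega'(t))$.

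\textbf{Forward direction.} Assume $\alpha\in SOS(\mR_+)$ and set $\omega:=\log(\alpha/t)$; this is real-valued and $C^1$ on $\mR_+$. From $\log\alpha'\in C_b(\mR_+)$ one obtains $\alpha'$ bounded above and below away from $0$. Since $\alpha$ extends continuously to fix $0$, $\alpha(t)=\int_0^t\alpha'(s)\,ds$, so $\alpha(t)/t$ takes values in $[\inf\alpha',\sup\alpha']\subset(0,\infty)$, giving at once \eqref{eq:exp-repr-2} and boundedness of $\omega$. For the slow-oscillation part I would use
\[
\frac{\alpha(t)}{t}-\alpha'(t)=\frac{1}{t}\int_0^t\bigl(\alpha'(s)-\alpha'(t)\bigr)\,ds
\]
and show that the right-hand side tends to $0$ as $t\to 0$ and $t\to\infty$ by splitting the integral at $\lambda t$: the outer piece $[\lambda t,t]$ is controlled by $(1-\lambda)\,\operatorname{osc}(\alpha',[\lambda t,t])\to 0$ from slow oscillation of $\alpha'$, while the inner piece is bounded by $2\lambda\sup|\alpha'|$, after which $\lambda$ is sent to $0$. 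Consequently, $\alpha(t)/t$ equals $\alpha'(t)$ plus a continuous perturbation vanishing at $0$ and $\infty$, so $\alpha(t)/t\in C(\overline{\mR}_+)+SO(\mR_+)\subset SO(\mR_+)$. Because $SO(\mR_+)$ is a unital $C^*$-algebra and $\alpha(t)/t$ is bounded away from $0$, continuous functional calculus yields $\omega=\log(\alpha/t)\in SO(\mR_+)$ and $t\omega'(t)=\alpha'(t)/(\alpha(t)/t)-1\in SO(\mR_+)$.

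\textbf{Backward direction.} Given $\omega$ with the stated properties, define $\alpha(t):=te^{\omega(t)}$. Then $\alpha'(t)=e^{\omega(t)}(1+t\omega'(t))$ is strictly positive by \eqref{eq:exp-repr-2}, and since $\omega, t\omega'\in SO(\mR_+)\subset C_b(\mR_+)$, it is also bounded above and below away from $0$. Hence $\alpha$ is a $C^1$-diffeomorphism of $\mR_+$ fixing $0$ and $\infty$, $\log\alpha'=\omega+\log(1+t\omega')\in C_b(\mR_+)$, and $\alpha'\in SO(\mR_+)$ by continuous functional calculus applied to the slowly oscillating functions $\omega$ and $t\omega'$ inside the $C^*$-algebra $SO(\mR_+)$.

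The main obstacle is the slow-oscillation step in the forward direction: the $\lambda$-splitting argument must be executed simultaneously at $0$ and at $\infty$, using crucially that slow oscillation of $\alpha'$ at $0$ is strong enough to absorb the trivial $L^\infty$-estimate on the inner piece $[0,\lambda t]$ once $\lambda$ is sent to $0$. Once this is in place, every other verification reduces to elementary calculus and the functional-calculus structure of $SO(\mR_+)$.
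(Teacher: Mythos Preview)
Your argument is correct, and the backward direction coincides with the paper's. The forward direction differs in one interesting way: to prove that $D(t):=\alpha(t)/t$ lies in $SO(\mR_+)$, the paper estimates $\operatorname{osc}(D,[r/2,r])$ directly via
\[
D(t)-D(\tau)=\int_0^1\bigl(\alpha'(tx)-\alpha'(\tau x)\bigr)\,dx,
\]
bounding the oscillation by $\int_0^1\operatorname{osc}(\alpha',[rx/2,rx])\,dx$ and then splitting the $x$-integral near $0$ (the split point being $\delta_\infty/r$ in the case $r\to\infty$). You instead show that $D-\alpha'$ extends continuously to $\overline{\mR}_+$ with value $0$ at both endpoints, via your $\lambda$-splitting of $\tfrac{1}{t}\int_0^t(\alpha'(s)-\alpha'(t))\,ds$, and then invoke $C(\overline{\mR}_+)\subset SO(\mR_+)$. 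Your route is slightly more conceptual—once $D-\alpha'\in C(\overline{\mR}_+)$ is known, no further oscillation computation is needed—while the paper's route is more self-contained, never appealing to the inclusion $C(\overline{\mR}_+)\subset SO(\mR_+)$. Both splittings are really the same idea (isolate a short interval near $0$ carrying an $L^\infty$ bound, and use slow oscillation on the rest), just applied to different integral identities.
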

\begin{proof}
\textit{Necessity.} Let $\alpha\in SOS(\mR_+)$. Then
$\log\alpha'\in C_b(\mR_+)$ and hence
\begin{equation}\label{eq:exp-repr-3}
0<m_\alpha:=\inf_{t\in\mR_+}\alpha'(t)\le\sup_{t\in\mR_+}\alpha'(t)=:M_\alpha<\infty,
\end{equation}
$\alpha'\in SO(\mR_+)$, and $\alpha(0)=0$,
$\alpha(\infty)=\infty$. As $\alpha(0)=0$, we have the
representation
\[
\frac{\alpha(t)}{t}=\frac{1}{t}\int_0^t\alpha'(x)dx=\int_0^1\alpha'(tx)dx,
\]
which implies due to \eqref{eq:exp-repr-3} that
\begin{equation}\label{eq:exp-repr-4}
0<m_\alpha\le\inf_{t\in\mR_+}\frac{\alpha(t)}{t}\le\sup_{t\in\mR_+}
\frac{\alpha(t)}{t}\le M_\alpha<\infty
\end{equation}
and
\begin{equation}\label{eq:exp-repr-5}
\frac{\alpha(t)}{t}-\frac{\alpha(\tau)}{\tau}
=
\int_0^1\big(\alpha'(tx)-\alpha'(\tau x)\big)dx.
\end{equation}
Using \eqref{eq:exp-repr-4} and \eqref{eq:exp-repr-5}, we conclude that the function
$D(t):=\alpha(t)/t$ belongs to $C_b(\mR_+)$. Furthermore,
from \eqref{eq:exp-repr-5} it follows that
\begin{align}
\operatorname{osc}(D,[r/2,r])
&\le
\int_0^1\sup_{t,\tau\in[r/2,r]}\big|\alpha'(tx)-\alpha'(\tau x)\big|dx
\nonumber
\\
&=
\int_0^1\operatorname{osc}(\alpha',[rx/2,rx])dx.
\label{eq:exp-repr-6}
\end{align}
Since $\alpha'\in SO(\mR_+)$, we conclude that for every
$\eps>0$ there exist positive numbers $\delta_0<\delta_\infty$
such that $\operatorname{osc}(\alpha',[r/2,r])<\eps$ for all
$r\in(0,\delta_0)\cup(\delta_\infty,\infty)$.
Hence, for $r\in(0,\delta_0)$ and all $x\in(0,1]$,
\begin{equation}\label{eq:exp-repr-7}
\operatorname{osc}(\alpha',[rx/2,rx])<\eps,
\end{equation}
which implies due to \eqref{eq:exp-repr-6} that
\begin{equation}\label{eq:exp-repr-8}
\lim_{r\to 0}\operatorname{osc}(D,[r/2,r])=0.
\end{equation}
On the other hand, for $r>\delta_\infty$ and all
$x\in(\delta_\infty/r,1]$, we also have \eqref{eq:exp-repr-7}.
Therefore,
\[
\begin{split}
&
\int_{\delta_\infty/r}^1
\operatorname{osc}(\alpha',[rx/2,rx])dx
\le
\eps(1-\delta_\infty/r),
\\
&
\int_0^{\delta_\infty/r}
\operatorname{osc}(\alpha',[rx/2,rx])dx
\le
2\|\alpha'\|_{L^\infty(\mR_+)}\delta_\infty/r,
\end{split}
\]
whence
\[
\int_0^1 \operatorname{osc}(\alpha',[rx/2,rx])dx
\le
\eps(1-\delta_\infty/r)+2\|\alpha'\|_{L^\infty(\mR_+)}\delta_\infty/r.
\]
This implies in view of \eqref{eq:exp-repr-6} that
\begin{equation}\label{eq:exp-repr-9}
\lim_{r\to\infty}\operatorname{osc}(D,[r/2,r])=0.
\end{equation}
Thus, by \eqref{eq:exp-repr-8} and \eqref{eq:exp-repr-9}, the function
$D(t)=\alpha(t)/t$ actually belongs to $SO(\mR_+)$.

Since $SO(\mR_+)$ is a $C^*$-algebra, we infer from \eqref{eq:exp-repr-4}
that the function $\omega(t):=\log[\alpha(t)/t]$ also is in
$SO(\mR_+)$. Thus $\alpha(t)=te^{\omega(t)}$ where $\omega'\in
C(\mR_+)$. Finally, since $\alpha',\ e^\omega\in SO(\mR_+)$, it
follows from the equality
\begin{equation}\label{eq:exp-repr-10}
\alpha'(t)=\big(1+t\omega'(t)\big)e^{\omega(t)}
\end{equation}
and \eqref{eq:exp-repr-3}--\eqref{eq:exp-repr-4} that the function $t\mapsto
t\omega'(t)$ also belongs to $SO(\mR_+)$ and \eqref{eq:exp-repr-2} holds.

\textit{Sufficiency.} Let $\alpha$ be an orientation-preserving
diffeomorphism of $\mR_+$ onto itself, with the fixed points $0$
and $\infty$ only; and let $\alpha(t)=te^{\omega (t)}$, where the
functions $\omega$ and $t\mapsto t\omega'(t)$ are in $SO(\mR_+)$
and \eqref{eq:exp-repr-2} holds. Since
\[
0<\inf_{t\in\mR_+}e^{\omega(t)}\le\sup_{t\in\mR_+}e^{\omega(t)}<\infty,
\]
we infer from \eqref{eq:exp-repr-2} and \eqref{eq:exp-repr-10} that
$\log\alpha'\in C_b(\mR_+)$. Furthermore, as the functions $e^\omega$ and
$t\mapsto t\omega'(t)$ are in $SO(\mR_+)$, from \eqref{eq:exp-repr-10} it
follows that $\alpha'$ belongs to $SO(\mR_+)$. Thus, $\alpha\in SOS(\mR_+)$.
\end{proof}
The representation \eqref{eq:exp-repr-1} will be called the
\textit{exponential representation of the slowly oscillating shift
$\alpha$} and the function $\omega$ will be referred to as the
\textit{exponent function of $\alpha$}.
\subsection{Properties of slowly oscillating shifts}
\begin{lem}\label{le:continuous-SOS}
If $c\in SO(\mR_+)$ and $\alpha\in SOS(\mR_+)$, then $c\circ\alpha\in SO(\mR_+)$,
$c-c\circ\alpha\in C_b(\mR_+)$, and
$\lim\limits_{t\to s}\big(c(t)-c(\alpha(t))\big)=0$ for $s\in\{0,\infty\}$.
\end{lem}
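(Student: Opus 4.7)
The plan is to use the exponential representation of Lemma~\ref{le:exp-repr}, which gives us $\alpha(t)=te^{\omega(t)}$ for some real-valued $\omega\in SO(\mR_+)$. This representation is what lets us control $\alpha$ on dyadic-type intervals $[\lambda r,r]$ by the slow oscillation and boundedness of $\omega$.

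First I would dispose of the easy point that $c\circ\alpha\in C_b(\mR_+)$: the function $\alpha$ is a bounded continuous map of $\mR_+$ into itself, so composition with $c\in C_b(\mR_+)$ stays in $C_b(\mR_+)$, and therefore $c-c\circ\alpha\in C_b(\mR_+)$ as well.

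Next, to show $c\circ\alpha\in SO(\mR_+)$, fix $\lambda\in(0,1)$ and $s\in\{0,\infty\}$. Since $\alpha$ is orientation-preserving, $\alpha([\lambda r,r])=[\alpha(\lambda r),\alpha(r)]$, and the representation gives
\[
\frac{\alpha(\lambda r)}{\alpha(r)}=\lambda\,e^{\omega(\lambda r)-\omega(r)}\ge \lambda e^{-2\|\omega\|_\infty}=:\mu\in(0,1).
\]
Hence $[\alpha(\lambda r),\alpha(r)]\subset[\mu\,\alpha(r),\alpha(r)]$, which yields
\[
\operatorname{osc}(c\circ\alpha,[\lambda r,r])\le \operatorname{osc}(c,[\mu\,\alpha(r),\alpha(r)]).
\]
Because $\alpha$ fixes $0$ and $\infty$ and is a homeomorphism of $\mR_+$, $\alpha(r)\to s$ as $r\to s$, so the right-hand side tends to $0$ by the slow oscillation of $c$ (invoking the ``equivalently, for some $\lambda$'' clause with the parameter $\mu$). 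This proves $c\circ\alpha\in SO(\mR_+)$.

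Finally, for the limit of $c-c\circ\alpha$ at $s\in\{0,\infty\}$, I would observe that for every $t\in\mR_+$ the two points $t$ and $\alpha(t)=te^{\omega(t)}$ lie in a common interval $[\lambda_0 R(t),R(t)]$, where $R(t):=\max\{t,\alpha(t)\}$ and $\lambda_0:=e^{-\|\omega\|_\infty}\in(0,1]$. If $\lambda_0=1$ then $\omega\equiv0$, $\alpha$ is the identity, and the statement is trivial; otherwise
\[
|c(t)-c(\alpha(t))|\le \operatorname{osc}\bigl(c,[\lambda_0 R(t),R(t)]\bigr).
\]
Since $R(t)\to s$ as $t\to s$, the slow oscillation of $c$ (with parameter $\lambda_0\in(0,1)$) forces the right-hand side to $0$, completing the proof. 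The only mild obstacle is handling the interval arithmetic cleanly so that one always ends up inside a genuine $[\lambda r,r]$-type window with $\lambda\in(0,1)$; boundedness of $\omega$ from Lemma~\ref{le:exp-repr} makes this straightforward.
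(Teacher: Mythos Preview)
Your argument is correct and follows essentially the same route as the paper: both proofs exploit that $\alpha(t)/t$ is bounded away from $0$ and $\infty$ (the paper via the bounds $m_\alpha t\le\alpha(t)\le M_\alpha t$ of \eqref{eq:exp-repr-4}, you via the boundedness of $\omega$ in the exponential representation), and then trap $t$ and $\alpha(t)$ in a common $[\lambda r,r]$-window to invoke the slow oscillation of $c$. One small slip: $\alpha$ is not a \emph{bounded} map of $\mR_+$ into itself (it sends $\infty$ to $\infty$); what you need is simply that $\alpha$ is continuous and $c$ is bounded, which already gives $c\circ\alpha\in C_b(\mR_+)$.
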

\begin{proof}
Obviously, $c-c\circ\alpha\in C_b(\mR_+)$.
Since $\alpha\in SOS(\mR_+)$, we deduce from \eqref{eq:exp-repr-4} that
$m_\alpha t\le\alpha(t)\le M_\alpha t$ for every $t\in\mR_+$,
where the positive numbers $m_\alpha\le M_\alpha$ are defined in
\eqref{eq:exp-repr-3}. Hence, for every $t\in \mR_+$ we obtain
\begin{equation}\label{eq:continuous-SOS-1}
\operatorname{osc}(c\circ\alpha,[t/2,t])\le\operatorname{osc}(c,[(\lambda/2)r,r]),
\;
|c(t)-c(\alpha(t))|\le \operatorname{osc}(c,[\lambda r,r]),
\end{equation}
where $\lambda r=t\min\{m_\alpha,1\}$ and $r=t\max\{M_\alpha,1\}$.
Therefore we conclude that
$\lambda=\min\{m_\alpha,1\}/\max\{M_\alpha,1\}\in(0,1)$ except for the
trivial case $\alpha(t)=t$.
Since $c\in SO(\mR_+)$, from \eqref{eq:continuous-SOS-1}
it follows that
\[
\begin{split}
&
\lim_{t\to s}\operatorname{osc}(c\circ\alpha,[t/2,t])=\lim_{r\to s}\operatorname{osc}(c,[(\lambda/2)r,r])=0,
\\
&
\lim_{t\to s}|c(t)-c(\alpha(t))|=\lim_{r\to s}\operatorname{osc}(c,[\lambda r,r])=0
\end{split}
\]
for $s\in\{0,\infty\}$.
\end{proof}
Let $\beta:=\alpha_{-1}$ be the inverse function to $\alpha$.
\begin{lem}\label{le:SOS-inverse}
If $\alpha\in SOS(\mR_+)$, then $\beta\in SOS(\mR_+)$.
\end{lem}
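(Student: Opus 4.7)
The plan is to verify the three defining properties of $SOS(\mR_+)$ for $\beta$: (a) $\beta$ is an orientation-preserving non-Carleman shift, (b) $\log\beta'\in C_b(\mR_+)$, and (c) $\beta'\in SO(\mR_+)$. Property (a) is immediate, since $\beta$ is the inverse diffeomorphism of $\alpha$ and thus an orientation-preserving diffeomorphism of $\mR_+$ onto itself whose only fixed points are $0$ and $\infty$. For (b), differentiating the identity $\alpha(\beta(s))=s$ yields
\[
\beta'(s)=\frac{1}{\alpha'(\beta(s))},
\]
and since $\log\alpha'\in C_b(\mR_+)$ by hypothesis, the composition formula gives $\log\beta'\in C_b(\mR_+)$. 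In particular, with $m_\alpha,M_\alpha$ as in \eqref{eq:exp-repr-3}, one has $1/M_\alpha\le\beta'(s)\le 1/m_\alpha$ for every $s\in\mR_+$.

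For (c), the key intermediate step will be to show that $c\circ\beta\in SO(\mR_+)$ whenever $c\in SO(\mR_+)$. Integrating the bound on $\beta'$ and using $\beta(0)=0$ produces the analogue of \eqref{eq:exp-repr-4},
\[
\frac{1}{M_\alpha}\le\frac{\beta(s)}{s}\le\frac{1}{m_\alpha}\qquad(s\in\mR_+),
\]
which is precisely the ingredient that drove the oscillation estimate \eqref{eq:continuous-SOS-1} in the proof of Lemma~\ref{le:continuous-SOS}. Repeating that estimate verbatim with $\beta$ in place of $\alpha$ (the argument only used the two-sided linear bound on $\alpha(t)/t$, not membership of $\alpha$ in $SOS(\mR_+)$) gives $c\circ\beta\in SO(\mR_+)$ for every $c\in SO(\mR_+)$. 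Applying this to $c=\alpha'\in SO(\mR_+)$ yields $\alpha'\circ\beta\in SO(\mR_+)$. Since $SO(\mR_+)$ is a $C^*$-algebra and $\alpha'\circ\beta$ is bounded away from zero (by $m_\alpha>0$), its reciprocal also lies in $SO(\mR_+)$, and therefore $\beta'=1/(\alpha'\circ\beta)\in SO(\mR_+)$, completing the verification of (c).

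The main obstacle is the potential circularity in invoking Lemma~\ref{le:continuous-SOS} for $\beta$ before $\beta\in SOS(\mR_+)$ has been established. I expect this to be handled cleanly by pointing out that the proof of Lemma~\ref{le:continuous-SOS} uses nothing about $\alpha$ beyond the two-sided linear bound on $\alpha(t)/t$, which is already available for $\beta$ from step (b); this avoids invoking the statement of Lemma~\ref{le:continuous-SOS} itself and replaces it with its proof applied to $\beta$. Alternatively, one could present the argument in the language of exponential representations by setting $\widetilde\omega(s):=-\omega(\beta(s))$ and checking the hypotheses of Lemma~\ref{le:exp-repr}, but the direct derivative computation above is shorter.
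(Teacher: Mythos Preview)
Your proposal is correct and follows essentially the same route as the paper: both use $\beta'(t)=1/\alpha'(\beta(t))$ together with the two-sided linear bound $m_\alpha^{-1}\le\beta(s)/s\le M_\alpha^{-1}$ to transfer slow oscillation of $\alpha'$ to $\beta'$. The only cosmetic difference is that the paper writes out the explicit difference formula
\[
|\beta'(t)-\beta'(\tau)|=\frac{|\alpha'(\beta(t))-\alpha'(\beta(\tau))|}{\alpha'(\beta(t))\,\alpha'(\beta(\tau))}
\]
and argues directly from it, whereas you separate the argument into ``$c\circ\beta\in SO(\mR_+)$ for $c\in SO(\mR_+)$'' followed by ``reciprocal of an invertible element of a $C^*$-algebra''; the content is the same, and your remark about avoiding circularity with Lemma~\ref{le:continuous-SOS} is exactly what the paper's terse proof leaves implicit.
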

\begin{proof}
Since $\log\alpha^\prime\in C_b(\mR_+)$ and $\alpha'\in
SO(\mR_+)$, we infer from the relations
\[
\beta'(t)=\frac{1}{\alpha'(\beta(t))},
\quad
|\beta'(t)-\beta'(\tau)|=
\frac{|\alpha^\prime(\beta(t))-\alpha^\prime(\beta(\tau))|}
{\alpha^\prime(\beta(t))\alpha^\prime(\beta(\tau))}
\quad
(t,\tau\in\mR_+)
\]
that $\log \beta'\in C_b(\mR_+)$ and $\beta'\in SO(\mR_+)$ too.
Thus $\beta$ belongs to $SOS(\mR_+)$.
\end{proof}
\section{Invertibility of binomial functional operators}\label{sec:FO-binomial}
\subsection{The case of the unit interval}
Let $C_b(\mI)$ denote the set of all bounded continuous
functions on $\mI:=(0,1)$. According to \cite{S77}, a function $\varphi\in
C_b(\mI)$ is called \textit{slowly oscillating at} $0$ if
\[
\lim_{r\to 0}\operatorname{osc}(\varphi,[\lambda r,r])=0
\]
for every (equivalently, some) $\lambda\in\mI$. A function $\varphi\in
C_b(\mI)$ is called slowly oscillating at $1$ if the function
$y\mapsto \varphi(1-y)$ slowly oscillates at $0$. Let $SO(\mI)$
denote the set of all functions in $C_b(\mI)$ that slowly
oscillate at $0$ and $1$. In this subsection we assume that $\alpha$
is an orientation-preserving diffeomorphism of $\mI$ onto itself that
has only two fixed points $0$ and $1$.
According to \cite{KKL03}, we say that $\alpha$ is a {\it
slowly oscillating shift} if $\log\alpha'\in C_b(\mI)$ and
$\alpha'\in SO(\mI)$. In the latter case we will write $\alpha\in SOS(\mI)$.
The shift operator $W_\alpha$ on the space $L^p(\mI)$
is defined by $W_\alpha f=f\circ\alpha$. It is easy to see that
$W_\alpha,W_\alpha^{-1}\in\cB(L^p(\mI))$ whenever $\alpha\in SOS(\mI)$.

From Theorem~1.2, Lemma~2.2, and Proposition~5.2 of \cite{KKL03} we extract
the following.
\begin{thm}\label{th:FO-interval}
Suppose $a,b\in SO(\mI)$ and $\alpha\in SOS(\mI)$. The
functional operator $aI-bW_\alpha$ is invertible on the
Lebesgue space $L^p(\mI)$ if and only if either
\begin{equation}\label{eq:FO-interval-1}
\inf_{y\in\mI}|a(y)|>0, \
\liminf_{y\to s}
\left( |a(y)|-|b(y)|\big(\alpha'(y)\big)^{-1/p}\right)>0 \
(s\in\{0,1\});
\end{equation}
or
\begin{equation}\label{eq:FO-interval-2}
\inf_{y\in\mI}|b(y)|>0, \
\limsup_{y\to s}
\left(|a(y)|-|b(y)|\big(\alpha'(y)\big)^{-1/p}\right)<0 \
(s\in\{0,1\}).
\end{equation}
If \eqref{eq:FO-interval-1} holds, then $(aI-bW_\alpha)^{-1}$ is given by \eqref{eq:FO-3}.
If \eqref{eq:FO-interval-2} is fulfilled, then $(aI-bW_\alpha)^{-1}$ is given by \eqref{eq:FO-4}.
\end{thm}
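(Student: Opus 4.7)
Since the excerpt explicitly asserts that this theorem is extracted from Theorem~1.2, Lemma~2.2, and Proposition~5.2 of \cite{KKL03}, my plan is to verify the matching of notation and then invoke those three results. First I would check that our definitions of $SO(\mI)$ and $SOS(\mI)$ coincide with the classes used in \cite{KKL03}: both consist of bounded continuous functions (respectively, orientation-preserving diffeomorphisms with fixed points $0$ and $1$) whose oscillations on $[\lambda r,r]$ and on $[1-r,1-\lambda r]$ tend to zero as $r\to 0$. Under this identification the shift operator $W_\alpha f=f\circ\alpha$ and its inverse $W_\alpha^{-1}=W_\beta$ with $\beta=\alpha^{-1}$ act boundedly on $L^p(\mI)$ in both papers, with $\beta\in SOS(\mI)$ following by the analogue of Lemma~\ref{le:SOS-inverse}.

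With the dictionary in place, the invertibility criterion --- equivalence of invertibility of $aI-bW_\alpha$ on $L^p(\mI)$ with the disjunction of \eqref{eq:FO-interval-1} and \eqref{eq:FO-interval-2} --- is Theorem~1.2 of \cite{KKL03}. The Neumann-series inverse under \eqref{eq:FO-interval-1} comes from Lemma~2.2 of \cite{KKL03}, which yields the norm convergence of $\sum_{n\geq 0}(a^{-1}bW_\alpha)^n$ on $L^p(\mI)$; right-multiplying by $a^{-1}I$ produces formula \eqref{eq:FO-3}. The dual formula \eqref{eq:FO-4} under \eqref{eq:FO-interval-2} is Proposition~5.2 of \cite{KKL03}; alternatively, it follows from the previous case via the factorization $aI-bW_\alpha=-(bI-aW_\alpha^{-1})W_\alpha$ together with the fact that \eqref{eq:FO-interval-2} for $(a,b,\alpha)$ corresponds, via the identity $(\beta'(y))^{-1/p}=(\alpha'(\beta(y)))^{1/p}$, to \eqref{eq:FO-interval-1} for $(b,a,\beta)$.

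I expect the main technical point to be the symmetry verification in this last reduction: one has to use the slow oscillation of $\alpha'$ together with $\beta(y)\to s$ whenever $y\to s\in\{0,1\}$ to show that the $\liminf$ and $\limsup$ conditions on $\alpha$ translate correctly into the corresponding conditions on $\beta$. Once the definitions are aligned and this symmetry is confirmed, the theorem follows immediately from the three cited results of \cite{KKL03}.
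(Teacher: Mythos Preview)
Your proposal is correct and matches the paper's approach exactly: the paper provides no proof for this theorem beyond the single sentence ``From Theorem~1.2, Lemma~2.2, and Proposition~5.2 of \cite{KKL03} we extract the following,'' and your plan is precisely to unpack that citation. Your additional remarks on aligning the definitions and on the symmetry reduction for \eqref{eq:FO-4} are reasonable elaborations but go beyond what the paper itself supplies.
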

\subsection{Transplantation from the half-line to the unit interval}
Let $\eta:[0,1]\to\overline{\mR}_+=[0,+\infty]$ be defined by $\eta(y)=y/(1-y)$.
Then its inverse is given by $\eta^{-1}(t)=t/(1+t)$. Consider the isometric
isomorphism $G:L^p(\mR_+)\to L^p(\mI)$ defined by
\[
(G\varphi)(y):=(1-y)^{-2/p}\varphi[\eta(y)]\quad (y\in\mI).
\]
Its inverse is given by
\[
(G^{-1}f)(t):=(1+t)^{-2/p}f[\eta^{-1}(t)]\quad (t\in\mR_+).
\]
Let $I_\mI$ be the identity operator on $L^p(\mI)$ and
\[
(S_\mI\varphi)(x):=\frac{1}{\pi i}\int_\mI\frac{\varphi(y)}{y-x}dy\quad (x\in\mI).
\]
It is well known that the operator $S_\mI$ is bounded on $L^p(\mI)$.
\begin{lem}\label{le:transplantation}
Suppose $1<p<\infty$.
\begin{enumerate}
\item[{\rm(a)}] We have $GSG^{-1}=w_p^{-1}S_\mI w_pI_\mI$,
where $w_p(y):=(1-y)^{2/p-1}$ for $y\in\mI$.

\item[{\rm(b)}] If $a\in L^\infty(\mR_+)$, then
$G(aI)G^{-1}=(a\circ\eta)I_\mI$.

\item[{\rm (c)}] If $\alpha:\mR_+\to\mR_+$ is a diffeomorphism such that
$\log\alpha'\in L^\infty(\mR_+)$, then
$GW_\alpha G^{-1}=c_{\alpha,p} W_{\widetilde{\alpha}}$,
where
\[
\widetilde{\alpha}:=\eta^{-1}\circ\alpha\circ\eta,
\quad
c_{\alpha,p}(y):=\left(\frac{1-\widetilde{\alpha}(y)}{1-y}\right)^{2/p}
\quad\mbox{for}\quad y\in\mI.
\]
\end{enumerate}
\end{lem}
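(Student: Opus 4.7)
The plan is to establish each of the three parts by a direct change of variables, applying the composite operator to an arbitrary $f\in L^p(\mI)$ and simplifying with a short list of identities for $\eta$. The essential facts I will use repeatedly are: (i) $\eta$ is a $C^\infty$ diffeomorphism of $\mI$ onto $\mR_+$ with $\eta^{-1}\circ\eta=\operatorname{id}$; (ii) $1+\eta(y)=(1-y)^{-1}$, equivalently $1-\eta^{-1}(\tau)=(1+\tau)^{-1}$; (iii) $d\eta(x)=(1-x)^{-2}\,dx$; and (iv) the algebraic identity $\eta(x)-\eta(y)=(x-y)/\bigl((1-x)(1-y)\bigr)$, which is exactly what allows the Cauchy kernel to transform into a Cauchy kernel on $\mI$.

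Parts (b) and (c) are short verifications of this type. For (b), the inner application $(G^{-1}f)(\eta(y))$ reduces via (i)--(ii) to $(1-y)^{2/p}f(y)$; multiplying by $a(\eta(y))$ and by the outer factor $(1-y)^{-2/p}$ from $G$ gives $a(\eta(y))f(y)$, which is $(a\circ\eta)I_\mI f$. For (c), applying $W_\alpha$ before the outer $G$ evaluates $G^{-1}f$ at $\alpha(\eta(y))$; this substitutes $y\mapsto\eta^{-1}(\alpha(\eta(y)))=\widetilde{\alpha}(y)$ in the argument of $f$ and produces the factor $(1+\alpha(\eta(y)))^{-2/p}=(1-\widetilde{\alpha}(y))^{2/p}$ by (ii). Combining this with the factor $(1-y)^{-2/p}$ from the outer $G$ yields precisely $c_{\alpha,p}(y)(W_{\widetilde{\alpha}}f)(y)$.

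Part (a) is where the real work sits. I would expand $(SG^{-1}f)(\eta(y))$ as the principal-value integral
\[
(SG^{-1}f)(\eta(y))=\frac{1}{\pi i}\int_{\mR_+}\frac{(1+\tau)^{-2/p}f(\eta^{-1}(\tau))}{\tau-\eta(y)}\,d\tau,
\]
and perform the substitution $\tau=\eta(x)$, $x\in\mI$. By (iii) the Jacobian contributes $(1-x)^{-2}$; by (iv) the denominator becomes $(1-x)^{-1}(1-y)^{-1}(x-y)$; and $(1+\tau)^{-2/p}$ turns into $(1-x)^{2/p}$. Collecting the powers of $(1-x)$ produces the weight $w_p(x)=(1-x)^{2/p-1}$, while the factor $(1-y)$ survives outside the integral, giving
\[
(SG^{-1}f)(\eta(y))=(1-y)\,(S_\mI w_p f)(y).
\]
Premultiplying by $(1-y)^{-2/p}$ from the outer $G$ produces $(1-y)^{1-2/p}(S_\mI w_p f)(y)=w_p^{-1}(y)(S_\mI w_p f)(y)$, which is the asserted identity.

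The main obstacle is the justification of the change of variables inside a principal-value integral. Since $\eta$ is a $C^\infty$ diffeomorphism whose derivative is positive on each compact subinterval of $\mI$, the map $\tau\mapsto x=\eta^{-1}(\tau)$ sends the symmetric deleted neighbourhood $(\eta(y)-\eps,\eta(y)+\eps)^c$ to a deleted neighbourhood of $y$ whose endpoints differ from $y$ by $(1+o(1))\eps(1-y)^2$; identity (iv) shows that this is compatible with the symmetric truncation defining $S_\mI$ in the limit. To sidestep any delicacy I would first perform the calculation on $f\in C_c^\infty(\mI)$, where all manipulations are literal, and then extend by continuity using the boundedness of $S$ on $L^p(\mR_+)$, of $S_\mI$ and $w_p I_\mI$ on $L^p(\mI)$, the isometry property of $G$, and the density of $C_c^\infty(\mI)$ in $L^p(\mI)$.
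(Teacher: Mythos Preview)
Your proof is correct and is exactly the direct change-of-variables computation the paper has in mind when it omits the proof as ``straightforward''. One minor fix: for $p>2$ the weight $w_p(y)=(1-y)^{2/p-1}$ is unbounded, so you cannot invoke boundedness of $w_pI_\mI$ in the density step; instead, having verified $GSG^{-1}f=w_p^{-1}S_\mI w_pf$ on $C_c^\infty(\mI)$, simply note that the right-hand side agrees on a dense set with the bounded operator $GSG^{-1}$ and hence extends to all of $L^p(\mI)$.
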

The proof of this lemma is straightforward and therefore it is omitted.
\begin{lem}\label{le:transplanted-functions}
Let $1<p<\infty$.
\begin{enumerate}
\item[{\rm(a)}]
If $a\in SO(\mR_+)$, then $a\circ\eta\in SO(\mI)$.

\item[{\rm(b)}]
If $\alpha\in SOS(\mR_+)$, then
$\widetilde{\alpha}\in SOS(\mI)$, $c_{\alpha,p}\in SO(\mI)$, and
\begin{equation}\label{eq:transplanted-functions-1}
0<\inf_{y\in\mI}c_{\alpha,p}(y)\le\sup_{y\in\mI}c_{\alpha,p}(y)<+\infty.
\end{equation}
\end{enumerate}
\end{lem}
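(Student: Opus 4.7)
The plan is to transfer every slow-oscillation question about $\mI$ back to $\mR_+$ via the change of variables $\eta$. Part (a) is a direct interval calculation, and part (b) reduces to part (a) once I rewrite $\widetilde{\alpha}'$ and $c_{\alpha,p}$ as pull-backs by $\eta$ of explicit functions in $SO(\mR_+)$ built from $\alpha'$ and the ratio $\alpha(t)/t$, both of which lie in $SO(\mR_+)$ by Lemma~\ref{le:exp-repr}.

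For part (a) I would fix $\lambda\in(0,1)$ and compute the images of the canonical intervals directly:
\[
\eta([\lambda y,y])=\left[\frac{\lambda y}{1-\lambda y},\,\frac{y}{1-y}\right],\qquad
\eta([1-r,1-\lambda r])=\left[\frac{1}{r}-1,\,\frac{1}{\lambda r}-1\right].
\]
Setting $r=\eta(y)$, the first interval takes the form $[\mu(r)r,r]$ with $\mu(r)=\lambda/(1+(1-\lambda)r)\to\lambda$ as $r\to 0^+$; the second has endpoints tending to $\infty$ with ratio tending to $\lambda$. Since $\operatorname{osc}(a\circ\eta,J)=\operatorname{osc}(a,\eta(J))$ for any subinterval $J$, the slow oscillation of $a$ at $0$ and $\infty$ translates directly into slow oscillation of $a\circ\eta$ at $0$ and at $1$.

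For part (b), $\widetilde{\alpha}=\eta^{-1}\circ\alpha\circ\eta$ is automatically an orientation-preserving diffeomorphism of $\mI$ whose only fixed points are $0$ and $1$, since $\eta$ is an increasing homeomorphism $[0,1]\to\overline{\mR}_+$ with $\eta(0)=0$ and $\eta(1)=\infty$. The key algebraic identity is $1-\widetilde{\alpha}(y)=(1+\alpha(\eta(y)))^{-1}$, which combined with $1-y=(1+\eta(y))^{-1}$ yields
\[
\frac{1-\widetilde{\alpha}(y)}{1-y}=(h\circ\eta)(y),\qquad h(t):=\frac{1+t}{1+\alpha(t)},
\]
and the chain rule then gives $\widetilde{\alpha}'(y)=(\alpha'\circ\eta)(y)\cdot c_{\alpha,p}(y)^p=\bigl((\alpha'\cdot h^2)\circ\eta\bigr)(y)$. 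The two-sided bounds $m_\alpha t\le\alpha(t)\le M_\alpha t$ from \eqref{eq:exp-repr-4} show that $h$ lies between positive constants on $\mR_+$, which at once yields \eqref{eq:transplanted-functions-1}, $\log c_{\alpha,p}\in C_b(\mI)$ and, together with $\log\alpha'\in C_b(\mR_+)$, $\log\widetilde{\alpha}'\in C_b(\mI)$.

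The one place I expect real work, and the main obstacle, is verifying $h\in SO(\mR_+)$; once this is granted, part (a) applied to $h^{2/p}$ and to $\alpha'\cdot h^2$ (both in $SO(\mR_+)$) gives $c_{\alpha,p}\in SO(\mI)$ and $\widetilde{\alpha}'\in SO(\mI)$, completing the proof that $\widetilde{\alpha}\in SOS(\mI)$. Slow oscillation of $h$ at $0$ is immediate because $h$ extends continuously to $0$ with value $1$. At $\infty$ I would expand
\[
h(t)-h(\tau)=\frac{(t-\tau)+(\alpha(\tau)-\alpha(t))+t\tau\bigl(g(\tau)-g(t)\bigr)}{(1+\alpha(t))(1+\alpha(\tau))},\qquad g(t):=\frac{\alpha(t)}{t}=e^{\omega(t)},
\]
for $t,\tau\in[\lambda r,r]$, bound the denominator below by $(m_\alpha\lambda r)^2$ via \eqref{eq:exp-repr-4}, the first two terms of the numerator by $O(r)$ via \eqref{eq:exp-repr-3} and the mean value theorem, and the last by $r^2\operatorname{osc}(g,[\lambda r,r])$. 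Dividing yields $\operatorname{osc}(h,[\lambda r,r])=O(1/r)+O(\operatorname{osc}(g,[\lambda r,r]))$, which tends to $0$ as $r\to\infty$ because $g=e^\omega\in SO(\mR_+)$ by Lemma~\ref{le:exp-repr}.
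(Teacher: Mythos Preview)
Your proof is correct and follows the same architecture as the paper: part~(a) by direct interval comparison, and part~(b) by writing $c=(1-\widetilde{\alpha})/(1-y)$ and $\widetilde{\alpha}'$ as $\eta$-pullbacks of functions built from $\alpha'$ and $\alpha(t)/t$, then invoking part~(a). Your function $h(t)=(1+t)/(1+\alpha(t))$ is exactly the paper's $c\circ\eta^{-1}$, and both proofs handle slow oscillation at $0$ by continuity with $h(0)=1$. The one substantive difference is at $t\to\infty$: the paper factors
\[
h(t)=e^{-\omega(t)}\cdot\frac{1+t^{-1}}{1+t^{-1}e^{-\omega(t)}}
\]
and observes that the second factor tends to $1$, so $h$ inherits slow oscillation at $\infty$ directly from $e^{-\omega}\in SO(\mR_+)$; you instead expand $h(t)-h(\tau)$ explicitly and bound the three numerator terms. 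Your route is more computational but perfectly valid; the paper's factorization is shorter and avoids the $O(1/r)$ bookkeeping. One small point in part~(a): your ratio $\mu(r)\to\lambda$ argument is fine, but to conclude you should note (as the paper does via explicit containment in $[(\lambda/2)R,R]$) that eventually $\mu(r)>\lambda'$ for some fixed $\lambda'\in(0,1)$, so that the image intervals sit inside intervals of the canonical form $[\lambda' R,R]$.
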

\begin{proof}
(a) Let $\psi(y):=1-y$ for $y\in\mI$.
If $y\in(0,1/2]$, then $y\le\eta(y)\le 2y$ and $1/(2y)\le\eta(\psi(y))\le 1/y$.
Hence for $\lambda\in(0,1)$ and $r\in(0,1/2]$,
\[
\begin{split}
&
[\eta(\lambda r),\eta(r)]\subset [\lambda r,2r]=[(2^{-1}\lambda)2r,2r],
\\
&
[(\eta\circ\psi)(r),(\eta\circ\psi)(\lambda r)]\subset[(2r)^{-1},(\lambda r)^{-1}]
=
[2^{-1}\lambda(\lambda r)^{-1},(\lambda r)^{-1}].
\end{split}
\]
Therefore,
\begin{align}
\operatorname{osc}(a\circ\eta,[\lambda r,r])
&=
\operatorname{osc}(a,[\eta(\lambda r),\eta(r)])
\le
\operatorname{osc}(a,[(2^{-1}\lambda)2r,2r]),
\label{eq:transplanted-functions-2}
\\
\operatorname{osc}(a\circ\eta\circ\psi,[\lambda r,r])
&=
\operatorname{osc}(a,[(\eta\circ\psi)(r),(\eta\circ\psi)(\lambda r)])
\nonumber
\\
&\le
\operatorname{osc}(a,[2^{-1}\lambda(\lambda r)^{-1},(\lambda r)^{-1}]).
\label{eq:transplanted-functions-3}
\end{align}
Since $a\in SO(\mR_+)$, we get
\[
\lim_{r\to 0}
\operatorname{osc}(a,[(2^{-1}\lambda)2r,2r])
=
\lim_{r\to 0}
\operatorname{osc}(a,[2^{-1}\lambda(\lambda r)^{-1},(\lambda r)^{-1}])
=0.
\]
These equalities and inequalities \eqref{eq:transplanted-functions-2}--\eqref{eq:transplanted-functions-3}
imply that $a\circ\eta$ and $a\circ\eta\circ\psi$ slowly oscillate at zero.
Thus $a\circ\eta\in SO(\mI)$. Part (a) is proved.

(b) Let $y\in\mI$. Since $1-\widetilde{\alpha}(y)=1/[1+(\alpha\circ\eta)(y)]$, we have
\begin{equation}\label{eq:transplanted-functions-4}
\widetilde{\alpha}'(y)
=
\frac{(\alpha'\circ\eta)(y)}{(1-y)^2[1+(\alpha\circ\eta)(y)]^2}
=
(\alpha'\circ\eta)(y)c^2(y),
\end{equation}
where
\[
c(y):=\frac{1-\widetilde{\alpha}(y)}{1-y}>0.
\]
By Lemma~\ref{le:exp-repr}, $\alpha(t)=te^{\omega(t)}$ with $\omega\in SO(\mR_+)$.
Hence, for $t\in\mR_+$,
\[
(c\circ\eta^{-1})(t)
=
\frac{1+t}{1+te^{\omega(t)}}=
e^{-\omega(t)}\frac{1+t^{-1}}{1+t^{-1}e^{-\omega(t)}}.
\]
Since $SO(\mR_+)$ is a $C^*$-algebra, $e^{-\omega}\in SO(\mR_+)\subset C_b(\mR_+)$.
Therefore
\[
\lim_{t\to+\infty}
\frac{1+t^{-1}}{1+t^{-1}e^{-\omega(t)}}=1.
\]
This implies that the function $c\circ\eta^{-1}$ slowly oscillates at $+\infty$.
On the other hand, $e^\omega\in SO(\mR_+)\subset C_b(\mR_+)$. Then
\[
\lim_{t\to 0}(c\circ\eta^{-1})(t)=\lim_{t\to 0}\frac{1+t}{1+te^{\omega(t)}}=1.
\]
In particular, this implies that $c\circ\eta^{-1}$ slowly oscillates at zero.
Thus $c\circ\eta^{-1}$ belongs to $SO(\mR_+)$. By part (a) of this lemma, $c\in SO(\mI)$.
Similarly it can be shown that $1/c\in SO(\mI)$.

Since $SO(\mI)$ is a $C^*$-algebra, we conclude that $c_{\alpha,p}=c^{2/p}\in SO(\mI)$
and $c^2\in SO(\mI)$. By definition of a slowly oscillating shift, $\alpha'\in SO(\mR_+)$.
Then from part (a) we deduce that $\alpha'\circ\eta\in SO(\mI)\subset C_b(\mI)$.
Combining this observation with $c^2\in SO(\mI)$ and \eqref{eq:transplanted-functions-4},
we conclude that $\widetilde{\alpha}'\in SO(\mI)$.

We have already known that $c,1/c\in SO(\mI)\subset C_b(\mI)$. Hence \eqref{eq:transplanted-functions-1}
holds and $\log(c^2)\in C_b(\mI)$. On the other hand, $\alpha\in SOS(\mR_+)$.
Then $\log\alpha'\in C_b(\mR_+)$ and thus $\log(\alpha'\circ\eta)\in C_b(\mI)$.
Taking into account \eqref{eq:transplanted-functions-4}, we get
\[
\log(\widetilde{\alpha}')=\log(\alpha'\circ\eta)+\log(c^2)\in C_b(\mI),
\]
which concludes the proof of $\widetilde{\alpha}\in SOS(\mI)$.
\end{proof}
\subsection{Proof of Theorem~\ref{th:FO}}\label{sec:proof-FO}
From Lemma~\ref{le:transplantation}(b)-(c) it follows that
\begin{equation}\label{eq:proof-FO-1}
G(aI-bW_\alpha)G^{-1}=(a\circ\eta)I_\mI-(b\circ\eta)c_{\alpha,p}W_{\widetilde{\alpha}}.
\end{equation}
From Lemma~\ref{le:transplanted-functions} we know that $a\circ\eta\in SO(\mI)$,
$(b\circ\eta)c_{\alpha,p}\in SO(\mI)$, $\widetilde{\alpha}\in SOS(\mI)$, and
\[
0<C_1:=\inf_{y\in\mI}c_{\alpha,p}(y)\le\sup_{y\in\mI}c_{\alpha,p}(y)=:C_2<+\infty.
\]
It is easy to see that
\begin{equation}\label{eq:proof-FO-2}
\inf_{y\in\mI}|(a\circ\eta)(y)|=\inf_{t\in\mR_+}|a(t)|,
\quad
\inf_{y\in\mI}|(b\circ\eta)(y)|=\inf_{t\in\mR_+}|b(t)|.
\end{equation}
Hence
\begin{equation}\label{eq:proof-FO-3}
C_1\inf_{t\in\mR_+}|b(t)|
\le\inf_{y\in\mI}|(b\circ\eta)(y)c_{\alpha,p}(y)|\le
C_2\inf_{t\in\mR_+}|b(t)|.
\end{equation}
Further, it can be checked straightforwardly that for every $y\in\mI$,
\[
\begin{split}
&
|(a\circ\eta)(y)|-|(b\circ\eta)(y)c_{\alpha,p}(y)|\big(\widetilde{\alpha}'(y)\big)^{-1/p}
\\
&=
|(a\circ\eta)(y)|-|(b\circ\eta)(y)|\big((\alpha'\circ\eta)(y)\big)^{-1/p}.
\end{split}
\]
Therefore
\begin{align}
\limsup_{y\to 0}\Big/\liminf_{y\to 0}
&\Big(|(a\circ\eta)(y)|-|(b\circ\eta)(y)c_{\alpha,p}(y)|\big(\widetilde{\alpha}'(y)\big)^{-1/p}\Big)
\nonumber
\\
&=
\limsup_{t\to 0}\Big/\liminf_{t\to 0}
\Big(|a(t)|-|b(t)|\big(\alpha'(t)\big)^{-1/p}\Big),
\label{eq:proof-FO-4}
\\
\limsup_{y\to 1}\Big/\liminf_{y\to 1}
&\Big(|(a\circ\eta)(y)|-|(b\circ\eta)(y)c_{\alpha,p}(y)|\big(\widetilde{\alpha}'(y)\big)^{-1/p}\Big)
\nonumber
\\
&=
\limsup_{t\to\infty}\Big/\liminf_{t\to\infty}
\Big(|a(t)|-|b(t)|\big(\alpha'(t)\big)^{-1/p}\Big),
\label{eq:proof-FO-5}
\end{align}
respectively. Equality \eqref{eq:proof-FO-1} says that $aI-bW_\alpha$
is invertible on $L^p(\mR_+)$ if and only if
$(a\circ\eta)I_\mI-(b\circ\eta)c_{\alpha,p}W_{\widetilde{\alpha}}$
is invertible on $L^p(\mI)$. On the other hand, equality
\eqref{eq:proof-FO-2}, inequalities \eqref{eq:proof-FO-3}, and equalities
\eqref{eq:proof-FO-4}--\eqref{eq:proof-FO-5} imply that the conditions
of Theorems~\ref{th:FO} and~\ref{th:FO-interval} are equivalent.

Further, if \eqref{eq:FO-1} holds, then \eqref{eq:FO-interval-1} is
fulfilled. Then, by Theorem~\ref{th:FO-interval},
\begin{equation}\label{eq:proof-FO-6}
\big((a\circ\eta)I_\mI-(b\circ\eta)c_{\alpha,p}W_{\widetilde{\alpha}}\big)^{-1}
=
\sum_{n=0}^\infty
\left(\frac{b\circ\eta}{a\circ\eta}c_{\alpha,p}W_{\widetilde{\alpha}}\right)^n
\frac{1}{a\circ\eta}I_\mI.
\end{equation}
Applying Lemma~\ref{le:transplantation}(b)--(c), we obtain
\begin{equation}\label{eq:proof-FO-7}
\sum_{n=0}^\infty
\left(\frac{b\circ\eta}{a\circ\eta}c_{\alpha,p}W_{\widetilde{\alpha}}\right)^n
\frac{1}{a\circ\eta}I_\mI=G\left(\sum_{n=0}^\infty (a^{-1}bW_\alpha)^na^{-1}I\right)G^{-1}.
\end{equation}
Combining \eqref{eq:proof-FO-1}, \eqref{eq:proof-FO-6}, and \eqref{eq:proof-FO-7}, we get
\eqref{eq:FO-3}.

Analogously it can be shown that if \eqref{eq:FO-2} is fulfilled, then
$(aI-bW_\alpha)^{-1}$ is calculated by \eqref{eq:FO-4}.
\qed
\section{Convolution operators}\label{sec:Mellin-convolution}
\subsection{Fourier convolution operators}
Let $F:L^2(\mR)\to L^2(\mR)$ denote the Fourier transform,
\[
(Ff)(x):=\int_\mR f(y)e^{-ixy}dy\quad (x\in\mR),
\]
and let $F^{-1}:L^2(\mR)\to L^2(\mR)$ be the inverse of $F$. A function
$a\in L^\infty(\mR)$ is called a Fourier multiplier on $L^p(\mR)$
if the mapping
$f\mapsto F^{-1}aFf$ maps $L^2(\mR)\cap L^p(\mR)$ onto itself and extends
to a bounded operator on $L^p(\mR)$. The latter operator is then denoted by
$W^0(a)$. We let $M_p(\mR)$ stand for the set of all Fourier multipliers on
$L^p(\mR)$. One can show that $M_p(\mR)$ is a Banach algebra under the norm
\[
\|a\|_{M_p(\mR)}:=\|W^0(a)\|_{\cB(L^p(\mR))}.
\]
We denote by $PC$ the $C^*$-algebra of all bounded piecewise continuous
functions on $\dot{\mR}=\mR\cup\{\infty\}$. By definition, $a\in PC$ if
and only if $a\in L^\infty(\mR)$ and the one-sided limits
\[
a(x_0-0):=\lim_{x\to x_0-0}a(x),
\quad
a(x_0+0):=\lim_{x\to x_0+0}a(x)
\]
exist for each $x_0\in\dot{\mR}$. If a function $a$ is given everywhere
on $\mR$, then its total variation is defined by
\[
V(a):=\sup\sum_{k=1}^n|a(x_k)-a(x_{k-1})|,
\]
where the supremum is taken over all $n\in\mN$ and
\[
-\infty<x_0<x_1<\dots<x_n<+\infty.
\]
If $a$ has a finite total variation, then it has finite one-sided limits
$a(x-0)$ and $a(x+0)$ for all $x\in\dot{\mR}$, that is, $a\in PC$
(see, e.g., \cite[Chap. VIII, Sections~3 and~9]{N55}). The following
theorem gives an important subset of $M_p(\mR)$. Its proof can be found,
e.g., in \cite[Theorem~17.1]{BKS02}.
\begin{thm}[Stechkin's inequality]
\label{th:Stechkin}
If $a\in PC$ has finite total variation $V(a)$, then $a\in M_p(\mR)$ and
\[
\|a\|_{M_p(\mR)}\le\|S_\mR\|_{\cB(L^p(\mR))}\big(\|a\|_{L^\infty(\mR)}+V(a)\big),
\]
where $S_\mR$ is the Cauchy singular integral operator on $\mR$.
\end{thm}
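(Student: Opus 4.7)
My plan is to reduce the estimate to the single elementary multiplier $\mathrm{sgn}$, for which $\|\mathrm{sgn}\|_{M_p(\mR)}=\|S_\mR\|_{\cB(L^p(\mR))}$ (the Fourier symbol of $S_\mR$ is $\mathrm{sgn}$, up to a sign convention that does not affect norms). For any $t\in\mR$, conjugation by the modulation $f\mapsto e^{itx}f$---which is an isometric isomorphism of $L^p(\mR)$---transforms $W^0(\mathrm{sgn})$ into $W^0(\mathrm{sgn}(\cdot-t))$, so
\[
\|\mathrm{sgn}(\cdot-t)\|_{M_p(\mR)}=\|S_\mR\|_{\cB(L^p(\mR))}\quad\text{for every }t\in\mR.
\]

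Next I would use that a piecewise continuous function $a$ with $V(a)<\infty$ induces a finite signed Borel measure $da$ on $\dot{\mR}$ of total variation at most $V(a)$, together with the BV integration-by-parts identity
\[
a(x)=c_0+\frac{1}{2}\int_\mR\mathrm{sgn}(x-t)\,da(t),\qquad c_0:=\frac{a(-\infty)+a(+\infty)}{2},
\]
valid at every continuity point of $a$. Interpreting this as an identity between Fourier multipliers and combining it with Minkowski's inequality at the operator level gives
\[
\|a\|_{M_p(\mR)}\le|c_0|+\frac{V(a)}{2}\,\|S_\mR\|_{\cB(L^p(\mR))}.
\]
Since $|c_0|\le\|a\|_{L^\infty(\mR)}$ and $\|S_\mR\|_{\cB(L^p(\mR))}\ge 1$ for every $1<p<\infty$, the claimed Stechkin bound follows.

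The principal technical point will be making the operator-valued integral representation rigorous, i.e.\ justifying the interchange of the Fourier multiplier action with Lebesgue--Stieltjes integration against $da$. I would handle this by first proving the inequality for step functions $a$---finite linear combinations of indicators of half-lines---where the decomposition reduces to a finite sum and Minkowski becomes the triangle inequality, yielding the Stechkin bound with a constant independent of the number of steps. For a general piecewise continuous $a$ with $V(a)<\infty$, I would then approximate $a$ by step functions $(a_n)$ with $\|a-a_n\|_{L^\infty(\mR)}\to 0$ and $\limsup V(a_n)\le V(a)$, and pass to the limit by testing the associated multiplier operators on a dense subspace such as the Schwartz class, using the uniform bound already established for the approximants.
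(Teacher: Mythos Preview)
The paper does not supply its own proof of this theorem; it is quoted as a known result with a reference to \cite[Theorem~17.1]{BKS02}. Your sketch is correct and in fact reproduces the classical argument found there: write a bounded variation function as a constant plus a Lebesgue--Stieltjes superposition of translated $\mathrm{sgn}$ functions, identify $\mathrm{sgn}$ (up to sign) as the Fourier symbol of $S_\mR$, use modulation-conjugation to see that all translates of $\mathrm{sgn}$ have the same $M_p$-norm, and conclude by Minkowski. The intermediate bound you obtain,
\[
\|a\|_{M_p(\mR)}\le |c_0|+\tfrac{1}{2}\,V(a)\,\|S_\mR\|_{\cB(L^p(\mR))},
\]
is indeed sharper than the stated inequality, and your reduction to the displayed estimate via $\|S_\mR\|_{\cB(L^p(\mR))}\ge 1$ (which follows from $S_\mR^2=I$) is correct. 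The step-function approximation you propose is the standard way to make the operator-valued integral rigorous; for the limiting step it suffices to observe that $a_n\to a$ in $L^\infty(\mR)$ implies $W^0(a_n)f\to W^0(a)f$ in $L^2(\mR)$ for Schwartz $f$, so Fatou's lemma transfers the uniform $L^p$ bound.
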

According to \cite[p.~325]{BKS02}, let $PC_p$ be the closure in $M_p(\mR)$
of the set of all functions $a\in PC$ with finite total variation on $\mR$.
Following \cite[p.~331]{BKS02}, put $C_p(\overline{\mR}):=PC_p\cap C(\mR)$,
where $\overline{\mR}:=[-\infty,+\infty]$.
\subsection{Mellin convolution operators}
Let $d\mu(t)=dt/t$ be the (normalized) invariant measure on $\mR_+$.
Consider the Fourier transform on $L^2(\mathbb{R}_+,d\mu)$, which is
usually referred to as the Mellin transform and is defined by
\[
M:L^2(\mR_+,d\mu)\to L^2(\mR),
\quad
(Mf)(x)=\int_{\mR_+} f(t) t^{-ix}\,\frac{dt}{t}.
\]
It is an invertible operator, with inverse given by
\[
{M^{-1}}:L^2(\mR)\to L^2(\mR_{+},d\mu),
\quad
({M^{-1}}g)(t)= \frac{1}{2\pi}\int_{\mR}
g(x)t^{ix}\,dx.
\]
Let $E$ be the isometric isomorphism
\begin{equation}\label{eq:def-E}
E:L^p(\mR_+,d\mu)\to L^p(\mR),
\quad
(Ef)(x):=f(e^x)\quad (x\in\mR).
\end{equation}
Then the map $A\mapsto E^{-1}AE$ transforms the Fourier convolution
operator $W^0(a)=F^{-1}aF$ to the Mellin convolution operator
\[
\operatorname{Co}(a):=M^{-1}aM
\]
with the same symbol $a$. Hence the class of Fourier multipliers on
$L^p(\mR)$ coincides with the class of Mellin multipliers on $L^p(\mR_+,d\mu)$.

The following result was obtained in \cite[Proposition~1.6]{D87},
its proof can also be found in \cite[Proposition~12.7]{RS90}.
\begin{thm}[Duduchava]
\label{th:Duduchava}
If $a\in C_b(\mR_+)$ and $b\in PC_p$ are such that
\[
\lim_{t\to 0+0}a(t)=\lim_{t\to+\infty}a(t)=\lim_{x\to-\infty}b(x)=\lim_{x\to +\infty}b(x)=0,
\]
then $a\operatorname{Co}(b)\in\cK(L^p(\mR_+,d\mu))$.
\end{thm}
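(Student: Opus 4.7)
The plan is to transfer the problem from $L^p(\mR_+,d\mu)$ to $L^p(\mR)$ via the isometry $E$ defined in \eqref{eq:def-E}. Setting $\widetilde{a}(x):=a(e^x)$, we have $E(aI)E^{-1}=\widetilde{a}I$ and $E\operatorname{Co}(b)E^{-1}=W^0(b)$, so $E(a\operatorname{Co}(b))E^{-1}=\widetilde{a}W^0(b)$. Under the hypotheses on $a$, the function $\widetilde{a}$ lies in $C_0(\mR)$. Since $E$ is an isomorphism, it suffices to show that $\widetilde{a}W^0(b)\in\cK(L^p(\mR))$ whenever $\widetilde{a}\in C_0(\mR)$ and $b\in PC_p$ with $b(\pm\infty)=0$.

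Next I would carry out a double approximation in operator norm. On the coefficient side, functions in $C_c^\infty(\mR)$ are dense in $C_0(\mR)$ in the sup norm, and for multiplication operators on $L^p(\mR)$ the operator norm is dominated by the sup norm, so $\widetilde{a}$ can be approximated by $\widetilde{a}_n\in C_c^\infty(\mR)$ with $\|(\widetilde{a}-\widetilde{a}_n)W^0(b)\|_{\cB(L^p(\mR))}\to 0$. On the symbol side, by the definition of $PC_p$ given just after Theorem~\ref{th:Stechkin}, $b$ is a limit in $\|\cdot\|_{M_p(\mR)}$ of piecewise continuous functions of finite total variation. After a smooth cutoff of the argument $x$ on $[-R,R]$, followed by mollification, one obtains approximants $b_n\in C_c^\infty(\mR)$ with $b_n\to b$ in $M_p(\mR)$; since $b$ vanishes at $\pm\infty$, the cutoff step costs only $o(1)$ in uniform plus total-variation norm, which by Stechkin's inequality (Theorem~\ref{th:Stechkin}) controls the $M_p$ norm of the error. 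Consequently $\widetilde{a}W^0(b)$ is the norm-limit of the operators $\widetilde{a}_n W^0(b_n)$.

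It then suffices to prove compactness of $\widetilde{a}_n W^0(b_n)$ for $\widetilde{a}_n,b_n\in C_c^\infty(\mR)$. For such symbols the inverse Fourier transform $k_n:=F^{-1}b_n$ is a Schwartz function, so $W^0(b_n)$ is convolution by $k_n\in L^1(\mR)\cap L^\infty(\mR)$ and $\widetilde{a}_n W^0(b_n)$ acts by the integral kernel
\[
K_n(x,y)=\widetilde{a}_n(x)\,k_n(x-y),
\]
which is continuous, bounded, and supported in $[\text{supp}\,\widetilde{a}_n]\times\mR$ with rapid decay in $y$. A standard Fréchet–Kolmogorov verification, or direct approximation of $K_n$ in $L^{p'}_y(L^p_x)$ by finite-rank kernels, shows that the operator is compact on $L^p(\mR)$. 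Pulling everything back through $E$ yields $a\operatorname{Co}(b)\in\cK(L^p(\mR_+,d\mu))$.

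The main obstacle I expect is the second approximation step: producing $b_n\in C_c^\infty(\mR)$ with $b_n\to b$ in the Banach-algebra norm $\|\cdot\|_{M_p(\mR)}$ and not merely in $\|\cdot\|_{L^\infty(\mR)}$. The vanishing of $b$ at $\pm\infty$ is essential here, because it allows a smooth truncation to be absorbed with a small total-variation error, so that Stechkin's inequality produces a small multiplier norm; without this vanishing hypothesis one would generally lose control of $V(b-b_n)$ and the argument would fail.
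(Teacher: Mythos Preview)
The paper does not supply its own proof of this theorem: it is quoted as a known result of Duduchava, with references to \cite[Proposition~1.6]{D87} and \cite[Proposition~12.7]{RS90}. There is therefore nothing in the paper to compare your argument against directly.

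Your approach is the standard one and is essentially correct. The transfer to $L^p(\mR)$ via $E$ and the identification $E\,a\operatorname{Co}(b)\,E^{-1}=\widetilde{a}\,W^0(b)$ with $\widetilde{a}\in C_0(\mR)$ are exactly right, and the double approximation followed by a compactness check for smooth compactly supported data is the natural route. The one place that genuinely requires care is the point you flag yourself: approximating $b$ in the $M_p$-norm by $C_c^\infty$ functions. Your sketch can be made precise as follows. First pick $c\in PC$ of finite total variation with $\|b-c\|_{M_p}$ small; since $\|\cdot\|_{L^\infty}\le\|\cdot\|_{M_p}$, this forces $c$ to be uniformly small outside a large interval because $b(\pm\infty)=0$. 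With $c$ now fixed, choose $R$ so large that both $\sup_{|x|>R}|c(x)|$ and $\int_{|x|>R}|c'(x)|\,dx$ are small, and take a smooth cutoff $\chi$ equal to $1$ on $[-R,R]$, supported in $[-2R,2R]$, with $\int_\mR|\chi'|\,dx$ bounded independently of $R$ (e.g.\ $\le 2$). Then $\|(1-\chi)c\|_V$ is small, so by Stechkin $\|(1-\chi)c\|_{M_p}$ is small. Finally, $\chi c$ is compactly supported with $(\chi c)'\in L^1(\mR)$, so mollification converges in the $V$-norm and hence, again by Stechkin, in $M_p$. This fills the only nontrivial gap; the compactness of the resulting smooth-kernel operator is routine.
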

\subsection{The algebra $\cA$}
Let $\mathfrak{A}$ be a Banach algebra and $\mathfrak{S}$ be a subset of
$\mathfrak{A}$. By $\clos_\mathfrak{A}\mathfrak{S}$ we denote
the closure of $\mathfrak{S}$ in the norm of $\mathfrak{A}$. Following
\cite[Section~3.45]{BS06}, we denote by $\alg_\mathfrak{A}\mathfrak{S}$
the smallest closed subalgebra of $\mathfrak{A}$ containing $\mathfrak{S}$
and by $\operatorname{id}_\mathfrak{A}\mathfrak{S}$ the smallest closed
two-sided ideal of $\mathfrak{A}$ containing $\mathfrak{S}$.

Let $1<p<\infty$. Put
\[
\cB:=\cB(L^p(\mR_+)),\quad
\cK:=\cK(L^p(\mR_+)),\quad
\cA:=\alg_\cB\{I,S\}.
\]
Obviously, the algebra $\cA$ is commutative.
For $\beta\in\mC$, let
\[
(R_\beta f)(t):=\frac{1}{\pi i}\int_0^\infty\frac{f(\tau)}{\tau-e^{i\beta}t}\quad (t\in\mR_+)
\]
and write $R$ for $R_\pi$. Further, put
\[
s_p(x):=\coth[\pi(x+i/p)],
\quad
r_{p,\beta}(x):=\frac{e^{(x+i/p)(\pi-\beta)}}{\sinh[\pi(x+i/p)]}
\quad(x\in\mR)
\]
and write $r_p$ for $r_{p,\pi}$. Consider the isometric isomorphism
\begin{equation}\label{eq:def-Phi}
\Phi:L^p(\mR_+)\to L^p(\mR_+,d\mu),
\quad
(\Phi f)(t):=t^{1/p}f(t)\quad(t\in\mR_+).
\end{equation}
The following facts are well known. Their proofs can be found, e.g.,
in \cite[Propositions~2.1--2.5]{RS90} (see also \cite{D87}).
\begin{thm}\label{th:algebra-A}
Suppose $1<p<\infty$.
\begin{enumerate}
\item[{\rm(a)}]
The algebra $\cA$ is the smallest closed subalgebra of $\cB$ that contains
the operators $\Phi^{-1}\operatorname{Co}(a)\Phi$ with $a\in C_p(\overline{\mR})$.

\item[{\rm(b)}]
If $\beta\in\mC$ and $\operatorname{Re}\beta\in(0,2\pi)$, then
$s_p,r_{p,\beta}\in C_p(\overline{\mR})$ and
\[
\Phi S\Phi^{-1}=\operatorname{Co}(s_p),
\quad
\Phi R_\beta\Phi^{-1}=\operatorname{Co}(r_{p,\beta}).
\]

\item[{\rm(c)}]
The maximal ideal space of the commutative Banach algebra $\cA$ is homeomorphic
to $\overline{\mR}$. In particular, an operator $\Phi^{-1}\operatorname{Co}(a)\Phi$
with $a\in C_p(\overline{\mR})$ is invertible if and only if $a(x)\ne 0$ for all
$x\in\overline{\mR}$. Thus $\cA$ is an inverse closed subalgebra of $\cB$.

\item[{\rm(d)}]
An operator $\Phi^{-1}\operatorname{Co}(a)\Phi$ with $a\in C_p(\overline{\mR})$
belongs to $\operatorname{id}_\cA\{R\}$ if and only if $a(-\infty)=a(+\infty)=0$.
\end{enumerate}
\end{thm}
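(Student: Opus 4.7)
The strategy is to conjugate everything by $\Phi$: then $S$ and $R_\beta$ become Mellin convolution operators, and the problem reduces to proving that $\Phi\cA\Phi^{-1}$ is exactly $\operatorname{Co}(C_p(\overline{\mR}))$. Once that identification is in place, parts (c) and (d) are consequences of the Gelfand theory of the commutative unital Banach algebra $C_p(\overline{\mR})$. Thus the substantive work lies in (b) together with a density step needed for (a).

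\textbf{Part (b).} First I would compute the Mellin symbol of $\Phi S\Phi^{-1}$. From the definition of $S$ and $(\Phi^{-1}f)(\tau)=\tau^{-1/p}f(\tau)$, the substitution $s=t/\tau$ gives
\[
(\Phi S\Phi^{-1}f)(t)=\frac{1}{\pi i}\int_{\mR_+}\frac{(t/\tau)^{1/p}}{1-t/\tau}\,f(\tau)\,\frac{d\tau}{\tau},
\]
exhibiting $\Phi S\Phi^{-1}$ as a Mellin convolution with kernel $k(s)=(\pi i)^{-1}s^{1/p}(1-s)^{-1}$. Its Mellin transform is evaluated by the classical p.v.\ identity $\int_0^\infty u^{\lambda-1}(1-u)^{-1}du=\pi\cot(\pi\lambda)$ for $0<\operatorname{Re}\lambda<1$; with $\lambda=1/p-ix$ and the identity $\cot(z)=i\coth(iz)$ this yields precisely $\coth[\pi(x+i/p)]=s_p(x)$. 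An analogous computation for $R_\beta$ leads to the kernel $(\pi i)^{-1}s^{1/p}(1-e^{i\beta}s)^{-1}$; since $\operatorname{Re}\beta\in(0,2\pi)$ keeps the pole $s=e^{-i\beta}$ off the positive ray, a rotation of the contour (picking up no residues) reduces its Mellin transform to the previous computation and produces $r_{p,\beta}(x)=e^{(x+i/p)(\pi-\beta)}/\sinh[\pi(x+i/p)]$. Both symbols are smooth on $\mR$ with finite limits at $\pm\infty$ and integrable derivatives, hence of finite total variation; by Stechkin's inequality (Theorem~\ref{th:Stechkin}) they belong to $PC_p\cap C(\mR)=C_p(\overline{\mR})$.

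\textbf{Parts (a), (c), (d) and the main obstacle.} Set $\cJ(a):=\Phi^{-1}\operatorname{Co}(a)\Phi$; this is an isometric unital Banach algebra homomorphism from $M_p(\mR)$ into $\cB$. The inclusion $\cA\subset\cJ(C_p(\overline{\mR}))$ is immediate from (b): $I=\cJ(1)$, $S=\cJ(s_p)$, and $\cJ(C_p(\overline{\mR}))$ is a closed subalgebra of $\cB$. The reverse inclusion reduces to showing that the closed subalgebra of $C_p(\overline{\mR})$ generated by $1$ and $s_p$ is $C_p(\overline{\mR})$ itself. Since $s_p$ is injective on $\overline{\mR}$ with $s_p(\pm\infty)=\pm 1$, polynomials in $s_p$ separate the points of $\overline{\mR}$; but the density must be established in the $M_p$-norm, not merely in the sup-norm. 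This is the main obstacle: a bare Stone--Weierstrass argument is insufficient, and one must approximate arbitrary $a\in C_p(\overline{\mR})$ by polynomials in $s_p$ whose total variations are uniformly controlled, then invoke Stechkin to upgrade sup-norm convergence to $M_p$-convergence. Granted this step, $\cJ:C_p(\overline{\mR})\to\cA$ is an isometric isomorphism of commutative Banach algebras, proving (a). Part (c) then follows because $M(C_p(\overline{\mR}))=\overline{\mR}$ via point evaluations (elements of $C_p(\overline{\mR})$ extend continuously to $\overline{\mR}$); invertibility in $\cA$ reduces to non-vanishing of the Gelfand transform, and inverse closedness is automatic. For (d), $\cJ$ carries $\operatorname{id}_\cA\{R\}$ onto $\operatorname{id}_{C_p(\overline{\mR})}\{r_p\}$; since $r_p$ is non-vanishing on $\mR$ (as $\sinh[\pi(x+i/p)]$ has zeros only at $x+i/p\in i\mZ$, which is impossible for real $x$ and $1/p\in(0,1)$) and $r_p(\pm\infty)=0$, the hull--kernel description of closed ideals in a commutative semisimple Banach algebra identifies this ideal with $\{a\in C_p(\overline{\mR}):a(\pm\infty)=0\}$.
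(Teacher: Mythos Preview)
The paper does not supply a proof of this theorem; it is quoted as well known with references to Roch--Silbermann \cite[Propositions~2.1--2.5]{RS90} and Duduchava \cite{D87}. Your outline is therefore an independent reconstruction, and its overall architecture---compute the Mellin symbols in (b), then identify $\cA$ with $\Phi^{-1}\operatorname{Co}(C_p(\overline{\mR}))\Phi$ and read off (c) and (d) from the Gelfand theory of $C_p(\overline{\mR})$---matches the standard treatment in those references.

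You correctly isolate the $M_p$-density step in (a) as the main obstacle and are honest that Stone--Weierstrass alone does not close it. There is, however, a second gap you pass over in (d). The phrase ``the hull--kernel description of closed ideals in a commutative semisimple Banach algebra'' is not a general theorem: closed ideals need not coincide with the kernel of their hull (this is the spectral synthesis problem, which already fails in $L^1(\mR)$ by Malliavin's theorem). So the identification $\operatorname{id}_{C_p(\overline{\mR})}\{r_p\}=\{a:a(\pm\infty)=0\}$ does not follow from abstract nonsense and needs an argument specific to this algebra. A short one, once (a) is in hand, is a dimension count: the relation $s_p^2-r_p^2=1$ forces $s_p^2\equiv 1$ in the quotient $C_p(\overline{\mR})/\operatorname{id}\{r_p\}$; since $C_p(\overline{\mR})$ is generated by $1$ and $s_p$, that quotient is spanned by their images and hence at most two-dimensional, while evaluation at $+\infty$ and $-\infty$ already gives a surjection onto $\mC^2$. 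Thus the quotient is exactly $\mC^2$ and the two ideals coincide.
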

From $s_p^2-r_p^2=1$ and Theorem~\ref{th:algebra-A}(b) it follows that
\begin{equation}\label{eq:S-R-relation}
4P_+P_-=4P_-P_+=I-S^2=-R^2.
\end{equation}

Let us describe the quotient algebra $\cA^\pi:=(\cA+\cK)/\cK$. Since a Mellin
convolution operator is Fredholm if and only if it is invertible, from
Theorem~\ref{th:algebra-A} we obtain the following.
\begin{cor}\label{co:algebra-Api}
\begin{enumerate}
\item[{\rm(a)}]
The algebra $\cA^\pi$ is commutative and its maximal ideal space is homeomorphic
to $\overline{\mR}$.

\item[{\rm(b)}]
The Gelfand transform of a coset $(\Phi^{-1}\operatorname{Co}(a)\Phi)^\pi\in\cA^\pi$
for $a\in C_p(\overline{\mR})$ is given by
\[
\big[(\Phi^{-1}\operatorname{Co}(a)\Phi)^\pi\big]
\widehat{\hspace{2mm}}(x)=a(x)\quad\mbox{for}\quad x\in\overline{\mR}.
\]
In particular,
\[
(S^\pi)\widehat{\hspace{2mm}}(\pm\infty)=\pm 1,
\quad
(S^\pi)\widehat{\hspace{2mm}}(x)=s_p(x)
\quad\mbox{for}\quad x\in\mR,
\]
and if $\beta\in\mC$ and $\operatorname{Re}\beta\in(0,2\pi)$, then
\[
(R_\beta^\pi)\widehat{\hspace{2mm}}(\pm\infty)=0,
\quad
(R_\beta^\pi)\widehat{\hspace{2mm}}(x)=r_{p,\beta}(x)
\quad\mbox{for}\quad x\in\mR.
\]

\item[{\rm(c)}]
An operator $H\in\cA$ belongs to $\operatorname{id}_\cA\{R\}$ if and
only if
\[
(H^\pi)\widehat{\hspace{2mm}}(-\infty)=(H^\pi)\widehat{\hspace{2mm}}(+\infty)=0.
\]
\end{enumerate}
\end{cor}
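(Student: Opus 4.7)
The approach is to identify $\cA^\pi=(\cA+\cK)/\cK$ with $\cA$ itself as a Banach algebra. Since $\cA$ is commutative (noted in the paper) and $\cK$ is a two-sided ideal, $\cA^\pi$ is a commutative Banach algebra, and the natural map $\cA\to\cA^\pi$, $A\mapsto A^\pi$, is a continuous surjective homomorphism with kernel $\cA\cap\cK$. The key claim is that $\cA\cap\cK=\{0\}$; once this is established, the identification combined with Theorem~\ref{th:algebra-A}(c) immediately gives $M(\cA^\pi)\cong M(\cA)\cong\overline{\mR}$, proving part~(a).

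To prove $\cA\cap\cK=\{0\}$, I would take $A\in\cA\cap\cK$ and write $A=\Phi^{-1}\operatorname{Co}(a)\Phi$ with $a\in C_p(\overline{\mR})$, using Theorem~\ref{th:algebra-A}(a). For every $\lambda\ne 0$ the operator $\lambda I-A\in\cA$ is Fredholm because $A$ is compact; by the principle invoked just before the corollary (Mellin convolution operators are Fredholm iff invertible) combined with Theorem~\ref{th:algebra-A}(c), this forces $\lambda-a(x)\ne 0$ for every $x\in\overline{\mR}$. Hence no nonzero complex number lies in the range of $a$, so $a\equiv 0$ on $\overline{\mR}$ and $A=0$.

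Part~(b) then follows by pulling the Gelfand transform of $\cA$, provided by Theorem~\ref{th:algebra-A}(b)-(c), back through the isomorphism $A^\pi\mapsto A$: the Gelfand transform of $(\Phi^{-1}\operatorname{Co}(a)\Phi)^\pi$ at $x\in\overline{\mR}$ is simply $a(x)$. The explicit values at $\pm\infty$ reduce to asymptotic analysis of the specific symbols: $s_p(x)=\coth[\pi(x+i/p)]\to\pm 1$ as $x\to\pm\infty$, while $r_{p,\beta}(x)=e^{(x+i/p)(\pi-\beta)}/\sinh[\pi(x+i/p)]$ tends to $0$ at both endpoints precisely because $\operatorname{Re}\beta\in(0,2\pi)$ makes the hyperbolic sine dominate the exponential numerator in each tail. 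Part~(c) is then an immediate translation of Theorem~\ref{th:algebra-A}(d) through this same identification of Gelfand transforms.

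The main obstacle is the identity $\cA\cap\cK=\{0\}$, since the remaining steps amount to Gelfand-theoretic transfer plus elementary asymptotics of $s_p$ and $r_{p,\beta}$. This step depends critically on the \emph{Fredholm-implies-invertible} property of Mellin convolution operators with symbols in $C_p(\overline{\mR})$, which is the only non-trivial input beyond Theorem~\ref{th:algebra-A} and which the authors flag explicitly in the sentence preceding the corollary.
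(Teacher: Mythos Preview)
Your proof is correct and follows essentially the same route as the paper, which dispatches the corollary in a single sentence by invoking the Fredholm-iff-invertible property of Mellin convolution operators together with Theorem~\ref{th:algebra-A}. You have simply made explicit the intermediate step $\cA\cap\cK=\{0\}$ (and hence the isomorphism $\cA\cong\cA^\pi$) that the paper leaves to the reader; the remaining transfer of Gelfand data and the asymptotic checks for $s_p$ and $r_{p,\beta}$ are exactly as intended.
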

\section{Mellin pseudodifferential operators}\label{sec:Mellin-PDO}
\subsection{Boundedness}
If $a$ is an absolutely continuous function of finite total variation on $\mR$,
then $a'\in L^1(\mR)$ and
\[
V(a)=\int_\mR|a'(x)|dx
\]
(see, e.g., \cite[Chap. VIII, Sections 3 and 9; Chap. XI, Section~4]{N55}).
The set $V(\mR)$ of all absolutely continuous functions of finite total variation
on $\mR$ forms a Banach algebra when equipped with the norm
\[
\|a\|_V:=\|a\|_{L^\infty(\mR)}+V(a)=\|a\|_{L^\infty(\mR)}+\int_\mR|a'(x)|dx.
\]

Following \cite{K06,K06-IWOTA}, let $C_b(\mR_+,V(\mR))$ denote the Banach
algebra of all bounded continuous $V(\mR)$-valued functions on $\mR_+$
with the norm
\[
\|\fa(\cdot,\cdot)\|_{C_b(\mR_+,V(\mR))}=\sup_{t\in\mR_+}\|\fa(t,\cdot)\|_V.
\]
As usual, let $C_0^\infty(\mR_+)$ be the set of all infinitely differentiable
functions of compact support on $\mR_+$.

The following boundedness
result for Mellin pseudodifferential operators was obtained
in \cite[Theorem~3.1]{K06-IWOTA} (see also \cite[Theorem~3.1]{K06}).
\begin{thm}\label{th:boundedness-PDO}
If $\fa\in C_b(\mR_+,V(\mR))$, then the Mellin pseudodifferential operator
$\operatorname{Op}(\fa)$, defined for functions $f\in C_0^\infty(\mR_+)$ by the iterated
integral
\[
[\operatorname{Op}(\fa)f](t)=\frac{1}{2\pi}\int_\mR dx \int_{\mR_+}
\fa(t,x)\left(\frac{t}{\tau}\right)^{ix}f(\tau) \frac{d\tau}{\tau}
\quad\mbox{for}\quad t\in\mR_+,
\]
extends to a bounded linear operator on the space $L^p(\mR_+,d\mu)$ and there is a
number $C_p\in(0,\infty)$ depending only on $p$ such that
\[
\|\operatorname{Op}(\fa)\|_{\cB(L^p(\mR_+,d\mu))}
\le
C_p
\|\fa\|_{C_b(\mR_+,V(\mR))}.
\]
\end{thm}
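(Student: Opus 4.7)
The plan is to pass to the Fourier side, reduce the claim to controlling a parametrized family of Fourier multipliers via Stechkin's inequality, and then assemble the slice estimates into a single $L^p$-bound via a maximal-function argument. The main obstacle will be bridging the gap between the integrated control that $V(\mR)$-regularity provides in the second variable and the pointwise-in-$y_0$ control that a naive Minkowski estimate would require.

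\textbf{Step 1.} Using the isometric isomorphism $E$ of (\ref{eq:def-E}), I would first verify that conjugation sends $\operatorname{Op}(\fa)$ to the Fourier pseudodifferential operator
\[
[\operatorname{op}(b)g](y)=\frac{1}{2\pi}\int_\mR dx\int_\mR b(y,x)e^{i(y-z)x}g(z)\,dz
\]
on $L^p(\mR)$, with $b(y,x):=\fa(e^y,x)\in C_b(\mR,V(\mR))$ and $\|b\|_{C_b(\mR,V(\mR))}=\|\fa\|_{C_b(\mR_+,V(\mR))}$. Since $E$ is an isometric isomorphism, it suffices to show $\|\operatorname{op}(b)\|_{\cB(L^p(\mR))}\leq C_p\|b\|_{C_b(\mR,V(\mR))}$.

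\textbf{Step 2.} For each fixed $y\in\mR$, the slice $b(y,\cdot)\in V(\mR)\subset PC$, and Theorem~\ref{th:Stechkin} gives $b(y,\cdot)\in M_p(\mR)$ with multiplier norm bounded uniformly in $y$ by $\|S_\mR\|_{\cB(L^p(\mR))}\|b\|_{C_b(\mR,V(\mR))}$. Using the absolute continuity of $b(y,\cdot)$, I would then write
\[
b(y,x)=b(y,-\infty)+\int_\mR\chi_{(y_0,\infty)}(x)\,(\partial_{y_0}b)(y,y_0)\,dy_0,
\]
substitute into the definition of $\operatorname{op}(b)$ on test functions $g\in C_0^\infty(\mR)$, and interchange integrals via Fubini to obtain
\[
[\operatorname{op}(b)g](y)=b(y,-\infty)g(y)+\int_\mR(\partial_{y_0}b)(y,y_0)\,[W^0(\chi_{(y_0,\infty)})g](y)\,dy_0,
\]
where each $W^0(\chi_{(y_0,\infty)})$ is bounded on $L^p(\mR)$ by $2\|S_\mR\|_{\cB(L^p(\mR))}$ via Stechkin applied to a step function of total variation one.

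\textbf{Step 3.} The hardest part comes next: a naive Minkowski integral inequality in $y_0$ would demand $\int_\mR\sup_y|(\partial_{y_0}b)(y,y_0)|\,dy_0<\infty$, which is strictly stronger than the available $\sup_y\int_\mR|(\partial_{y_0}b)(y,y_0)|\,dy_0=\sup_y V(b(y,\cdot))<\infty$. To bypass this gap, I would apply the pointwise-in-$y$ bound
\[
|[\operatorname{op}(b)g](y)|\leq\|b(\cdot,-\infty)\|_\infty|g(y)|+V(b(y,\cdot))\,(\cC g)(y),
\]
where $(\cC g)(y):=\sup_{y_0\in\mR}|[W^0(\chi_{(y_0,\infty)})g](y)|$ is the Carleson maximal operator (in its form for the Fourier transform on $\mR$). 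Using the uniform estimate $V(b(y,\cdot))\leq\|b\|_{C_b(\mR,V(\mR))}$ together with the $L^p$-boundedness of $\cC$ (Carleson--Hunt theorem, $1<p<\infty$), one obtains $\|\operatorname{op}(b)g\|_p\leq C_p\|\fa\|_{C_b(\mR_+,V(\mR))}\|g\|_p$; pulling back through $E^{-1}$ then concludes the argument. The technical heart is exactly this use of a maximal-function inequality to convert a non-uniform integrated estimate into a uniform pointwise estimate.
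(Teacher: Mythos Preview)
The paper does not prove this theorem; it is quoted from \cite[Theorem~3.1]{K06-IWOTA} (see also \cite[Theorem~3.1]{K06}), so there is no in-paper argument to compare against directly.

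Your outline is correct and is, in fact, essentially the route taken in the cited source. After conjugating by $E$ to the Fourier picture, the representation
\[
b(y,x)=b(y,-\infty)+\int_\mR\chi_{(y_0,\infty)}(x)\,\partial_{y_0}b(y,y_0)\,dy_0
\]
together with the pointwise domination by the Carleson maximal operator $(\cC g)(y)=\sup_{y_0}|[W^0(\chi_{(y_0,\infty)})g](y)|$ is precisely how one converts the hypothesis $\sup_y\int_\mR|\partial_{y_0}b(y,y_0)|\,dy_0<\infty$ into an $L^p$ bound; the Carleson--Hunt theorem then gives $\|\cC g\|_{L^p(\mR)}\le C_p\|g\|_{L^p(\mR)}$ for $1<p<\infty$. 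You have also correctly isolated the reason why a plain Minkowski estimate fails and a maximal inequality is genuinely needed. Two small points worth making explicit in a full write-up: the limit $b(y,-\infty)$ exists for every $y$ because $b(y,\cdot)\in V(\mR)$, and $\sup_y|b(y,-\infty)|\le\|b\|_{C_b(\mR,V(\mR))}$; and the Fubini interchange in Step~2 is justified for $g\in C_0^\infty(\mR)$ since $\widehat g\in L^1(\mR)$ and $\partial_{y_0}b(y,\cdot)\in L^1(\mR)$ for each fixed $y$.
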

\subsection{Compactness of commutators}
Let $SO(\mR_+,V(\mR))$ denote the Banach subalgebra of
$C_b(\mR_+,V(\mR))$ consisting of all $V(\mR)$-valued functions
$\fa$ on $\mR_+$ that slowly oscillate at $0$ and
$\infty$, that is,
\[
\lim_{r\to 0} \operatorname{cm}_r^C(\fa)=
\lim_{r\to \infty} \operatorname{cm}_r^C(\fa)=0,
\]
where
\[
\operatorname{cm}_r^C(\fa)=
\max\big\{\big\|\fa(t,\cdot)-\fa(\tau,\cdot)\big\|_{L^\infty(\mR)}:
t,\tau\in[r,2r]\big\}.
\]
Let $\cE(\mR_+,V(\mR))$ be the Banach algebra of all $V(\mR)$-valued functions
$\fa$ belonging to $SO(\mR_+,V(\mR))$ and such that
\begin{equation}\label{eq:SO-V}
\lim_{|h|\to 0}\sup_{t\in\mR_+}\big\|\fa(t,\cdot)-\fa^h(t,\cdot)\big\|_V=0
\end{equation}
where $\fa^h(t,x):=\fa(t,x+h)$ for all $(t,x)\in
\mR_+\times \mR$.

The following result on compactness of commutators of Mellin pseudodifferential
operators was obtained in \cite[Theorem~3.5]{K08} (see also \cite[Corollary~8.4]{K06}).
\begin{thm}\label{th:comp-commutators-PDO}
If $\fa,\fb\in\cE(\mR_+,V(\mR))$, then the commutator
\[
\operatorname{Op}(\fa)\operatorname{Op}(\fb)-\operatorname{Op}(\fb)\operatorname{Op}(\fa)
\]
is compact on the space $L^p(\mR_+,d\mu)$.
\end{thm}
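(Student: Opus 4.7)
The plan is to prove this by establishing a composition rule for Mellin pseudodifferential operators with slowly oscillating symbols, in the spirit of the Hörmander calculus: $\operatorname{Op}(\fa)\operatorname{Op}(\fb)$ agrees with $\operatorname{Op}(\fa\cdot\fb)$ modulo a compact error. Since $\fa\cdot\fb=\fb\cdot\fa$ as pointwise products, this immediately forces $[\operatorname{Op}(\fa),\operatorname{Op}(\fb)]$ to be compact.

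First, unwinding the definition of $\operatorname{Op}(\fa)\operatorname{Op}(\fb)$ for $f\in C_0^\infty(\mR_+)$, I would organize the resulting iterated integral to separate the ``diagonal'' part $\operatorname{Op}(\fa\cdot\fb)$ from a genuine bilinear remainder $T(\fa,\fb)$ whose defining integrand contains a difference of the form $\fb(\tau,y)-\fb(t,y)$. Slow oscillation of $\fb$ in $t$ makes this factor small when $t$ and $\tau$ both lie near $0$ or both lie near $\infty$, which is the mechanism that degrades the symbol of the remainder from ``principal order'' to ``compact order.''

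Second, I would show $T(\fa,\fb)$ is compact on $L^p(\mR_+,d\mu)$ by approximating $\fa,\fb$ in the norm of $C_b(\mR_+,V(\mR))$ by symbols of the tensor-sum form $\sum_j \varphi_j(t)\psi_j(x)$ with $\varphi_j\in SO(\mR_+)$ and $\psi_j\in V(\mR)$; condition \eqref{eq:SO-V} gives equi-uniform continuity of $x$-translations across $t$, which together with the slow $t$-oscillation makes such a tensor approximation available. By Theorem~\ref{th:boundedness-PDO} one can pass to the operator-norm limit, and since $\cK(L^p(\mR_+,d\mu))$ is closed, it suffices to verify compactness for tensor-sum symbols. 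In this case each building block of $T(\fa,\fb)$ becomes a product of a multiplication operator (with slowly oscillating coefficient vanishing at $0$ and $\infty$ after subtracting its endpoint constants) and a Mellin convolution $\operatorname{Co}(b)$ with $b\in C_p(\overline{\mR})$ vanishing at $\pm\infty$; Theorem~\ref{th:Duduchava} then provides compactness for each such product, since constants commute with both multiplication operators and Mellin convolutions and thus drop out of the commutator. Combining, $[\operatorname{Op}(\fa),\operatorname{Op}(\fb)]=T(\fa,\fb)-T(\fb,\fa)\in\cK(L^p(\mR_+,d\mu))$.

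The main obstacle is to make the composition formula rigorous for $V(\mR)$-valued symbols and to control the remainder $T(\fa,\fb)$ uniformly in the $V$-norm, since $V(\mR)$ lacks many of the convenient smoothness properties of Schwartz or Sobolev classes. Condition \eqref{eq:SO-V} is tailored precisely to make the integration by parts in the dual variable converge absolutely, and Stechkin's inequality (Theorem~\ref{th:Stechkin}) provides the scalar bounds on Mellin multipliers in terms of total variation that are needed at the very end to sum up all the principal terms while keeping the slow-oscillation errors asymptotically negligible.
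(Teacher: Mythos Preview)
The paper does not actually prove this theorem: it is quoted verbatim from \cite[Theorem~3.5]{K08} (see also \cite[Corollary~8.4]{K06}), so there is no in-paper proof to compare against. Your high-level strategy---show $\operatorname{Op}(\fa)\operatorname{Op}(\fb)\equiv\operatorname{Op}(\fa\fb)$ modulo a compact remainder and then use $\fa\fb=\fb\fa$---is indeed the one carried out in those references, so in spirit you are on the right track.

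That said, the reduction you sketch has a real gap. You propose to approximate $\fa,\fb$ in $C_b(\mR_+,V(\mR))$ by finite tensor sums $\sum_j\varphi_j(t)\psi_j(x)$ and then invoke Theorem~\ref{th:Duduchava} on each building block ``after subtracting endpoint constants.'' But slowly oscillating functions $\varphi\in SO(\mR_+)$ generically have \emph{no} limits at $0$ or $\infty$---that is precisely what distinguishes $SO(\mR_+)$ from $C(\overline{\mR}_+)$---so there are no endpoint constants to subtract, and Duduchava's theorem (which requires $a(0+)=a(+\infty)=0$) does not apply to the remaining multiplication factor. The compactness of commutators like $[\varphi I,\operatorname{Co}(\psi)]$ for $\varphi\in SO(\mR_+)$ is itself a nontrivial fact (compare Theorem~\ref{th:comp-commutators}(a), which the paper proves by transplantation to $\mI$ and interpolation, not by Duduchava). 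Moreover, the density of tensor sums in $\cE(\mR_+,V(\mR))$ in the $C_b(\mR_+,V(\mR))$-norm is asserted but not justified; condition~\eqref{eq:SO-V} gives equicontinuity in the $x$-variable but says nothing about separability or approximability in the $t$-variable. In the cited proofs the remainder $T(\fa,\fb)$ is controlled directly via oscillation estimates on the compound symbol and a careful limiting-absorption/mollification argument in the $V(\mR)$-norm, rather than by a tensor-product reduction.
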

\section{Localization}\label{sec:localization}
\subsection{The Allan-Douglas local principle}
The Allan-Douglas local principle is one of the main tools in
studying singular integral operators in the last decades.
The aim of this section is to apply this principle to
operators in the algebra
\[
\cF:=\alg_\cB\big\{aI,S,W_\alpha,W_\alpha^{-1}:a\in SO(\mR_+)\big\}.
\]
Here is the formulation of the local principle taken from
\cite[Theorem~1.35(a)]{BS06}.
Let $\fA$ be a Banach algebra with identity. A subalgebra $\fZ$ of
$\fA$ is said to be a \textit{central subalgebra} if $za=az$ for
all $z\in\fZ$ and all $a\in\fA$.
\begin{thm}[Allan-Douglas]\label{th:AllanDouglas}
Let $\fA$ be a Banach algebra with identity
$e$ and let $\fZ$ be a closed central subalgebra of $\fA$ containing
$e$. Let $M(\fZ)$ be the maximal ideal space of $\fZ$, and for
$\omega\in M(\fZ)$, let $\fJ_\omega$ refer to the smallest closed
two-sided ideal of $\fA$ containing the ideal $\omega$. Then an
element $a$ is invertible in $\fA$ if and only if $a+\fJ_\omega$ is
invertible in the quotient algebra $\fA/\fJ_\omega$ for all $\omega\in
M(\fZ)$.
\end{thm}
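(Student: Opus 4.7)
The necessity direction is immediate: if $a$ is invertible in $\fA$ with inverse $a^{-1}$, then since each $\fJ_\omega$ is a two-sided ideal, the coset $a^{-1}+\fJ_\omega$ is a two-sided inverse of $a+\fJ_\omega$ in $\fA/\fJ_\omega$ for every $\omega\in M(\fZ)$. The substance of the theorem lies in the converse, which I would attack by a local-to-global argument driven by the Gelfand theory of $\fZ$.

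For the converse, I would first exploit centrality to make $\fJ_\omega$ concrete: because every $z\in\fZ$ commutes with every $c\in\fA$, the two-sided ideal of $\fA$ generated by the maximal ideal $\omega\subset\fZ$ coincides with the one-sided ideal $\omega\cdot\fA=\fA\cdot\omega$, so $\fJ_\omega=\clos_\fA(\omega\cdot\fA)$. By hypothesis, for each $\omega$ there exist $b_\omega,c_\omega\in\fA$ with $ab_\omega-e,\ c_\omega a-e\in\fJ_\omega$. I would then approximate $ab_\omega-e$ by a finite sum $\sum_i z_{\omega,i}d_{\omega,i}$ with $z_{\omega,i}\in\omega$, and exploit the fact that the Gelfand transforms $\widehat{z_{\omega,i}}$ vanish at $\omega$ and are continuous on $M(\fZ)$: there is a Gelfand-open neighborhood $U_\omega$ of $\omega$ on which all $|\widehat{z_{\omega,i}}|$ are uniformly small. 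Since $M(\fZ)$ is compact, extract a finite subcover $U_{\omega_1},\ldots,U_{\omega_N}$ and choose elements $\zeta_k\in\fZ$ whose Gelfand transforms approximate a partition of unity subordinate to this cover. The candidate global right inverse $B:=\sum_k\zeta_k b_{\omega_k}$ then satisfies $\|aB-e\|<1$, so a Neumann series inverts $aB$ and $a$ acquires a right inverse in $\fA$. The left inverse is produced symmetrically, and the two agree.

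The main obstacle is the patching step: a general unital commutative Banach algebra $\fZ$ does not admit genuine partitions of unity, because the Gelfand transform $\fZ\to C(M(\fZ))$ is merely a contractive algebra homomorphism, rarely surjective onto real-valued bump functions. One therefore has to settle for quantitative approximations, constructing $\zeta_k\in\fZ$ whose Gelfand transforms are close enough (in the supremum norm on $M(\fZ)$) to an idealized partition, and one must propagate uniform norm bounds on the products $\zeta_k b_{\omega_k}$ and $\zeta_k z_{\omega_k,i} d_{\omega_k,i}$ through the Neumann estimate. Balancing these norm estimates against the Neumann threshold, uniformly across the finite cover, is where essentially all of the technical weight of the theorem sits; everything else is a formal consequence of the Gelfand representation of $\fZ$ and the ideal structure of $\fA$.
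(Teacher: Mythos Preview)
The paper does not give its own proof of this theorem at all: it is quoted verbatim as a known tool from \cite[Theorem~1.35(a)]{BS06} and then used as a black box in the proof of Theorem~\ref{th:localization-realization}. There is therefore no in-paper argument to compare your proposal against.

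On its own merits, your outline follows the classical route to Allan's local principle, and the necessity direction and the description of $\fJ_\omega$ as $\clos_\fA(\omega\cdot\fA)$ are correct. You also correctly isolate the crux: the patching step. But this is also where your sketch stops short of being a proof. In a general commutative Banach algebra $\fZ$ the Gelfand map need not be anywhere near surjective onto $C(M(\fZ))$, so the phrase ``choose elements $\zeta_k\in\fZ$ whose Gelfand transforms approximate a partition of unity subordinate to this cover'' is exactly the assertion that requires justification, not an available move. Standard proofs avoid partitions of unity altogether: one first shows the upper semicontinuity of $\omega\mapsto\|x+\fJ_\omega\|$ on $M(\fZ)$ for each fixed $x\in\fA$, then uses an algebraic substitute for gluing, namely that finitely many ideals of $\fZ$ whose hulls have empty common intersection must sum to $\fZ$, so that one can write $e=\sum_k z_k$ with $z_k\in\fZ$ lying in ideals adapted to the cover. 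That decomposition of the identity, not an analytic approximation of bump functions, is what drives the Neumann estimate to close. As written, your Step~``choose $\zeta_k$'' is a restatement of the theorem's difficulty rather than its resolution.
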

The algebra $\fA/\fJ_\omega$ is referred to as the \textit{local
algebra} of $\fA$ at $\omega\in M(\fZ)$.

Now we are going to construct an algebra $\Lambda$ that contains the algebra
$\cF$ and such that the quotient algebra $\Lambda^\pi:=\Lambda/\cK$
has a center properly containing $\cA^\pi$. To this end we need several
compactness results.
\subsection{Compactness results}
The following compactness results were obtained in \cite[Corollaries 5.2--5.3]{KK03}.
\begin{thm}\label{th:compactness-commutators-interval}
Let $1<p<\infty$.
\begin{enumerate}
\item[{\rm(a)}]
If $a\in SO(\mI)$, then $aS_\mI-S_\mI aI_\mI\in\cK(L^p(\mI))$.

\item[{\rm(b)}]
If $\alpha\in SOS(\mI)$, then $W_\alpha S_\mI-S_\mI W_\alpha\in\cK(L^p(\mI))$.
\end{enumerate}
\end{thm}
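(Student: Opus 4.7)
My plan is to reduce both assertions to compactness results for Mellin pseudodifferential operators on $L^p(\mR_+,d\mu)$. The bridge is the transplantation framework of Section~3: the isomorphism $G:L^p(\mR_+)\to L^p(\mI)$ of Lemma~\ref{le:transplantation} transfers compactness questions from $\mI$ to $\mR_+$, after which the Mellin realization via $\Phi$ of \eqref{eq:def-Phi} and the calculus of Mellin pseudodifferential operators developed in Section~5 apply.

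For part (a), combining parts (a) and (b) of Lemma~\ref{le:transplantation} together with the commutativity of multiplication operators, I would conjugate by $G^{-1}(\cdot)G$ to rewrite the commutator as
\[
G^{-1}(aS_\mI-S_\mI a I_\mI)G = \sigma\,\bigl[(a\circ\eta^{-1})S-S(a\circ\eta^{-1})I\bigr]\,\sigma^{-1}I,
\]
where $\sigma(t):=(1+t)^{1-2/p}$. It then suffices to prove that the inner commutator $cS-ScI$ with $c:=a\circ\eta^{-1}$ is compact on $L^p(\mR_+)$ and to check that the formal similarity by $\sigma$ preserves compactness (which it does, since the outer operator is bounded as a conjugate via $G$). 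Arguing as in Lemma~\ref{le:transplanted-functions}(a), $a\in SO(\mI)$ gives $c\in SO(\mR_+)$. Conjugating by $\Phi$ and using Theorem~\ref{th:algebra-A}(b), $cS-ScI$ becomes the Mellin pseudodifferential commutator $\operatorname{Op}(\fc)\operatorname{Op}(\fs)-\operatorname{Op}(\fs)\operatorname{Op}(\fc)$ with $\fc(t,x):=c(t)$ and $\fs(t,x):=s_p(x)$. Both symbols lie in $\cE(\mR_+,V(\mR))$: $\fc$ is slowly oscillating in $t$ and constant in $x$; $\fs$ is constant in $t$, $s_p\in V(\mR)$ since $s_p'$ is continuous and decays exponentially, and condition \eqref{eq:SO-V} holds trivially. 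Theorem~\ref{th:comp-commutators-PDO} then delivers compactness.

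For part (b), the analogous transplantation using Lemma~\ref{le:transplantation}(c) --- together with the reverse direction of Lemma~\ref{le:transplanted-functions}(b) to produce the half-line shift $\alpha^*:=\eta\circ\alpha\circ\eta^{-1}\in SOS(\mR_+)$ --- reduces the question to the compactness of $W_{\alpha^*}S-SW_{\alpha^*}$ on $L^p(\mR_+)$. The main obstacle is that $\Phi W_{\alpha^*}\Phi^{-1}$ is not itself a Mellin convolution. The strategy is to invoke the exponential representation $\alpha^*(t)=te^{\omega(t)}$ from Lemma~\ref{le:exp-repr} and realize $\Phi W_{\alpha^*}\Phi^{-1}$, modulo a compact operator, as a Mellin pseudodifferential operator $\operatorname{Op}(\mathfrak{w})$ whose symbol $\mathfrak{w}(t,x)$ encodes the variable translation $y\mapsto y+\omega(e^y)$ in the logarithmic coordinate $y=\log t$; the properties $\omega\in SO(\mR_+)\cap C^1(\mR_+)$ and the bound \eqref{eq:exp-repr-2} should place $\mathfrak{w}$ in $\cE(\mR_+,V(\mR))$. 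This realization is the technical crux, anticipated in the introduction by the remark that $W_\alpha R$ is similar to a Mellin pseudodifferential operator. Once it is in place, Theorem~\ref{th:comp-commutators-PDO} applied to $\operatorname{Op}(\mathfrak{w})$ and $\operatorname{Op}(\fs)=\Phi S\Phi^{-1}$ completes the argument.
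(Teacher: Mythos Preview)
The paper does not give a proof of Theorem~\ref{th:compactness-commutators-interval}; it is quoted from \cite[Corollaries~5.2--5.3]{KK03}. Your plan inverts the paper's logic in Section~6: there the half-line result (Theorem~\ref{th:comp-commutators}) is derived \emph{from} Theorem~\ref{th:compactness-commutators-interval} via transplantation, whereas you propose to prove the interval statement by going to $\mR_+$ and invoking the Mellin pseudodifferential calculus. That is not circular in principle, but both parts of your outline contain genuine gaps.

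For part~(a), the similarity by $\sigma(t)=(1+t)^{1-2/p}$ is harmless only when $p=2$. For $p\neq 2$ neither $\sigma I$ nor $\sigma^{-1}I$ is a bounded operator on $L^p(\mR_+)$, so from the compactness of $cS-ScI$ you cannot conclude the compactness of $\sigma(cS-ScI)\sigma^{-1}I$; the fact that the left-hand side $G^{-1}(aS_\mI-S_\mI aI_\mI)G$ happens to be bounded does not help. This is exactly the obstruction the paper sidesteps, in the reverse direction, by doing the transplantation only for $p=2$ (where $w_2\equiv 1$, hence $\sigma\equiv 1$) and then applying the Krasnosel'skii interpolation theorem to reach all $p\in(1,\infty)$. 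Your argument can be repaired the same way, but as written it is incomplete.

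For part~(b), the ``technical crux'' you identify is actually a hard obstruction in the form stated. The candidate symbol for $\Phi W_{\alpha^*}\Phi^{-1}$ is, up to a Jacobian factor, $\mathfrak{w}(t,x)=e^{i\omega(t)(x+i/p)}$; for any $t$ with $\omega(t)\neq 0$ this function has modulus $e^{-\omega(t)/p}$ independent of $x$ and infinite total variation in $x$, so $\mathfrak{w}\notin C_b(\mR_+,V(\mR))$, let alone $\cE(\mR_+,V(\mR))$. The paper never represents $W_\alpha$ itself as a Mellin PDO with symbol in this class; what Lemmas~\ref{le:fc} and \ref{le:WR2} show is that $W_\alpha R$ is such an operator, the factor $r_p(x)$ providing the exponential decay in $x$ that makes the total variation finite. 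Without such a smoothing factor, Theorem~\ref{th:comp-commutators-PDO} is inapplicable and your proposed route to (b) does not go through.
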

Now we prove their counterparts for the case of $\mR_+$.
\begin{thm}\label{th:comp-commutators}
Let $1<p<\infty$.
\begin{enumerate}
\item[{\rm(a)}]
If $a\in SO(\mR_+)$, then $aS-SaI\in\cK$.

\item[{\rm(b)}]
If $\alpha\in SOS(\mR_+)$, then $W_\alpha S-SW_\alpha\in\cK$.
\end{enumerate}
\end{thm}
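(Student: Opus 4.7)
The natural strategy is to transplant to the unit interval via the isometric isomorphism $G:L^p(\mR_+)\to L^p(\mI)$ of Lemma~\ref{le:transplantation}, so that both statements reduce to Theorem~\ref{th:compactness-commutators-interval}. For part (a), Lemma~\ref{le:transplantation}(a), (b) give
\[
G(aS - SaI)G^{-1} = (a\circ\eta)\bigl(w_p^{-1}S_\mI w_p I_\mI\bigr) - \bigl(w_p^{-1}S_\mI w_p I_\mI\bigr)(a\circ\eta)I_\mI,
\]
and since the multiplications by $a\circ\eta$, $w_p$ and $w_p^{-1}$ pairwise commute, this collapses to $w_p^{-1}\bigl[(a\circ\eta)I_\mI,\,S_\mI\bigr]w_p I_\mI$. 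By Lemma~\ref{le:transplanted-functions}(a), $a\circ\eta\in SO(\mI)$, so the inner commutator is compact on $L^p(\mI)$ by Theorem~\ref{th:compactness-commutators-interval}(a).

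For part (b), Lemma~\ref{le:transplantation}(c) yields $GW_\alpha G^{-1}=c_{\alpha,p} W_{\widetilde{\alpha}}$. Expanding $G(W_\alpha S - SW_\alpha)G^{-1}$ and pushing the factors $w_p^{\pm 1}I_\mI$ through $W_{\widetilde{\alpha}}$ via the intertwining $W_{\widetilde{\alpha}}\cdot(fI_\mI) = (f\circ\widetilde{\alpha})I_\mI\cdot W_{\widetilde{\alpha}}$ rewrites everything as $w_p^{-1}(\cdots)w_p I_\mI$, whose interior is an algebraic combination of the commutators $[c_{\alpha,p}I_\mI,S_\mI]$ and $[W_{\widetilde{\alpha}},S_\mI]$ together with $W_{\widetilde{\alpha}}$ and the multiplication by $(w_p\circ\widetilde{\alpha})/w_p = c_{\alpha,p}^{1-p/2}$, which by Lemma~\ref{le:transplanted-functions}(b) is bounded above and below. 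By the same lemma, $c_{\alpha,p}\in SO(\mI)$ and $\widetilde{\alpha}\in SOS(\mI)$, so Theorem~\ref{th:compactness-commutators-interval} makes both commutators compact on $L^p(\mI)$.

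The central technical obstacle in both parts is that $w_p(y)=(1-y)^{2/p-1}$ is unbounded near $y=1$ whenever $p\ne 2$, so the compactness of an abstract operator $K$ on $L^p(\mI)$ does not by itself imply compactness of $w_p^{-1}Kw_p I_\mI$. Here, however, $K$ is a Calder\'on-type commutator, and its kernel multiplied by the weight ratio $w_p(y')/w_p(y)=((1-y')/(1-y))^{2/p-1}$ still defines a compact operator; one verifies this either by direct kernel analysis, or by approximating $a\circ\eta$ (resp.~$c_{\alpha,p}$) uniformly in $SO(\mI)$ by smooth functions, for which the sandwich commutators have smooth kernels and are compact, and then passing to the limit in the operator norm on $L^p(\mI)$. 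For part (a) there is a cleaner alternative that sidesteps the weight entirely: conjugate by $\Phi$ and apply Theorem~\ref{th:comp-commutators-PDO} to the symbols $\mathfrak{a}(t,x)=a(t)$ and $\mathfrak{s}(t,x)=s_p(x)$, both of which belong to $\cE(\mR_+,V(\mR))$---the $V(\mR)$-valued continuity of $\mathfrak{s}$ resting on the exponential decay of $s_p'$ at $\pm\infty$, so $s_p'\in L^1(\mR)$.
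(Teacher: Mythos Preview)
Your transplantation strategy is the right one and matches the paper's approach, but there is a genuine gap in how you handle the weight $w_p$. You correctly observe that for $p\neq 2$ the factor $w_p(y)=(1-y)^{2/p-1}$ (or its reciprocal) is unbounded near $y=1$, so that compactness of a commutator $K$ on $L^p(\mI)$ does not automatically yield compactness of $w_p^{-1}Kw_pI_\mI$. Your proposed fixes---``direct kernel analysis'' or approximating the coefficient by smooth functions---are not carried out, and the second one does not obviously work: even for smooth $a$ the commutator kernel $\frac{a(y)-a(y')}{y'-y}\cdot\bigl(\frac{1-y'}{1-y}\bigr)^{2/p-1}$ is unbounded on $\mI\times\mI$, and a general $a\in SO(\mI)$ cannot be approximated uniformly by functions vanishing near the endpoints. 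So as written, neither (a) nor (b) is complete via transplantation at the given $p$.

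The paper resolves this much more simply: it carries out the transplantation \emph{only for $p=2$}, where $w_2\equiv 1$ and the weight disappears entirely. This gives $aS-SaI\in\cK(L^2(\mR_+))$ and $W_\alpha S-SW_\alpha\in\cK(L^2(\mR_+))$ directly from Theorem~\ref{th:compactness-commutators-interval}. Since both operators are bounded on every $L^p(\mR_+)$ with $1<p<\infty$, compactness on all $L^p$ then follows from the Krasnosel'skii interpolation theorem for compact operators. This avoids the weight issue altogether.

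Your alternative route for part~(a) via Theorem~\ref{th:comp-commutators-PDO}, applied to the symbols $\mathfrak{a}(t,x)=a(t)$ and $\mathfrak{s}_p(t,x)=s_p(x)$ in $\cE(\mR_+,V(\mR))$, is correct and self-contained (and is in fact how the paper later verifies $aI\in\Lambda$). However, it does not extend to part~(b), since $W_\alpha$ itself is not a Mellin pseudodifferential operator with symbol in $\cE(\mR_+,V(\mR))$; only products like $W_\alpha R$ have that structure. For (b) you still need either the $p=2$ plus interpolation argument or a genuinely different idea.
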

\begin{proof}
(a)
If $a\in SO(\mR_+)$, then $a\circ\eta\in SO(\mI)$ by Lemma~\ref{le:transplanted-functions}(a).
Then in view of Theorem~\ref{th:compactness-commutators-interval}(a),
the operator $(a\circ\eta)S_\mI-S_\mI(a\circ\eta)I_\mI$ is compact on
$L^2(\mI)$. For $p=2$ from Lemma~\ref{le:transplantation}(a),(b) it follows that
\[
aS-SaI=G^{-1}[(a\circ\eta) S_\mI-S_\mI(a\circ\eta)I_\mI]G\in\cK(L^2(\mR_+)).
\]
Since the operator $aS-SaI$ is bounded on all spaces $L^p(\mR_+)$, $p\in(1,\infty)$,
from the Krasnosel'skii interpolation theorem (see
\cite[Theorem~3.10]{KZPS76}) it follows that the operator $aS-SaI$ is compact
on all spaces $L^p(\mR_+)$, $p\in(1,\infty)$.

(b)
If $\alpha\in SOS(\mR_+)$, then $\widetilde{\alpha}\in SOS(\mR_+)$
and $c_{\alpha,2}\in SO(\mI)$ due to Lemma~\ref{le:transplanted-functions}(b).
From Theorem~\ref{th:compactness-commutators-interval} it follows that
\[
K_1:=W_{\widetilde{\alpha}}S_\mI-S_\mI W_{\widetilde{\alpha}}\in\cK(L^2(\mI)),
\quad
K_2:=c_{\alpha,2}S_\mI-S_\mI c_{\alpha,2}I_\mI\in\cK(L^2(\mI)).
\]
Then
\[
c_{\alpha,2}W_{\widetilde{\alpha}}S_\mI-S_\mI c_{\alpha,2}W_{\widetilde{\alpha}}
=
c_{\alpha,2}K_1+K_2W_{\widetilde{\alpha}}
\in\cK(L^2(\mI)).
\]
From Lemma~\ref{le:transplantation}(a),(c) we get
\[
W_\alpha S-SW_\alpha=
G^{-1}[c_{\alpha,2}W_{\widetilde{\alpha}}S_\mI-S_\mI c_{\alpha,2}W_{\widetilde{\alpha}}]G
\in\cK(L^2(\mR_+)).
\]
Since the operator $W_\alpha S-SW_\alpha$ is bounded on all spaces
$L^p(\mR_+)$, $1<p<\infty$, we conclude that the
operator $W_\alpha S-SW_\alpha$ is compact on all spaces $L^p(\mR_+)$,
$1<p<\infty$, by analogy with part (a).
\end{proof}
\begin{cor}\label{co:comp-commutators-algebra}
If $a\in SO(\mR_+)$ and $\alpha\in SOS(\mR_+)$, then for every $A\in\cA$,
\[
aA-AaI\in\cK,
\quad
W_\alpha A-AW_\alpha\in\cK.
\]
\end{cor}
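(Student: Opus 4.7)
The plan is a standard ``subalgebra of well-behaved commutators" argument, reducing the corollary to the already-established commutator estimates of Theorem~\ref{th:comp-commutators} and the fact that $\cA$ is generated (as a closed subalgebra of $\cB$) by the operators $I$ and $S$.

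First, fix $a\in SO(\mR_+)$ and consider the set
\[
\cC_a:=\big\{B\in\cB:aB-BaI\in\cK\big\}.
\]
I would verify that $\cC_a$ is a closed subalgebra of $\cB$ containing $I$. The containment $I\in\cC_a$ is trivial. Linearity of $\cC_a$ is immediate. The Leibniz identity
\[
a(BC)-(BC)aI=(aB-BaI)C+B(aC-CaI)
\]
shows that $\cC_a$ is closed under products, using that $\cK$ is a two-sided ideal. Finally, $\cC_a$ is closed in the operator norm: if $B_n\to B$ in $\cB$ with $aB_n-B_naI\in\cK$, then, since multiplication by $aI\in\cB$ is continuous on $\cB$, we have $aB_n-B_naI\to aB-BaI$ in norm, and $\cK$ is norm-closed, so $aB-BaI\in\cK$. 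By Theorem~\ref{th:comp-commutators}(a), $S\in\cC_a$. Hence $\cC_a$ contains the smallest closed subalgebra of $\cB$ containing $I$ and $S$, which is $\cA$. This yields $aA-AaI\in\cK$ for every $A\in\cA$.

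Next, for $\alpha\in SOS(\mR_+)$ I would repeat exactly the same scheme with $W_\alpha$ in place of $aI$. Define
\[
\cC_\alpha:=\big\{B\in\cB:W_\alpha B-BW_\alpha\in\cK\big\}.
\]
The Leibniz-type identity
\[
W_\alpha(BC)-(BC)W_\alpha=(W_\alpha B-BW_\alpha)C+B(W_\alpha C-CW_\alpha)
\]
together with the fact that $W_\alpha$ is bounded (since $\log\alpha'\in C_b(\mR_+)$) again shows that $\cC_\alpha$ is a closed subalgebra of $\cB$ containing $I$. By Theorem~\ref{th:comp-commutators}(b), $S\in\cC_\alpha$, whence $\cA\subset\cC_\alpha$, which gives $W_\alpha A-AW_\alpha\in\cK$ for every $A\in\cA$.

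There is no genuine obstacle here; the only point to be careful about is that the norm-closedness of $\cC_a$ and $\cC_\alpha$ relies on $aI$ and $W_\alpha$ being bounded on $L^p(\mR_+)$, which are guaranteed by $a\in SO(\mR_+)\subset C_b(\mR_+)$ and $\log\alpha'\in C_b(\mR_+)$, respectively. All the analytic content has already been absorbed into Theorem~\ref{th:comp-commutators}, and the corollary is obtained by propagating compactness of the commutator from the generators $I$ and $S$ to all of $\cA=\alg_\cB\{I,S\}$.
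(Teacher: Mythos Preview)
Your proof is correct and follows essentially the same approach as the paper's: both arguments exploit that the set of operators commuting modulo $\cK$ with a fixed bounded operator is a closed subalgebra of $\cB$, so containing $I$ and $S$ forces it to contain $\cA=\alg_\cB\{I,S\}$. The paper compresses this into a single sentence (``if $B\in\cB$ is such that $BS-SB\in\cK$, then for every $A\in\cA$ the commutator $BA-AB$ is compact''), while you spell out the verification explicitly.
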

\begin{proof}
It is easy to see that if $B\in\cB$ is such that $BS-SB\in\cK$, then for
every $A\in\cA$, the commutator $BA-AB$ is compact. It remains to apply
Theorem~\ref{th:comp-commutators}.
\end{proof}
\begin{thm}\label{th:comp-aR}
If $a\in C_b(\mR_+)$ and $\lim\limits_{t\to s}a(t)=0$ for $s\in\{0,\infty\}$,
then $aR\in\cK$.
\end{thm}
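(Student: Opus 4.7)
The plan is to transfer the problem to the measure $d\mu = dt/t$ via the isometric isomorphism $\Phi$ and then apply the Duduchava compactness theorem (Theorem~\ref{th:Duduchava}) to a Mellin convolution representation of $R$.

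First I would conjugate by $\Phi$. Since multiplication by $a\in C_b(\mR_+)$ commutes with multiplication by $t^{1/p}$, we have $\Phi(aI)\Phi^{-1} = aI$ acting on $L^p(\mR_+,d\mu)$. Combining this with Theorem~\ref{th:algebra-A}(b) applied with $\beta=\pi$, which gives $\Phi R\Phi^{-1} = \operatorname{Co}(r_p)$, I obtain
\[
\Phi(aR)\Phi^{-1} = a\,\operatorname{Co}(r_p)
\]
as an operator on $L^p(\mR_+,d\mu)$, where $r_p(x) = 1/\sinh[\pi(x+i/p)]$.

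Next I would verify the hypotheses of Theorem~\ref{th:Duduchava} for the pair $(a, r_p)$. By Theorem~\ref{th:algebra-A}(b) we have $r_p \in C_p(\overline{\mR}) \subset PC_p$ (using the inclusion $C_p(\overline{\mR}) = PC_p\cap C(\mR) \subset PC_p$ noted in Section~\ref{sec:Mellin-convolution}). From the explicit formula for $r_p$, as $x\to\pm\infty$ the denominator $|\sinh[\pi(x+i/p)]|$ tends to $+\infty$, so $r_p(\pm\infty) = 0$. Together with the assumed limits $\lim_{t\to 0}a(t) = \lim_{t\to\infty}a(t) = 0$, all four vanishing conditions of Theorem~\ref{th:Duduchava} are satisfied.

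Therefore $a\operatorname{Co}(r_p) \in \cK(L^p(\mR_+,d\mu))$, and pulling back by the isometric isomorphism $\Phi^{-1}$ gives $aR = \Phi^{-1}(a\operatorname{Co}(r_p))\Phi \in \cK(L^p(\mR_+)) = \cK$, as desired. There is no real obstacle here; the whole argument is essentially a one-line translation of Duduchava's theorem through the isomorphism $\Phi$, the only point requiring any care being the identification of $\Phi R\Phi^{-1}$ with the Mellin convolution whose symbol vanishes at $\pm\infty$.
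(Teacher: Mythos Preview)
Your proof is correct and follows essentially the same approach as the paper: conjugate by $\Phi$ to rewrite $aR$ as $\Phi^{-1}\big(a\operatorname{Co}(r_p)\big)\Phi$, observe that $r_p\in C_p(\overline{\mR})\subset PC_p$ with $r_p(\pm\infty)=0$, and apply Theorem~\ref{th:Duduchava}. The paper's proof is virtually identical, differing only in presentation.
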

\begin{proof}
By Theorem~\ref{th:algebra-A}(b), $r_p\in C_p(\overline{\mR})\subset PC_p$ and
$R=\Phi^{-1}\operatorname{Co}(r_p)\Phi$. It is easy to see that
\[
\lim_{x\to-\infty}r_p(x)=\lim_{x\to+\infty}r_p(x)=0.
\]
Hence, by Theorem~\ref{th:Duduchava}, $a\operatorname{Co}(r_p)\in\cK(L^p(\mR_+,d\mu))$.
Therefore the operator $aR=a\Phi^{-1}\operatorname{Co}(r_p)\Phi=\Phi^{-1}a\operatorname{Co}(r_p)\Phi$
is compact on $L^p(\mR_+)$.
\end{proof}
\begin{cor}\label{co:comp-aH}
If $a\in C_b(\mR_+)$, $\lim\limits_{t\to s}a(t)=0$ for $s\in\{0,\infty\}$ and
$H\in\operatorname{id}_\cA\{R\}$, then $aH\in\cK$.
\end{cor}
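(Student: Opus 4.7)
My plan is to reduce the statement to the already-established compactness of $aR$ itself (Theorem~\ref{th:comp-aR}). The key structural observation is that since $\cA$ is commutative (Theorem~\ref{th:algebra-A}) and contains the identity $I$, the smallest closed two-sided ideal of $\cA$ containing $R$ coincides with the norm-closure of the principal ideal $R\cA=\{RC:C\in\cA\}$. Indeed, for any $A,B\in\cA$ one has $ARB=R(AB)$, so every algebraic two-sided ideal generated by $R$ is already a principal ideal. Hence every $H\in\operatorname{id}_\cA\{R\}$ can be approximated in the operator norm by a sequence $H=\lim_{n\to\infty}RC_n$ with $C_n\in\cA$.

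With such an approximation in hand, I would compute
\[
aH=(aI)H=\lim_{n\to\infty}(aI)(RC_n)=\lim_{n\to\infty}(aR)C_n,
\]
where the first equality is just the identification of the function $a$ with the multiplier $aI\in\cB$ (bounded because $a\in C_b(\mR_+)\subset L^\infty(\mR_+)$), and the limit passes inside because left multiplication by a bounded operator is continuous in the operator norm. By Theorem~\ref{th:comp-aR} the operator $aR$ is compact, and since $\cK$ is a two-sided ideal in $\cB$ each of the products $(aR)C_n$ lies in $\cK$. Closedness of $\cK$ in $\cB$ then forces $aH\in\cK$, proving the corollary.

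I do not expect a genuine obstacle here: the corollary is essentially a bootstrap of Theorem~\ref{th:comp-aR} from the generator $R$ to the whole closed ideal it generates, made possible by the commutativity of $\cA$. The only point that warrants one explicit line is the reduction of $\operatorname{id}_\cA\{R\}$ to $\clos_\cA(R\cA)$; everything else is the standard fact that $\cK$ is a closed two-sided ideal in $\cB$.
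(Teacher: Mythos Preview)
Your proof is correct and follows essentially the same approach as the paper: the paper also observes that commutativity of $\cA$ gives $\operatorname{id}_\cA\{R\}=\clos_\cA\{RA:A\in\cA\}$ and then says the result follows immediately from Theorem~\ref{th:comp-aR}. Your write-up simply spells out the limit argument that the paper leaves implicit.
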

\begin{proof}
Since $\cA$ is commutative, we see that
$\operatorname{id}_\cA\{R\}=\clos_\cA\{RA:A\in\cA\}$.
From this equality and Theorem~\ref{th:comp-aR} we immediately get the statement.
\end{proof}
\subsection{Algebras $\cZ$, $\cD$, and $\Lambda$}
Let us consider
\[
\begin{split}
\cZ &:=\alg_\cB\big\{I,S,cR,K:\ c\in SO(\mR_+),\ K\in\cK\big\},
\\
\cD &:=\alg_\cB\big\{aI,S:\ a\in SO(\mR_+)\big\},
\\
\Lambda &:=\big\{A\in\cB:\ AC-CA\in\cK\text{ for all }C\in\cZ\big\}.
\end{split}
\]
\begin{lem}\label{le:alg-Lambda}
\begin{enumerate}
\item[{\rm (a)}]
The set $\Lambda$ is a closed unital subalgebra of $\cB$.

\item[{\rm (b)}]
The set $\cK$ is a closed two-sided ideal of the algebra $\Lambda$.

\item[{\rm (c)}]
An operator $A\in\Lambda$ is Fredholm if and only if the coset $A^\pi:=A+\cK$
is invertible in the quotient algebra $\Lambda^\pi:=\Lambda/\cK$.
\end{enumerate}
\end{lem}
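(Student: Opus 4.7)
The plan is to verify each part in turn, with (a) and (b) being routine algebraic bookkeeping and (c) being the essential content (a kind of inverse-closedness statement).

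For part (a), I would first note that $I\in\Lambda$ trivially since $IC-CI=0$ for every $C\in\cZ$, and that linearity of the commutator $A\mapsto AC-CA$ in $A$ makes $\Lambda$ a linear subspace. To show closure under products, I would invoke the Leibniz-type identity
\[
(A_1A_2)C-C(A_1A_2)=A_1(A_2C-CA_2)+(A_1C-CA_1)A_2
\]
together with the fact that $\cK$ is a two-sided ideal in $\cB$. Finally, norm closedness of $\Lambda$ follows because for each fixed $C\in\cZ$, the map $A\mapsto AC-CA$ from $\cB$ to $\cB$ is continuous and $\cK$ is closed in $\cB$.

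For part (b), the inclusion $\cK\subset\Lambda$ is immediate: for $K\in\cK$ and any $C\in\cZ\subset\cB$, both $KC$ and $CK$ lie in $\cK$ (as $\cK$ is a two-sided ideal of $\cB$), so $KC-CK\in\cK$. Since $\cK$ is already a closed two-sided ideal of the larger algebra $\cB$, it is automatically a closed two-sided ideal of the subalgebra $\Lambda$.

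Part (c) is the only substantive step. By the classical Atkinson theorem, $A\in\cB$ is Fredholm if and only if $A^\pi$ is invertible in $\cB^\pi=\cB/\cK$. One implication is free: if $A^\pi$ is invertible in $\Lambda^\pi$, a fortiori it is invertible in $\cB^\pi$, so $A$ is Fredholm. For the converse, I would show that any $\cB$-regularizer of an operator $A\in\Lambda$ automatically lies in $\Lambda$. Concretely, suppose $B\in\cB$ satisfies $AB-I\in\cK$ and $BA-I\in\cK$. For $C\in\cZ$, write
\[
BC=BC(AB)+BC(I-AB)\equiv BCAB\pmod{\cK},
\]
\[
CB=(BA)CB+(I-BA)CB\equiv BACB\pmod{\cK},
\]
and subtract to obtain $BC-CB\equiv B(CA-AC)B\pmod{\cK}$. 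Since $A\in\Lambda$, the commutator $CA-AC$ is compact, hence so is $B(CA-AC)B$, and therefore $BC-CB\in\cK$. Thus $B\in\Lambda$, and $A^\pi$ is invertible in $\Lambda^\pi$ with inverse $B^\pi$.

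The only point requiring any thought is the calculation in (c); everything else is bookkeeping from the definitions. There is no real obstacle, since this is the standard pattern used to transfer Fredholm equivalence to any subalgebra of $\cB$ containing $\cK$ and consisting of operators that commute modulo $\cK$ with a prescribed family. The lemma is preparing the stage for the Allan–Douglas local principle, which will be applied to $\Lambda^\pi$ in the next subsection, with $\cA^\pi$ (enlarged by the generators $cR$) playing the role of a central subalgebra.
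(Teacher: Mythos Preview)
Your proof is correct and complete; in fact the paper does not give any proof of this lemma at all, stating only that ``the proof is straightforward and therefore it is omitted.'' Your argument is exactly the standard one the authors have in mind, including the key regularizer computation $BC-CB\equiv B(CA-AC)B\pmod{\cK}$ for part~(c), so there is nothing to compare.
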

The proof is straightforward and therefore it is omitted.
\begin{thm}\label{th:embeddings}
We have $\cK\subset\cZ\subset\cD\subset\cF\subset\Lambda$.
\end{thm}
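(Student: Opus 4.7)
The plan attacks the four inclusions in order. The endpoints $\cK\subset\cZ$ and $\cD\subset\cF$ are immediate from the definitions, since $\cK$ is listed among the generators of $\cZ$ and every generator of $\cD$ is a generator of $\cF$.

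For $\cZ\subset\cD$, I verify that each generator $I$, $S$, $cR$, $K$ of $\cZ$ lies in $\cD$. The first two are already generators of $\cD$. For $cR$, Theorem~\ref{th:algebra-A}(a)--(b) yields $R=\Phi^{-1}\operatorname{Co}(r_p)\Phi\in\cA=\alg_\cB\{I,S\}\subset\cD$, whence $cR=(cI)R\in\cD$. The substantive step is $\cK\subset\cD$, which I would prove by the standard singular-integral density argument: choosing $a,b\in C(\overline{\mR}_+)\subset SO(\mR_+)$ with compact supports that are disjoint inside $\mR_+$, the operator $aSb\in\cD$ has the bounded kernel $a(t)b(\tau)/[\pi i(\tau-t)]$ supported on a bounded rectangle, hence is Hilbert--Schmidt on $L^2(\mR_+)$ and, by standard kernel estimates, compact on every $L^p(\mR_+)$; finite linear combinations of such operators are norm-dense in $\cK$, and since $\cD$ is closed, the inclusion follows. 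I expect this density step to be the main obstacle of the whole theorem.

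For $\cF\subset\Lambda$, it suffices to check that each generator $G\in\{aI,S,W_\alpha,W_\alpha^{-1}\}$ of $\cF$ commutes with each generator $C\in\{I,S,cR,K\}$ of $\cZ$ modulo $\cK$; the sets $\{A\in\cB:AC-CA\in\cK\}$ and $\{B\in\cB:GB-BG\in\cK\}$ are closed subalgebras, so commutation lifts from generators to all of $\cF$ and $\cZ$. Pairs involving $I$ or a compact operator are trivial, and the pairs $(aI,S)$ and $(W_\alpha,S)$ are precisely Theorem~\ref{th:comp-commutators}(a)--(b). For $G\in\{aI,S\}$, the expansion $[G,cR]=[G,cI]R+cI[G,R]$ works: the first summand is compact by Theorem~\ref{th:comp-commutators}(a) (and vanishes when $G=aI$), while the second is compact by Corollary~\ref{co:comp-commutators-algebra} together with commutativity of $\cA$, which gives $[S,R]=0$. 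The $W_\alpha^{-1}$ cases reduce to the $W_\alpha$ ones via Lemma~\ref{le:SOS-inverse}.

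The most delicate commutator is the pair $(W_\alpha,cR)$, where the naive decomposition fails because $[W_\alpha,cI]=(c\circ\alpha-c)W_\alpha$ is not obviously compact. Here I would write
\[
W_\alpha(cR)-(cR)W_\alpha=(c\circ\alpha-c)W_\alpha R+cI\bigl(W_\alpha R-RW_\alpha\bigr).
\]
The second summand is compact because Theorem~\ref{th:comp-commutators}(b) propagates from $S$ to all of $\cA$ by the same closure argument, so $[W_\alpha,R]\in\cK$. For the first summand, Lemma~\ref{le:continuous-SOS} gives $d:=c\circ\alpha-c\in C_b(\mR_+)$ with $\lim_{t\to s}d(t)=0$ for $s\in\{0,\infty\}$; rewriting $dW_\alpha R=d(W_\alpha R-RW_\alpha)+(dR)W_\alpha$, the first term is bounded times compact and the second is compact since $dR\in\cK$ by Theorem~\ref{th:comp-aR}. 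This finishes the commutator bookkeeping and completes the chain, with the density argument for $\cK\subset\cD$ being the only genuinely non-formal ingredient.
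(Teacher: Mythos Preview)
Your proposal is correct and follows essentially the same route as the paper: the paper also dispatches $\cK\subset\cZ$ and $\cD\subset\cF$ trivially, handles $\cZ\subset\cD$ by checking generators and citing the well-known inclusion $\cK\subset\alg_\cB\{aI,S:a\in C(\overline{\mR}_+)\}$ (your density sketch is the standard content of that citation), and proves $\cF\subset\Lambda$ by commuting generators modulo $\cK$, reducing $W_\alpha^{-1}$ to $W_\alpha$ via Lemma~\ref{le:SOS-inverse}. The one cosmetic difference is in the $(W_\alpha,cR)$ commutator: the paper writes
\[
W_\alpha cR-cRW_\alpha=W_\alpha\big[(c-c\circ\alpha_{-1})R\big]+c(W_\alpha R-RW_\alpha),
\]
so that the vanishing factor sits \emph{immediately} in front of $R$ and Theorem~\ref{th:comp-aR} applies in one step, whereas your decomposition with $c\circ\alpha-c$ needs the extra rewrite $dW_\alpha R=d(W_\alpha R-RW_\alpha)+(dR)W_\alpha$; both are valid.
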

\begin{proof}
The inclusion $\cK\subset\cZ$ follows from the definition of the algebra $\cZ$.
The inclusion $\cD\subset\cF$ is obvious.

To prove that $\cZ\subset\cD$, it is sufficient to show that all the
generators of $\cZ$ belong to $\cD$. Obviously, $I,S\in\cD$.
Further, $cR\in\cD$ for $c\in SO(\mR_+)$ because $R\in\cA\subset\cD$.
It is well known that $\cK\subset\alg_\cB\{aI,S:a\in C(\overline{\mR}_+)\}$
(see, e.g., \cite[Lemma~8.23]{BK97} for Carleson Jordan curves,
in the present case the proof is analogous). Since
$C(\overline{\mR}_+)\subset SO(\mR_+)$, from the above inclusion
it follows that $\cK\subset\cD$. Thus, $\cZ\subset\cD$.

Let us show $\cF\subset\Lambda$.
By Corollary~\ref{co:comp-commutators-algebra},
$aI,S\in\Lambda$ for $a\in SO(\mR_+)$ and
\begin{equation}\label{eq:D-Lambda-7}
W_\alpha S-SW_\alpha\in\cK,\quad W_\alpha R-RW_\alpha\in\cK.
\end{equation}
Since $\alpha\in SOS(\mR_+)$, from
Lemma~\ref{le:SOS-inverse} we infer that $\alpha_{-1}\in
SOS(\mR_+)$, too. From Lemma~\ref{le:continuous-SOS} and
Theorem~\ref{th:comp-aR} it follows that
\begin{equation}\label{eq:D-Lambda-8}
(c-c\circ\alpha_{-1})R\in\cK.
\end{equation}
Combining the second relation in \eqref{eq:D-Lambda-7} and
relation \eqref{eq:D-Lambda-8}, we get
\begin{equation}\label{eq:D-Lambda-9}
W_\alpha cR-cRW_\alpha
=
W_\alpha(c-c\circ\alpha_{-1})R+c(W_\alpha R-RW_\alpha)\in\cK.
\end{equation}
From \eqref{eq:D-Lambda-7} and
\eqref{eq:D-Lambda-9} it follows that $W_\alpha\in\Lambda$.

Taking into account that $W_\alpha^{-1}=W_{\alpha_{-1}}$ and
repeating the above argument with $\alpha_{-1}\in SOS(\mR_+)$ in
place of $\alpha$ we finally get $W_\alpha^{-1}\in\Lambda$. We
have proved that all the generators of $\cF$ belong to $\Lambda$.
Thus $\cF\subset\Lambda$.
\end{proof}
From the above theorem it follows that the quotient algebras $\cZ^\pi:=\cZ/\cK$,
$\cD^\pi:=\cD/\cK$, and $\Lambda^\pi:=\Lambda/\cK$ are well defined.
Clearly, $\cZ^\pi$ lies in the center of $\Lambda^\pi$. Our next
aim is to describe the maximal ideal space of $\cZ^\pi$. We start with
a description of the maximal ideal space of the bigger algebra $\cD^\pi$,
which is commutative in view of Theorem~\ref{th:comp-commutators}(a).
\subsection{Maximal ideal space of $\cD^\pi$}
\begin{thm}\label{th:maximal-ideal-space-Dpi}
The maximal ideal space $M(\cD^\pi)$ of the commutative Banach algebra
$\cD^\pi$ is homeomorphic to the set
\begin{equation}\label{eq:mis-D-1}
\mathfrak{M}:=\big(M(SO(\mR_+))\times\{-\infty,+\infty\}\big)\cup(\Delta\times\mR),
\end{equation}
where $\Delta$ is given by \eqref{eq:a1}.
\end{thm}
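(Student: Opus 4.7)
The plan is to identify $\cD^\pi$ as being generated by two commuting closed subalgebras and then determine its Gelfand spectrum via this structure.

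First, I would note that $\cD^\pi$ is generated by $\cA^\pi$ together with the image $\fU$ of $SO(\mR_+)$ under the map $a\mapsto(aI)^\pi$ (which is an isometric embedding, since $aI$ is not compact unless $a\equiv 0$, so $\fU\cong SO(\mR_+)$ and $M(\fU)=M(SO(\mR_+))=\Delta\cup\mR_+$). By Corollary~\ref{co:algebra-Api}(a), $M(\cA^\pi)=\overline{\mR}$. By Corollary~\ref{co:comp-commutators-algebra}, the subalgebras $\fU$ and $\cA^\pi$ commute inside $\cD^\pi$, so every character $\chi$ of $\cD^\pi$ is uniquely determined by the pair $(\xi,x)$ consisting of its restrictions $\xi:=\chi|_\fU\in M(SO(\mR_+))$ and $x:=\chi|_{\cA^\pi}\in\overline{\mR}$. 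This gives a canonical continuous injection $M(\cD^\pi)\hookrightarrow M(SO(\mR_+))\times\overline{\mR}$.

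Next I would show the image lies in $\mathfrak{M}$. Fix $a\in C(\overline{\mR}_+)\subset SO(\mR_+)$ with $a(0)=a(\infty)=0$. By Theorem~\ref{th:comp-aR}, $aR\in\cK$, so $(aR)^\pi=0$ in $\cD^\pi$. Evaluating a character $(\xi,x)$ on this coset and using Corollary~\ref{co:algebra-Api}(b) gives $\xi(a)\,r_p(x)=0$. A direct inspection of $r_p(x)=1/\sinh[\pi(x+i/p)]$ shows $r_p(x)\neq 0$ for $x\in\mR$ and $r_p(\pm\infty)=0$. Hence if $x\in\mR$, then $\xi(a)=0$ for every such $a$; but at any interior point $t\in\mR_+$ one can choose a continuous bump vanishing at $0$ and $\infty$ with $a(t)\neq 0$, forcing $\xi\in\Delta$. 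For $x=\pm\infty$ no constraint arises. This proves $M(\cD^\pi)\subseteq\mathfrak{M}$.

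For the reverse inclusion I would apply the Allan--Douglas local principle (Theorem~\ref{th:AllanDouglas}) with central subalgebra $\cA^\pi$, obtaining a fibering of $M(\cD^\pi)$ over $\overline{\mR}$ whose fibers are the maximal ideal spaces of the local algebras $\cD^\pi/\cJ_x^\pi$. At $x=\pm\infty$, Corollary~\ref{co:algebra-Api}(b) shows that all $(R_\beta)^\pi$ map to $0$ and $S^\pi$ maps to $\pm 1$ in the local quotient, so $\cA^\pi$ contributes only scalars and the local algebra is isomorphic to $SO(\mR_+)$ with maximal ideal space $M(SO(\mR_+))$. At $x\in\mR$, the coset $R^\pi$ is mapped to the invertible scalar $r_p(x)\neq 0$ in the local algebra, hence invertible there; combined with $(aR)^\pi=0$ for $a\in C(\overline{\mR}_+)$ vanishing at $0,\infty$, this forces $(aI)^\pi$ itself to vanish locally for every such $a$. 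The image of $\fU$ in the local algebra is then isomorphic to $SO(\mR_+)/I_0$, where $I_0:=\{a\in SO(\mR_+):\lim_{t\to 0}a(t)=\lim_{t\to\infty}a(t)=0\}$, and its maximal ideal space is exactly $\Delta$. Gluing the fibers gives $M(\cD^\pi)=\mathfrak{M}$, and the weak-$*$ topology on $M(\cD^\pi)$ coincides with the natural topology on $\mathfrak{M}$ since Gelfand transforms of generators are continuous in each variable.

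The main obstacle is the fiber analysis at $x\in\mR$: one has to verify that the only relations surviving in $\cD^\pi/\cJ_x^\pi$ are those forcing the image of $\fU$ to factor through $SO(\mR_+)/I_0$, and that no further identifications occur. The key ingredients are the invertibility of $R^\pi$ locally, the compactness result of Theorem~\ref{th:comp-aR}, and the identification $SO(\mR_+)/I_0\cong C(\Delta)$ coming from the $C^*$-algebra structure of $SO(\mR_+)$.
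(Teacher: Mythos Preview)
Your forward inclusion $M(\cD^\pi)\subseteq\mathfrak M$ is correct and essentially equivalent to the paper's: both rest on the compactness of $aR$ when $a$ vanishes at $0$ and $\infty$, though your formulation via $\chi((aR)^\pi)=\xi(a)\,r_p(x)=0$ is pleasantly direct.

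The reverse inclusion is where the gap lies. At $x=\pm\infty$ you assert that the local algebra is \emph{isomorphic} to $SO(\mR_+)$, and at $x\in\mR$ that the image of $\fU$ is \emph{isomorphic} to $SO(\mR_+)/I_0$; but what your arguments actually establish is only that the local algebras are \emph{quotients} of these. Showing there are no further identifications amounts to exhibiting, for each $(\xi,x)\in\mathfrak M$, a character of $\cD^\pi$ restricting to $(\xi,x)$---which is precisely the surjectivity you are trying to prove. Neither the local invertibility of $R^\pi$ nor Theorem~\ref{th:comp-aR} supplies this lower bound; they only impose relations. (You flag the difficulty at finite $x$, but the same circularity is present at $x=\pm\infty$: nothing you have said prevents the local algebra there from being a proper quotient of $SO(\mR_+)$.) The paper avoids this by writing every coset $D^\pi\in\cD^\pi$ explicitly as $(c_+P_+)^\pi+(c_-P_-)^\pi+Y^\pi$ with $c_\pm\in SO(\mR_+)$ and $Y^\pi$ a limit of sums $(c_{n,k}H_{n,k})^\pi$, $H_{n,k}\in\operatorname{id}_\cA\{R\}$ (this uses \eqref{eq:S-R-relation}), and then verifying directly that $D^\pi-(D^\pi)\widehat{\ }(\xi,x)I^\pi$ lies in the ideal $\cN_{\xi,x}^\pi$ generated by $\ker\xi\cup\ker x$, so that $\cN_{\xi,x}^\pi$ has codimension one. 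Your localization framework is a legitimate alternative packaging, but completing it appears to require this same structural decomposition (or an equivalent explicit construction of the characters), so it does not sidestep the main work.
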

\begin{proof}
The proof is similar to that of \cite[Theorem~6.3]{BFK06}.
It is clear that $\cC^\pi:=\{(cI)^\pi:c\in SO(\mR_+)\}$
and $\cA^\pi$ are commutative Banach subalgebras of the algebra $\cD^\pi$
and their maximal ideal spaces can be identified with $M(SO(\mR_+))$ and
$\overline{\mR}$, respectively. Fix $(\xi,x)\in M(SO(\mR_+))\times\overline{\mR}$
and consider the maximal ideals $\{(cI)^\pi:c\in SO(\mR_+),c(\xi)=0\}$
of $\cC^\pi$ and $\{A^\pi\in\cA^\pi:(A^\pi)\widehat{\hspace{2mm}}(x)=0\}$ of
$\cA^\pi$. Let $\cN_{\xi,x}^\pi$ denote the closed two-sided
(not necessarily maximal) ideal of $\cD^\pi$ generated by the above
maximal ideals of $\cC^\pi$ and $\cA^\pi$. Identifying the pair
$(\xi,x)\in M(SO(\mR_+))\times\overline{\mR}$ with the ideal $\cN_{\xi,x}^\pi$
and taking into account \eqref{eq:mis-D-1} and $M(SO(\mR_+))=\mR_+\cup\Delta$,
we see that $M(\cD^\pi)$ is homeomorphic to a subset of $M(SO(\mR_+))\times\overline{\mR}
=\big(\mR_+\times\mR\big)\cup\mathfrak{M}$.

Observe that due to \eqref{eq:S-R-relation} any coset $D^\pi\in\cD^\pi$ is of the form
\begin{equation}\label{eq:mis-D-2}
D^\pi=(c_+P_+)^\pi+(c_-P_-)^\pi+Y^\pi,
\end{equation}
where $c_\pm\in SO(\mR_+)$, $P_\pm=(I\pm S)/2$, and
\begin{equation}\label{eq:mis-D-3}
Y^\pi=\lim_{n\to\infty}\sum_{k=1}^{m_n}(c_{n,k}H_{n,k})^\pi
\end{equation}
with $c_{n,k}\in SO(\mR_+)$, $H_{n,k}\in\operatorname{id}_\cA\{R\}$,
and $m_n\in\mN$.

Fix $(\xi,x)\in\mR_+\times\mR$. Given a coset $D^\pi$ of the form
\eqref{eq:mis-D-2}--\eqref{eq:mis-D-3}, we can choose continuous functions
$\widetilde{c}_\pm$, $\widetilde{c}_{n,k}$ with compact support on $\mR_+$ such that
$\widetilde{c}_\pm(\xi)=c_\pm(\xi)$, $\widetilde{c}_{n,k}(\xi)=c_{n,k}(\xi)$
and operators $\widetilde{H}_\pm,\widetilde{H}_{n,k}\in\operatorname{id}_\cA\{R\}$
such that $(\widetilde{H}_\pm^\pi)\widehat{\hspace{2mm}}(x)=(P_\pm^\pi)\widehat{\hspace{2mm}}(x)$,
$(\widetilde{H}_{n,k}^\pi)\widehat{\hspace{2mm}}(x)=(H_{n,k}^\pi)\widehat{\hspace{2mm}}(x)$.
Then
\[
(c_\pm P_\pm)^\pi=\big[(c_\pm-\widetilde{c}_\pm)P_\pm\big]^\pi+
[\widetilde{c}_\pm(P_\pm-\widetilde{H}_\pm)\big]^\pi+(\widetilde{c}_\pm
\widetilde{H}_\pm)^\pi,
\]
respectively, where the first two terms on the right-hand side belong
to the ideal $\cN^\pi_{\xi,x}$, whereas the terms $(\widetilde{c}_\pm
\widetilde{H}_\pm)^\pi$ are zero by Corollary~\ref{co:comp-aH}. Hence,
$(c_\pm P_\pm)^\pi\in\cN^\pi_{\xi,x}$. Analogously,
$(c_{n,k}H_{n,k})^\pi\in\cN^\pi_{\xi,x}$ for all pairs $(n,k)$. Thus,
$\cN^\pi_{\xi,x}$ for $(\xi,x)\in\mR_+\times\mR$ contains every coset
$D^\pi\in\cD^\pi$ and hence cannot be a maximal ideal. In fact,
it is not even a proper ideal, since it contains the unit $I^\pi$.
Consequently, $M(\cD^\pi)\subset\mathfrak{M}$.

Consider now a point $(\xi,x)\in\mathfrak{M}$. Then $\cN^\pi_{\xi,x}$ is a proper
ideal of $\cD^\pi$ because it does not contain the unit $I^\pi$. Let us
show that the ideal $\cN^\pi_{\xi,x}$ is maximal. Assume the contrary:
the ideal $\cN_{\xi,x}^\pi$ is not maximal. Then there is a maximal
ideal $\widetilde{\cN}_{\xi,x}^\pi$ of $\cD^\pi$ that contains properly
$\cN_{\xi,x}^\pi$. For any $D^\pi\in\cD^\pi$ of the form \eqref{eq:mis-D-2},
we have $D^\pi=(D^\pi)\widehat{\hspace{2mm}}(\xi,x)I^\pi+O_{\xi,x}^\pi$, where
$O_{\xi,x}^\pi\in\cN_{\xi,x}^\pi$,
\begin{align}
(D^\pi)\widehat{\hspace{2mm}}(\xi,x)
&=
c_+(\xi)\frac{1+s_p(x)}{2}+c_-(\xi)\frac{1-s_p(x)}{2}
+(Y^\pi)\widehat{\hspace{2mm}}(\xi,x),
\label{eq:mis-D-4}
\\
(Y^\pi)\widehat{\hspace{2mm}}(\xi,x)
&=
\lim_{n\to\infty}\sum_{k=1}^{m_n}
c_{n,k}(\xi)(H_{n,k}^\pi)\widehat{\hspace{2mm}}(x).
\label{eq:mis-D-5}
\end{align}
Hence every coset $D^\pi\in\widetilde{\cN}_{\xi,x}^\pi\setminus\cN_{\xi,x}^\pi$
is of the form $D^\pi=(\delta I)^\pi+O_{\xi,x}^\pi$ with $\delta\in\mC\setminus\{0\}$
and $O_{\xi,x}^\pi\in\cN_{\xi,x}^\pi$. But then $D^\pi-O_{\xi,x}^\pi=(\delta I)^\pi$ is
invertible in $\cD^\pi$ and this contradicts the maximality of
$\widetilde{\cN}_{\xi,x}^\pi$. Thus $\cN_{\xi,x}^\pi$ is a maximal ideal
of $\cD^\pi$ for $(\xi,x)\in\mathfrak{M}$, and therefore $M(\cD^\pi)=\mathfrak{M}$.
\end{proof}
Theorem~\ref{th:maximal-ideal-space-Dpi} immediately implies the following.
\begin{cor}\label{co:mis-D}
A coset $D^\pi\in\cD^\pi$ given by \eqref{eq:mis-D-2}--\eqref{eq:mis-D-3}
is invertible in the Banach algebra $\cD^\pi$ if and only if the Gelfand
transform $(D^\pi)\widehat{\hspace{2mm}}(\xi,x)$ given by
\eqref{eq:mis-D-4}--\eqref{eq:mis-D-5} does not vanish
for every $(\xi,x)\in\mathfrak{M}$.
\end{cor}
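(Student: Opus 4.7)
The plan is to derive this corollary as an immediate consequence of Gelfand theory applied to Theorem~\ref{th:maximal-ideal-space-Dpi}. First I would note that $\cD^\pi$ is a commutative unital Banach algebra: commutativity follows from Theorem~\ref{th:comp-commutators}(a), since the generators $aI$ (with $a\in SO(\mR_+)$) and $S$ of $\cD$ commute pairwise modulo $\cK$. Hence by the standard Gelfand theorem for commutative unital Banach algebras, a coset $D^\pi\in\cD^\pi$ is invertible if and only if $\chi(D^\pi)\neq 0$ for every character $\chi\in M(\cD^\pi)$.

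Next I would invoke Theorem~\ref{th:maximal-ideal-space-Dpi} to identify $M(\cD^\pi)$ with $\mathfrak{M}$, where each $(\xi,x)\in\mathfrak{M}$ corresponds to the maximal ideal $\cN_{\xi,x}^\pi$ and hence to a unique character $\chi_{\xi,x}$ with $\ker\chi_{\xi,x}=\cN_{\xi,x}^\pi$. By the very construction of $\cN_{\xi,x}^\pi$ in the proof of Theorem~\ref{th:maximal-ideal-space-Dpi}, the character $\chi_{\xi,x}$ acts on the two natural subalgebras of $\cD^\pi$ as $\chi_{\xi,x}((cI)^\pi)=c(\xi)$ for every $c\in SO(\mR_+)$, and $\chi_{\xi,x}(A^\pi)=(A^\pi)\widehat{\hspace{2mm}}(x)$ for every $A\in\cA$, where the Gelfand transform on $\cA^\pi$ is given by Corollary~\ref{co:algebra-Api}.

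It then remains to apply $\chi_{\xi,x}$ termwise to the representation \eqref{eq:mis-D-2}--\eqref{eq:mis-D-3}. Using multiplicativity of $\chi_{\xi,x}$ together with Corollary~\ref{co:algebra-Api}(b), one computes $\chi_{\xi,x}((c_\pm P_\pm)^\pi)=c_\pm(\xi)\cdot(1\pm s_p(x))/2$ for $x\in\mR$, reproducing the first two summands of \eqref{eq:mis-D-4}; and by continuity of $\chi_{\xi,x}$ applied to the norm-convergent sum \eqref{eq:mis-D-3}, one obtains \eqref{eq:mis-D-5}. The only subtlety requiring attention is the distinction between the two strata of $\mathfrak{M}$: at points of $M(SO(\mR_+))\times\{\pm\infty\}$ one must use the limiting values $s_p(\pm\infty)=\pm 1$ and $(H^\pi)\widehat{\hspace{2mm}}(\pm\infty)=0$ for $H\in\operatorname{id}_\cA\{R\}$ from Corollary~\ref{co:algebra-Api}(b),(c), which make the $Y^\pi$-contribution vanish and reduce \eqref{eq:mis-D-4} to $c_+(\xi)$ or $c_-(\xi)$, respectively; while at points of $\Delta\times\mR$ all three summands genuinely contribute. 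Since this is a direct corollary of Theorem~\ref{th:maximal-ideal-space-Dpi}, there is no substantive obstacle; the only work consists in verifying that the character values read off the generators assemble correctly into the formulas \eqref{eq:mis-D-4}--\eqref{eq:mis-D-5}, which is straightforward once the character description is in place.
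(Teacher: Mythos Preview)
Your proposal is correct and matches the paper's approach: the paper states the corollary as an immediate consequence of Theorem~\ref{th:maximal-ideal-space-Dpi} with no further argument, and the character computation you outline is exactly what is carried out inside the proof of that theorem (where it is shown that $D^\pi=(D^\pi)\widehat{\hspace{2mm}}(\xi,x)I^\pi+O_{\xi,x}^\pi$ with $O_{\xi,x}^\pi\in\cN_{\xi,x}^\pi$). Your elaboration on the two strata of $\mathfrak{M}$ is accurate and more explicit than the paper itself.
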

\subsection{Maximal ideal space of $\cZ^\pi$}
Now we are ready to describe the maximal ideal space of $\cZ^\pi$.
\begin{thm}\label{th:maximal-ideal-space-Zpi}
The maximal ideal space $M(\cZ^\pi)$ of the commutative Banach algebra $\cZ^\pi$
is homeomorphic to the set $\{-\infty,+\infty\}\cup(\Delta\times\mR)$.
\end{thm}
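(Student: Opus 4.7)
The plan is to identify characters of $\cZ^\pi$ by combining the description of $M(\cA^\pi)$ from Corollary~\ref{co:algebra-Api} with the compactness results of Corollaries~\ref{co:comp-commutators-algebra} and~\ref{co:comp-aH}. Since $\cZ\subset\cD$ by Theorem~\ref{th:embeddings}, $\cZ^\pi$ is a closed subalgebra of the commutative algebra $\cD^\pi$, hence is itself a commutative unital Banach algebra. I would take an arbitrary character $\chi\in M(\cZ^\pi)$, restrict it to $\cA^\pi\subset\cZ^\pi$, and apply Corollary~\ref{co:algebra-Api}(a) to obtain a unique $x\in\overline{\mR}$ with $\chi(S^\pi)=s_p(x)$ and $\chi(R^\pi)=r_p(x)$ (with the convention $s_p(\pm\infty)=\pm 1$ and $r_p(\pm\infty)=0$).

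The classification then splits into two cases. If $x=\pm\infty$, then $\chi(R^\pi)=0$, and using that $Rc-cR\in\cK$ for every $c\in SO(\mR_+)$ (a consequence of Corollary~\ref{co:comp-commutators-algebra}), one has in $\cZ^\pi$ the identity $(cR)^\pi(cR)^\pi=(c^2R)^\pi R^\pi$. Applying $\chi$ yields $\chi((cR)^\pi)^2=\chi((c^2R)^\pi)\cdot 0=0$, so $\chi((cR)^\pi)=0$ for every $c\in SO(\mR_+)$, and $\chi$ is completely determined by the sign; this produces exactly the two characters labeled $\pm\infty$. If instead $x\in\mR$, then $r:=r_p(x)\ne 0$, and I define $\xi(c):=\chi((cR)^\pi)/r$ for $c\in SO(\mR_+)$. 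The map $\xi$ is plainly linear and continuous with $\xi(1)=1$, and the same compactness of $[c,R]$ allows one to compute $\chi((cR)^\pi)\chi((dR)^\pi)=\chi((cdR^2)^\pi)=\chi((cdR)^\pi)\chi(R^\pi)$, giving $\xi(c)\xi(d)=\xi(cd)$. Hence $\xi\in M(SO(\mR_+))=\mR_+\cup\Delta$.

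The decisive step is showing $\xi\in\Delta$. Corollary~\ref{co:comp-aH} applied to $H=R$ says that $cR\in\cK$ whenever $c\in SO(\mR_+)$ vanishes at both $0$ and $\infty$; in particular $\xi$ vanishes on every compactly supported continuous function in $SO(\mR_+)$. If $\xi$ were evaluation at a point $t_0\in\mR_+$, one could pick such a $c$ with $c(t_0)=1$, contradicting $\xi(c)=0$. Thus $\xi\in M(SO(\mR_+))\setminus\mR_+=\Delta$. Conversely, each pair $(\xi,x)\in\Delta\times\mR$ and each point $\pm\infty$ really does arise from a character of $\cZ^\pi$: for $(\xi,x)\in\Delta\times\mR$ one restricts to $\cZ^\pi$ the evaluation functional on $\cD^\pi$ at the corresponding point of $\mathfrak{M}=M(\cD^\pi)$ from Theorem~\ref{th:maximal-ideal-space-Dpi}, and for $\pm\infty$ one restricts the evaluation functional at any $(\xi_0,\pm\infty)\in\mathfrak{M}$ (the previous step shows that the restriction does not depend on the choice of $\xi_0$).

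For the topology I would equip the set $T:=\{-\infty,+\infty\}\cup(\Delta\times\mR)$ with the quotient topology coming from $\mathfrak{M}$ by collapsing each fiber $M(SO(\mR_+))\times\{\pm\infty\}$ to a single point $\pm\infty$. The restriction map $M(\cD^\pi)\to M(\cZ^\pi)$ is a continuous surjection between compact Hausdorff spaces, hence a closed quotient map, and it factors through $T$; the factorization identifies $T$ with $M(\cZ^\pi)$ as topological spaces. I expect the main obstacle to be the multiplicativity check for $\xi$ together with the careful identification of $\xi\in\Delta$ via Corollary~\ref{co:comp-aH}, since both rely on tracking compact perturbations in products of the generators $(cR)^\pi$ rather than on direct algebraic manipulation.
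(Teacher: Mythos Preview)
Your proof is correct and takes a genuinely different route from the paper's. The paper first notes that every $Z^\pi\in\cZ^\pi$ has the form $c_+P_+^\pi+c_-P_-^\pi+Y^\pi$ with \emph{constant} $c_\pm$, then proves that $\cZ^\pi$ is inverse closed in $\cD^\pi$ by evaluating the Gelfand transform of $Z^\pi(Z^\pi)^{-1}$ at the points $(\xi,\pm\infty)$, and finally obtains the maximal ideals of $\cZ^\pi$ as intersections $\cN^\pi_{\xi,x}\cap\cZ^\pi$. You instead work directly with characters: restrict $\chi\in M(\cZ^\pi)$ to $\cA^\pi$ to locate $x\in\overline{\mR}$, and when $x\in\mR$ recover $\xi$ on $SO(\mR_+)$ via $\xi(c)=\chi((cR)^\pi)/r_p(x)$, using compactness of $[c,R]$ for multiplicativity and Corollary~\ref{co:comp-aH} to force $\xi\in\Delta$. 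Your approach avoids the inverse-closedness detour and is more elementary on the character side, relying on Theorem~\ref{th:maximal-ideal-space-Dpi} only for surjectivity; the paper's approach is shorter but leans more heavily on the structure of $\cD^\pi$ already in hand. Your treatment of the topology via the quotient map $M(\cD^\pi)\to M(\cZ^\pi)$ is a welcome addition that the paper leaves implicit.
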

\begin{proof}
From \eqref{eq:S-R-relation} it easily follows that any coset $Z^\pi\in\cZ^\pi$
is of the form \eqref{eq:mis-D-2} where now $c_\pm$ are complex constants. Hence,
Corollaries~\ref{co:algebra-Api}(b) and \ref{co:mis-D} imply that $Z^\pi$
is invertible in $\cD^\pi$ if and only if
\begin{equation}\label{eq:mis-Z-1}
(Z^\pi)\widehat{\hspace{2mm}}(\xi,x)=
c_+(1+s_p(x))/2+c_-(1-s_p(x))/2
+(Y^\pi)\widehat{\hspace{2mm}}(\xi,x)\ne 0
\end{equation}
for every $(\xi,x)\in\mathfrak{M}$. If \eqref{eq:mis-Z-1} holds, then
$(Z^\pi)^{-1}=(d_+P_+)^\pi+(d_-P_-)^\pi+G^\pi$,
where $d_\pm\in SO(\mR_+)$ and $G^\pi$ is a coset of the form
\eqref{eq:mis-D-3}.

From Corollary~\ref{co:algebra-Api}(b)
we know that $(H_{n,k}^\pi)\widehat{\hspace{2mm}}(\pm\infty)=0$ for every
ope\-rator $H_{n,k}$ in the representation \eqref{eq:mis-D-3}. Hence for every
coset $Y^\pi$ of the form \eqref{eq:mis-D-3}, we deduce from \eqref{eq:mis-D-5} that
\begin{equation}\label{eq:mis-Z-2}
(Y^\pi)\widehat{\hspace{2mm}}(\xi,\pm\infty)=0\quad\mbox{for}\quad \xi\in M(SO(\mR_+)).
\end{equation}

Taking the Gelfand transform of the coset
\[
I^\pi=Z^\pi(Z^\pi)^{-1}=(c_+d_+P_+)^\pi+(c_-d_-P_-)^\pi+Q^\pi,
\]
where $Q^\pi$ is of the form \eqref{eq:mis-D-3}, at the points $(\xi,\pm\infty)$ for
$\xi\in M(SO(\mR_+))$, from Corollary~\ref{co:algebra-Api}(b)
and \eqref{eq:mis-Z-2} we see that $c_\pm d_\pm(\xi)=1$ for
all $\xi\in M(SO(\mR_+))$. Therefore $d_\pm=c_\pm^{-1}
\in\mC\setminus\{0\}$, whence
$(Z^\pi)^{-1}=c_+^{-1}P_+^\pi+c_-^{-1}P_-^\pi+G^\pi\in\cZ^\pi$.
Thus $Z^\pi\in\cZ^\pi$ is invertible in $\cZ^\pi$ if and only if
\eqref{eq:mis-Z-1} holds for all $(\xi,x)\in\mathfrak{M}$.

From \eqref{eq:mis-D-4} and \eqref{eq:mis-Z-2} it follows that
only one element in $M(\cZ^\pi)$ corresponds to any pair in the set
$M(SO(\mR_+))\times\{\pm\infty\}$. We will denote it by $\cI_{\pm\infty}^\pi$.
Let $\cN_{\xi,x}^\pi$ be the maximal ideal of $\cD^\pi$ corresponding
to the point $(\xi,x)\in\mathfrak{M}$.
It is clear that for every $\xi\in M(SO(\mR_+))$ we have
\[
\cI_{\pm\infty}^\pi
=\cN^\pi_{\xi,\pm\infty}\cap\cZ^\pi=\big\{Z^\pi\in\cZ^\pi:(Z^\pi)\widehat{\hspace{2mm}}(\xi,\pm\infty)=0\big\}.
\]
Finally,
\[
\cI_{\xi,x}^\pi=\cN^\pi_{\xi,x}\cap\cZ^\pi=
\big\{Z^\pi\in\cZ^\pi:(Z^\pi)\widehat{\hspace{2mm}}(\xi,x)=0\big\}
\]
is a maximal ideal of the algebra
$\cZ^\pi$ for every $(\xi,x)\in\Delta\times\mR$. Thus,
$M(\cZ^\pi)=\{-\infty,+\infty\}\cup(\Delta\times\mR)$.
\end{proof}
\subsection{Fredholmness of operators of local type}
According to the proof of Theorem~\ref{th:maximal-ideal-space-Zpi},
\begin{align*}
\cI_{\pm\infty}^\pi&:=\operatorname{id}_{\cZ^\pi}
\big\{P_\mp^\pi,(gR)^\pi : \ g\in SO(\mR_+)\big\},
\\
\cI_{\xi,x}^\pi&=\big\{Z^\pi\in\cZ^\pi:
(Z^\pi)\widehat{\hspace{2mm}}(\xi,x)=0\big\}\quad\text{for }\;(\xi,x)\in\Delta\times\mR.
\end{align*}
Further,
let $\cJ_{\pm\infty}^\pi$ and $\cJ_{\xi,x}^\pi$ be the
closed two-sided ideals of the Banach algebra $\Lambda^\pi$
generated by the ideals $\cI^\pi_{\pm\infty}$ and
$\cI^\pi_{\xi,x}$ of the algebra $\cZ^\pi$, respectively,
and put
\[
\Lambda^\pi_{\pm\infty}:=\Lambda^\pi/\cJ^\pi_{\pm\infty},
\quad
\Lambda^\pi_{\xi,x}:=\Lambda^\pi/\cJ^\pi_{\xi,x}
\]
for the corresponding quotient algebras.
\begin{thm}\label{th:localization-realization}
An operator $A\in\Lambda$ is Fredholm on the space $L^p(\mR_+)$ if and only
if the following two conditions are fulfilled:
\begin{itemize}
\item[(i)]
the cosets $A^\pi+\cJ_{\pm\infty}^\pi$ are invertible
in the quotient algebras ${\Lambda}_{\pm\infty}^\pi$,
respectively;

\item[(ii)]
for every $(\xi,x)\in\Delta\times\mR$, the coset
$A^\pi+\cJ_{\xi,x}^\pi$ is invertible in the quotient
algebra ${\Lambda}_{\xi,x}^\pi$.
\end{itemize}
\end{thm}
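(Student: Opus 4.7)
The plan is to apply the Allan--Douglas local principle (Theorem~\ref{th:AllanDouglas}) with $\fA=\Lambda^\pi$ and central subalgebra $\fZ=\cZ^\pi$. The three preliminary facts that need to be in place are: $\Lambda^\pi$ is a unital Banach algebra, $\cZ^\pi$ is a closed subalgebra of $\Lambda^\pi$ that contains the identity coset $I^\pi$, and $\cZ^\pi$ is central in $\Lambda^\pi$. The first two are immediate from Lemma~\ref{le:alg-Lambda} and Theorem~\ref{th:embeddings} (which gives $\cZ\subset\Lambda$, so $\cZ^\pi\subset\Lambda^\pi$). Centrality of $\cZ^\pi$ is the very definition of $\Lambda$: for $A\in\Lambda$ and $C\in\cZ$ one has $AC-CA\in\cK$, so $A^\pi C^\pi=C^\pi A^\pi$ in $\Lambda^\pi$.

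Next I would invoke Lemma~\ref{le:alg-Lambda}(c) to translate the Fredholmness of $A\in\Lambda$ into the invertibility of $A^\pi$ in $\Lambda^\pi$. Theorem~\ref{th:maximal-ideal-space-Zpi} identifies
\[
M(\cZ^\pi)=\{-\infty,+\infty\}\cup(\Delta\times\mR),
\]
and under this identification the maximal ideals of $\cZ^\pi$ are precisely $\cI_{\pm\infty}^\pi$ and $\cI_{\xi,x}^\pi$ for $(\xi,x)\in\Delta\times\mR$, as recorded just before the theorem statement. By construction, the smallest closed two-sided ideals of $\Lambda^\pi$ generated by these maximal ideals of $\cZ^\pi$ are exactly $\cJ_{\pm\infty}^\pi$ and $\cJ_{\xi,x}^\pi$, so the ideals produced by the local principle coincide with those appearing in the statement.

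Applying Theorem~\ref{th:AllanDouglas} then yields that $A^\pi$ is invertible in $\Lambda^\pi$ if and only if $A^\pi+\cJ_{\pm\infty}^\pi$ is invertible in $\Lambda_{\pm\infty}^\pi$ and $A^\pi+\cJ_{\xi,x}^\pi$ is invertible in $\Lambda_{\xi,x}^\pi$ for every $(\xi,x)\in\Delta\times\mR$. Combined with Lemma~\ref{le:alg-Lambda}(c), this is the claim.

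The proof really is a routine application once the machinery is set up; the only place where one might slip is in reconciling the notation, namely in verifying that the ideals $\cJ_{\pm\infty}^\pi$ and $\cJ_{\xi,x}^\pi$ defined abstractly via the Allan--Douglas principle (as the closed two-sided ideals of $\Lambda^\pi$ generated by the maximal ideals of $\cZ^\pi$) agree with the concrete description given before the theorem. This matching was explicitly built into the definitions, so no additional work is required.
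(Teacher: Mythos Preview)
Your proposal is correct and follows essentially the same approach as the paper: invoke Lemma~\ref{le:alg-Lambda}(c) to reduce Fredholmness to invertibility in $\Lambda^\pi$, observe that $\cZ^\pi$ is a closed central subalgebra with maximal ideal space $\{-\infty,+\infty\}\cup(\Delta\times\mR)$ by Theorem~\ref{th:maximal-ideal-space-Zpi}, and apply the Allan--Douglas local principle. The paper's proof is slightly terser but the logic is identical.
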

\begin{proof}
By Lemma~\ref{le:alg-Lambda}(c), the Fredholmness of an operator $A\in\Lambda$
is equivalent to the invertibility of the coset $A^\pi=A+\cK$ in the
quotient algebra $\Lambda^\pi$. By Theorem~\ref{th:maximal-ideal-space-Zpi}, $\cZ^\pi$ is its
central subalgebra, whose maximal ideal space is homeomorphic to the set
$\{-\infty,+\infty\}\cup(\Delta\times\mR)$. Therefore, by the Allan-Douglas
local principle (Theorem~\ref{th:AllanDouglas}), the invertibility of the coset
$A^\pi$ in the quotient algebra $\Lambda^\pi$ is equivalent to the invertibility
of the local representatives $A^\pi+\cJ_{-\infty}^\pi$, $A^\pi+\cJ_{+\infty}^\pi$,
and $A^\pi+\cJ_{\xi,x}^\pi$ in the local algebras $\Lambda_{-\infty}^\pi$,
$\Lambda_{+\infty}^\pi$, and $\Lambda_{\xi,x}^\pi$ for all $(\xi,x)\in\Delta\times\mR$,
respectively.
\end{proof}
\section{Some functions in $C_b(\mR_+,V(\mR))$ and $\cE(\mR_+,V(\mR))$}
\label{sec:functions-in-fC}
\subsection{Functions $s_p$, $r_p$, and slowly oscillating functions}
In this section we will prove that certain functions, playing a
major role in the proof of the sufficiency portion of
Theorem~\ref{th:main}, belong to $\cE(\mR_+,V(\mR))$ or to $C_b(\mR_+,V(\mR))$.
\begin{lem}\label{le:g-sp-rp}
Let $g\in SO(\mR_+)$. Then the functions
\[
\fg(t,x):=g(t),
\quad
\fs_p(t,x):=s_p(x),
\quad
\fr_p(t,x):=r_p(x),
\quad (t,x)\in\mR_+\times\mR,
\]
belong to the algebra $\cE(\mR_+,V(\mR))$.
\end{lem}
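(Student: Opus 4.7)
The plan is to verify, for each of the three functions, the two defining properties of $\cE(\mR_+,V(\mR))$: that it lies in $SO(\mR_+,V(\mR))$ (i.e.\ is a bounded continuous $V(\mR)$-valued function that slowly oscillates at $0$ and $\infty$), and that it satisfies the uniform equicontinuity condition \eqref{eq:SO-V}.

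For $\fg(t,x)=g(t)$, the function $\fg(t,\cdot)$ is constant in $x$, so $\|\fg(t,\cdot)\|_V=|g(t)|$ and $\|\fg(t,\cdot)-\fg(\tau,\cdot)\|_V=|g(t)-g(\tau)|$. Hence $\fg\in C_b(\mR_+,V(\mR))$ because $g\in C_b(\mR_+)$, and
\[
\operatorname{cm}_r^C(\fg)=\max\{|g(t)-g(\tau)|:t,\tau\in[r,2r]\}=\operatorname{osc}(g,[r,2r]),
\]
which tends to $0$ as $r\to 0$ and as $r\to\infty$ since $g\in SO(\mR_+)$. Finally, $\fg^h=\fg$ for every $h$, so \eqref{eq:SO-V} holds trivially.

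For $\fs_p$ and $\fr_p$, I would first show that $s_p,r_p\in V(\mR)$. Both are smooth bounded functions on $\mR$: $s_p(x)=\coth[\pi(x+i/p)]$ has finite limits $\pm 1$ at $\pm\infty$, and $r_p(x)=1/\sinh[\pi(x+i/p)]$ tends to $0$ at $\pm\infty$. Their derivatives $s_p'(x)=-\pi/\sinh^2[\pi(x+i/p)]$ and $r_p'(x)=-\pi\cosh[\pi(x+i/p)]/\sinh^2[\pi(x+i/p)]$ decay exponentially, hence lie in $L^1(\mR)$. Therefore $s_p,r_p\in V(\mR)$ with $V(s_p)=\|s_p'\|_{L^1(\mR)}$ and likewise for $r_p$. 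Since neither $\fs_p$ nor $\fr_p$ depends on $t$, membership in $C_b(\mR_+,V(\mR))$ is immediate and $\operatorname{cm}_r^C(\fs_p)=\operatorname{cm}_r^C(\fr_p)=0$, so the slow oscillation requirement is trivial.

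The only nontrivial step is the condition \eqref{eq:SO-V} for $\fs_p$ and $\fr_p$, which reduces to showing $\|s_p-s_p(\cdot+h)\|_V\to 0$ and $\|r_p-r_p(\cdot+h)\|_V\to 0$ as $|h|\to 0$. For the $L^\infty$-part, this follows from the uniform continuity of $s_p$ and $r_p$ on $\mR$ (a consequence of boundedness together with the $L^1$ estimate $|s_p(x+h)-s_p(x)|\le\|s_p'\|_{L^1}$ combined with dominated convergence, or directly since both functions are uniformly continuous as smooth functions with derivatives vanishing at infinity). For the variation part,
\[
V\bigl(s_p-s_p(\cdot+h)\bigr)=\int_\mR\bigl|s_p'(x)-s_p'(x+h)\bigr|\,dx,
\]
and this tends to $0$ as $|h|\to 0$ because translation is strongly continuous on $L^1(\mR)$ and $s_p'\in L^1(\mR)$. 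The argument for $r_p$ is identical. This is the main (and only) technical point; everything else is essentially definitional. Once these three ingredients are assembled, all three functions lie in $\cE(\mR_+,V(\mR))$.
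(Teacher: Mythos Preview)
Your proof is correct and follows essentially the same approach as the paper: the case of $\fg$ is dismissed as trivial, and for $\fs_p,\fr_p$ the only nontrivial point is \eqref{eq:SO-V}, which is reduced to the strong continuity of translation acting on $s_p,r_p$ in $L^\infty(\mR)$ (via uniform continuity) and on $s_p',r_p'$ in $L^1(\mR)$. The paper's proof is slightly terser but uses exactly the same ingredients.
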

\begin{proof}
The statement is obvious for $\fg$. Let us prove it for $\fs_p$ and $\fr_p$.
Since $\fs_p$ and $\fr_p$ are constant in the first variable,
the only nontrivial property is \eqref{eq:SO-V}. Because $s_p,r_p\in V(\mR)$,
we conclude that $s_p'$ and $r_p'$ belong to $L^1(\mR)$. Taking into account that for
$X=C_b(\mR), L^1(\mR)$ and $f\in X$,
\[
\|f(\cdot+h)-f(\cdot)\|_X\to 0\quad\mbox{as}\quad |h|\to 0
\]
(see, e.g., \cite[Chap.~2, Section~6]{DL93} and also \cite[Chap.~III, Section~2]{S70}
for $L^1(\mR)$), we conclude that
\[
\begin{split}
\sup_{t\in\mR_+}\|\fs_p(t,\cdot)-\fs_p^h(t,\cdot)\|_V
=&
\|s_p(\cdot+h)-s_p(\cdot)\|_{L^\infty(\mR)}
\\
&+
\|s_p'(\cdot+h)-s_p'(\cdot)\|_{L^1(\mR)}\to 0
\end{split}
\]
as $|h|\to 0$. The same property is true with $\fs_p,s_p$, and $s_p'$ replaced by $\fr_p,r_p$, and
$r_p'$, respectively. Thus \eqref{eq:SO-V} holds for $\fs_p$ and $\fr_p$.
\end{proof}
\subsection{A function in the algebra $C_b(\mR_+,V(\mR))$}
We start with the following obvious auxiliary statement.
\begin{lem}\label{le:constants}
For every $j\in\mN\cup\{0\}$, we have
\begin{align}
& C_j^\infty:=\sup_{x\in\mR}|x+i/p|^j|r_p(x)|<\infty,
\label{eq:constants-1}
\\
& C_j^1:=\int_{\mR}|x+i/p|^j|r_p(x)|<\infty,
\label{eq:constants-2}
\\
& M_0:=\sup_{x\in\mR}\big|\pi s_p(x)\big|<\infty.
\label{eq:constants-3}
\end{align}
\end{lem}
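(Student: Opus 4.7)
\bigskip

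\noindent\textbf{Proof proposal.} The plan is to reduce all three estimates to a single explicit formula for the modulus of $\sinh[\pi(x+i/p)]$ and then exploit the exponential decay of $r_p$ at infinity together with the fact that $\sin(\pi/p)>0$ (since $1<p<\infty$ gives $1/p\in(0,1)$).

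First I would record the identity
\[
\bigl|\sinh[\pi(x+i/p)]\bigr|^2=\sinh^2(\pi x)+\sin^2(\pi/p)
\qquad(x\in\mR),
\]
obtained from $\sinh(a+ib)=\sinh a\cos b+i\cosh a\sin b$. Because $\sin(\pi/p)\ne 0$, the right-hand side is bounded below by the positive constant $\sin^2(\pi/p)$, so $\bigl|\sinh[\pi(x+i/p)]\bigr|\ge|\sin(\pi/p)|$ for every $x\in\mR$. Since $r_p(x)=1/\sinh[\pi(x+i/p)]$ (this is the definition of $r_p=r_{p,\pi}$ with the exponential factor reducing to $1$), the function $r_p$ is continuous on $\mR$.

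Next I would establish the exponential decay. The identity above gives $|r_p(x)|\le 1/|\sinh(\pi x)|$ for $|x|$ large, so there exist constants $c,x_0>0$ with $|r_p(x)|\le c\,e^{-\pi|x|}$ for $|x|\ge x_0$. Combined with the continuity (hence boundedness) of $r_p$ on $[-x_0,x_0]$, this proves that $x\mapsto|x+i/p|^j|r_p(x)|$ is bounded on $\mR$, giving $C_j^\infty<\infty$ for every $j\in\mN\cup\{0\}$, and that the same function is integrable on $\mR$ (polynomial times exponential), giving $C_j^1<\infty$.

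Finally, for $M_0$ I would use the analogous identity
\[
\bigl|\coth[\pi(x+i/p)]\bigr|^2
=\frac{\sinh^2(\pi x)+\cos^2(\pi/p)}{\sinh^2(\pi x)+\sin^2(\pi/p)}.
\]
The denominator is bounded below by $\sin^2(\pi/p)>0$, while the numerator is bounded above by $\sinh^2(\pi x)+1$; the quotient tends to $1$ as $|x|\to\infty$ and stays bounded on any compact set, so $s_p$ is bounded on $\mR$ and $M_0<\infty$. There is no real obstacle here: once $\sin(\pi/p)>0$ is noted, everything reduces to elementary hyperbolic-trigonometric estimates, and the exponential decay of $1/\sinh$ dominates any polynomial factor.
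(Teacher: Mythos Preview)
Your argument is correct: the identities $|\sinh[\pi(x+i/p)]|^2=\sinh^2(\pi x)+\sin^2(\pi/p)$ and $|\coth[\pi(x+i/p)]|^2=(\sinh^2(\pi x)+\cos^2(\pi/p))/(\sinh^2(\pi x)+\sin^2(\pi/p))$ together with $\sin(\pi/p)>0$ immediately give continuity, exponential decay of $r_p$, and boundedness of $s_p$, from which all three assertions follow. The paper itself offers no proof at all---it introduces the lemma as ``the following obvious auxiliary statement''---so your write-up supplies exactly the elementary hyperbolic-trigonometric verification that the authors left to the reader.
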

\begin{lem}\label{le:fc}
Suppose $\alpha$ is a slowly oscillating shift and $\omega$ is its exponent
function. The function
\begin{equation}\label{eq:def-c}
\fc(t,x):=e^{i\omega(t)(x+i/p)}r_p(x),
\quad
(t,x)\in\mR_+\times\mR,
\end{equation}
belongs to the algebra $C_b(\mR_+,V(\mR))$.
\end{lem}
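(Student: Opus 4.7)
The plan is to establish the three properties that characterize membership in $C_b(\mR_+, V(\mR))$: namely, that $\fc(t,\cdot) \in V(\mR)$ for each $t$, that $\sup_{t\in\mR_+}\|\fc(t,\cdot)\|_V < \infty$, and that $t\mapsto \fc(t,\cdot)$ is continuous from $\mR_+$ to $V(\mR)$. The key observation that unlocks everything is the differential identity
\[
r_p'(x)
= \frac{d}{dx}\,\frac{1}{\sinh[\pi(x+i/p)]}
= -\pi\,\coth[\pi(x+i/p)]\cdot r_p(x)
= -\pi s_p(x)\,r_p(x),
\]
which lets me replace the derivative $r_p'$ by a bounded multiple of $r_p$ (bounded because $s_p \in L^\infty(\mR)$ via the constant $M_0$ of Lemma~\ref{le:constants}).

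First I will differentiate in $x$ to obtain the compact expression
\[
\partial_x \fc(t,x) = e^{i\omega(t)(x+i/p)}\bigl[i\omega(t)-\pi s_p(x)\bigr]\,r_p(x).
\]
Since $\omega\in SO(\mR_+)\subset C_b(\mR_+)$, the quantities $\|\omega\|_\infty$ and $\sup_t e^{-\omega(t)/p}$ are finite. Combining this with $|e^{i\omega(t)(x+i/p)}|=e^{-\omega(t)/p}$ and the estimate $|i\omega(t)-\pi s_p(x)|\le\|\omega\|_\infty+M_0$ yields
\[
|\partial_x \fc(t,x)| \le e^{-\omega(t)/p}\bigl(\|\omega\|_\infty+M_0\bigr)|r_p(x)|,
\quad
|\fc(t,x)|\le e^{-\omega(t)/p}|r_p(x)|.
\]
Integrating the first bound and taking the supremum of the second, then invoking $C_0^\infty,C_0^1<\infty$ from Lemma~\ref{le:constants}, I get a bound on $\|\fc(t,\cdot)\|_V = \|\fc(t,\cdot)\|_{L^\infty(\mR)}+V(\fc(t,\cdot))$ that is uniform in $t$. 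This handles both membership in $V(\mR)$ and boundedness.

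For continuity in $t$, I fix $t_0\in\mR_+$ and set $\delta:=\omega(t)-\omega(t_0)$, which tends to $0$ as $t\to t_0$ by continuity of $\omega$. Writing
\[
\fc(t,x)-\fc(t_0,x) = \bigl[e^{i\delta(x+i/p)}-1\bigr]\,e^{i\omega(t_0)(x+i/p)}\,r_p(x)
\]
and using the elementary estimate $|e^{i\delta(x+i/p)}-1|\le|\delta|\,|x+i/p|\,e^{|\delta|/p}$ (obtained by integrating $\tfrac{d}{ds}e^{is\delta(x+i/p)}$ over $s\in[0,1]$), I obtain a bound on $\|\fc(t,\cdot)-\fc(t_0,\cdot)\|_{L^\infty(\mR)}$ of order $|\delta|\,C_1^\infty$, which vanishes with $\delta$. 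A similar computation on $\partial_x[\fc(t,\cdot)-\fc(t_0,\cdot)]$, using the identity for $\partial_x\fc$ and separating the telescoping differences $e^{i\omega(t)(x+i/p)}-e^{i\omega(t_0)(x+i/p)}$ and $\omega(t)-\omega(t_0)$, produces an upper bound of the form
\[
|\delta|\,e^{|\delta|/p}e^{-\omega(t_0)/p}\bigl(\|\omega\|_\infty+M_0\bigr)|x+i/p|\,|r_p(x)| + |\delta|\,e^{-\omega(t)/p}|r_p(x)|,
\]
and integration in $x$ (controlled by the finite constants $C_0^1$ and $C_1^1$) gives a bound on $V(\fc(t,\cdot)-\fc(t_0,\cdot))$ that also vanishes as $\delta\to 0$.

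The main obstacle is mostly bookkeeping: one has to make sure that after differentiating in $x$ the resulting expressions remain integrable, and this is where the identity $r_p'=-\pi s_p r_p$ is decisive, because it transfers the problem to estimates on $r_p$ alone, precisely the quantities controlled in Lemma~\ref{le:constants}. No supplementary regularity of $\omega$ is needed beyond boundedness and continuity, which is all that is available from $\omega\in SO(\mR_+)$.
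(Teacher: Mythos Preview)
Your argument is correct and follows essentially the same route as the paper: both use the identity $\partial_x\fc(t,x)=(i\omega(t)-\pi s_p(x))\fc(t,x)$ (equivalently $r_p'=-\pi s_p r_p$) together with the constants of Lemma~\ref{le:constants} to bound $\|\fc(t,\cdot)\|_V$ uniformly, and both control $\fc(t,\cdot)-\fc(\tau,\cdot)$ and its $x$-derivative via the integral representation of $e^{i\omega(t)(x+i/p)}-e^{i\omega(\tau)(x+i/p)}$, yielding an estimate of the form $\|\fc(t,\cdot)-\fc(\tau,\cdot)\|_V\le L|\omega(t)-\omega(\tau)|$. The only cosmetic difference is that the paper uses the uniform constant $e^{M_2/p}$ where you carry $e^{|\delta|/p}$, which makes no difference for the conclusion.
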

\begin{proof}
Through the proof we will assume that $(t,x),(\tau,x)\in\mR_+\times\mR$.
Since $\omega\in SO(\mR_+)$ is real-valued, we have
\begin{equation}\label{eq:fc-1}
M_1:=\sup_{t\in\mR_+}|\omega(t)|,
\quad
M_2:=\sup_{t\in\mR_+}(-\omega(t))<\infty.
\end{equation}
Then
\begin{equation}\label{eq:fc-2}
|e^{i\omega(t)(x+i/p)}|=e^{-\omega(t)/p}\le e^{M_2/p},
\end{equation}
whence due to \eqref{eq:def-c},
\begin{equation}\label{eq:a2}
|\fc(t,x)|\le e^{M_2/p}|r_p(x)|.
\end{equation}
From this estimate and \eqref{eq:constants-1} it follows that
\begin{equation}\label{eq:fc-3}
\|\fc(t,\cdot)\|_{L^\infty(\mR)}\le e^{M_2/p}C_0^\infty.
\end{equation}
It is easy to see that
\begin{equation}\label{eq:fc-4}
\frac{\partial \fc}{\partial x}(t,x)=\big(i\omega(t)-\pi s_p(x)\big)\fc(t,x).
\end{equation}
Hence from \eqref{eq:a2}, \eqref{eq:constants-2}--\eqref{eq:constants-3}
and \eqref{eq:fc-1} we obtain
\begin{equation}\label{eq:fc-5}
V(\fc(t,\cdot))=\int_\mR\left|\frac{\partial\fc}{\partial x}(t,x)\right|dx
\le
(M_1+M_0)e^{M_2/p}C_0^1.
\end{equation}
Further, for $t,\tau\in\mR_+$, we get
\[
\fc(t,x)-\fc(\tau,x)=
i(x+i/p)\left(\int_{\omega(\tau)}^{\omega(t)}e^{i\theta(x+i/p)}d\theta\right)r_p(x).
\]
For every $\theta$ in the segment with the endpoints $\omega(\tau)$ and
$\omega(t)$, we have
\[
|e^{i\theta(x+i/p)}|=e^{-\theta/p}\le e^{M_2/p}.
\]
Hence
\begin{equation}\label{eq:fc-6}
|\fc(t,x)-\fc(\tau,x)|\le e^{M_2/p}|\omega(t)-\omega(\tau)|\,|x+i/p|\,|r_p(x)|.
\end{equation}
From \eqref{eq:constants-1} and \eqref{eq:fc-6} we get for $t,\tau\in\mR_+$,
\begin{equation}\label{eq:fc-7}
\|\fc(t,\cdot)-\fc(\tau,\cdot)\|_{L^\infty(\mR)}
\le
C_1^\infty e^{M_2/p}|\omega(t)-\omega(\tau)|.
\end{equation}
From \eqref{eq:fc-4} it follows that
\begin{align}
\frac{\partial\fc}{\partial x}(t,x)-\frac{\partial\fc}{\partial x}(\tau,x)
&=
i\big(\omega(t)-\omega(\tau)\big)\fc(t,x)
\nonumber
\\
&\quad+
\big(i\omega(\tau)-\pi s_p(x)\big)\big(\fc(t,x)-\fc(\tau,x)\big).
\label{eq:fc-8}
\end{align}
Combining \eqref{eq:constants-3} and \eqref{eq:fc-1} with inequalities
\eqref{eq:a2} and \eqref{eq:fc-6}, we deduce from \eqref{eq:fc-8} that
\[
\begin{split}
\left|\frac{\partial\fc}{\partial x}(t,x)-\frac{\partial\fc}{\partial x}(\tau,x)\right|
\le&
e^{M_2/p}|\omega(t)-\omega(\tau)|
\\
&
\times
\big(|r_p(x)|+(M_1+M_0)|x+i/p|\,|r_p(x)|\big).
\end{split}
\]
Therefore, taking into account \eqref{eq:constants-2} we infer from the latter
inequality that for $t,\tau\in\mR_+$,
\begin{align}
V(\fc(t,\cdot)-\fc(\tau,\cdot))
&=
\int_\mR\left|\frac{\partial\fc}{\partial x}(t,x)-\frac{\partial\fc}{\partial x}(\tau,x)\right|dx
\nonumber\\
&\le
(C_0^1+(M_1+M_0)C_1^1)e^{M_2/p}|\omega(t)-\omega(\tau)|.
\label{eq:fc-9}
\end{align}
Combining \eqref{eq:fc-7} and \eqref{eq:fc-9}, we arrive at the estimate
\[
\|\fc(t,\cdot)-\fc(\tau,\cdot)\|_V\le L|\omega(t)-\omega(\tau)|
\quad(t,\tau\in\mR_+),
\]
where $L:=(C_1^\infty+C_0^1+(M_1+M_0)C_1^1)e^{M_2/p}$. This inequality implies that
$\fc$ is a continuous $V(\mR)$-valued function. Moreover, it is bounded
in view of \eqref{eq:fc-3} and \eqref{eq:fc-5}. Thus $\fc\in C_b(\mR_+,V(\mR))$.
\end{proof}
\subsection{Key lemma}
The main result of this section is the following.
\begin{lem}\label{le:bxi}
Suppose $\alpha$ is a slowly oscillating shift and $\omega$ is its
exponent function. Let $\xi\in\Delta$ and
\[
\fa(t,x):=e^{i\omega(t)(x+i/p)}\big(r_p(x)\big)^2, \quad
(t,x)\in(\mR_+\times\mR)\cup(\Delta\times\mR).
\]
Then there exists a function $\fb_\xi\in\cE(\mR_+,V(\mR))$ such that
\begin{equation}\label{eq:a3}
\fa(t,x)-\fa(\xi,x)=\big(\omega(t)-\omega(\xi)\big)r_p(x)\fb_\xi(t,x),
\quad (t,x)\in\mR_+\times\mR.
\end{equation}
\end{lem}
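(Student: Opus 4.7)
The natural approach is to pull the factor $\omega(t)-\omega(\xi)$ out of the exponential via the identity $e^{z_1}-e^{z_0}=(z_1-z_0)\int_0^1 e^{z_0+s(z_1-z_0)}\,ds$. Setting
\[
\theta_s(t):=\omega(\xi)+s\big(\omega(t)-\omega(\xi)\big),\qquad s\in[0,1],
\]
(note that $\omega(\xi)\in\mR$, since characters of the $C^*$-algebra $SO(\mR_+)$ are real on the self-adjoint element $\omega$), one computes
\[
\fa(t,x)-\fa(\xi,x)=i(x+i/p)\big(\omega(t)-\omega(\xi)\big)\big(r_p(x)\big)^2\int_0^1 e^{i\theta_s(t)(x+i/p)}\,ds,
\]
which forces us to define
\[
\fb_\xi(t,x):=i(x+i/p)\,r_p(x)\int_0^1 e^{i\theta_s(t)(x+i/p)}\,ds.
\]
Then \eqref{eq:a3} holds by construction, and the entire content of the lemma is the assertion $\fb_\xi\in\cE(\mR_+,V(\mR))$.

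The uniform $V(\mR)$-bound on $\fb_\xi(t,\cdot)$ mirrors the argument used for $\fc$ in Lemma~\ref{le:fc}. Since $|\theta_s(t)|\le M_1$ and $|e^{i\theta_s(t)(x+i/p)}|=e^{-\theta_s(t)/p}\le e^{M_2/p}$, the pointwise estimate $|\fb_\xi(t,x)|\le e^{M_2/p}|x+i/p|\,|r_p(x)|$ together with \eqref{eq:constants-1} gives $\|\fb_\xi(t,\cdot)\|_{L^\infty(\mR)}\le C_1^\infty e^{M_2/p}$. Differentiating under the integral using the identity $r_p'(x)=-\pi s_p(x)r_p(x)$ yields
\[
\frac{\partial\fb_\xi}{\partial x}(t,x)
=i\!\int_0^1\!\Big[r_p(x)-\pi(x+i/p)s_p(x)r_p(x)+i\theta_s(t)(x+i/p)r_p(x)\Big]e^{i\theta_s(t)(x+i/p)}ds,
\]
which, together with \eqref{eq:constants-2}--\eqref{eq:constants-3}, gives
\[
V\big(\fb_\xi(t,\cdot)\big)\le e^{M_2/p}\big(C_0^1+(M_0+M_1)C_1^1\big).
\]

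To establish that $\fb_\xi$ slowly oscillates at $0$ and $\infty$ in the $V(\mR)$-norm, apply the exponential-difference identity once more to the increment $\theta_s(t)-\theta_s(\tau)=s(\omega(t)-\omega(\tau))$. This picks up one additional factor of $(x+i/p)$ in the integrand, and after mirroring the estimates of Lemma~\ref{le:fc} (invoking $C_2^\infty$, $C_2^1$, and $C_3^1$ in place of $C_1^\infty$, $C_0^1$, and $C_1^1$), one obtains
\[
\|\fb_\xi(t,\cdot)-\fb_\xi(\tau,\cdot)\|_V\le C\,|\omega(t)-\omega(\tau)|\qquad(t,\tau\in\mR_+),
\]
so that $\fb_\xi\in C_b(\mR_+,V(\mR))$ and $\fb_\xi\in SO(\mR_+,V(\mR))$ follow from $\omega\in SO(\mR_+)$. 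For the translation-continuity condition \eqref{eq:SO-V}, write
\[
\fb_\xi(t,x+h)-\fb_\xi(t,x)=T_1(t,x,h)+T_2(t,x,h),
\]
where $T_1$ collects the shift of $(x+i/p)r_p(x)$ with the exponential left at argument $x$, and $T_2$ isolates the factor $e^{i\theta_s(t)h}-1$; the latter satisfies the uniform bound $|e^{i\theta_s(t)h}-1|\le M_1|h|$, while the former reduces to translation continuity of the fixed $V(\mR)$-functions $(x+i/p)r_p(x)$ and its derivative in the $L^\infty$- and $L^1$-norms respectively (as in Lemma~\ref{le:g-sp-rp}). Uniformity in $t$ follows because the $t$-dependence enters only through the bounded factor $e^{i\theta_s(t)(x+i/p)}$.

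The main obstacle is purely bookkeeping: the extra polynomial factor $(x+i/p)$ in $\fb_\xi$ (compared with $\fc$) forces the use of one higher moment of $r_p$ at every step, but since all moments $C_j^1, C_j^\infty$ are finite by Lemma~\ref{le:constants}, the exponential decay of $r_p$ keeps everything under control; the delicate point is the uniformity in $t$ of \eqref{eq:SO-V}, which rests on the boundedness of $\omega$.
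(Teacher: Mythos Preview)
Your proposal is correct and follows essentially the same route as the paper: the function $\fb_\xi$ you write down coincides with the paper's \eqref{eq:bxi-1.2}--\eqref{eq:bxi-1.3}, and the membership $\fb_\xi\in\cE(\mR_+,V(\mR))$ is verified by the same hierarchy of moment estimates against $C_j^\infty$, $C_j^1$. The only notable technical difference is that the paper integrates by parts once to obtain the closed recursion $\partial_x\fb_\xi=(-\pi s_p+i\omega(\xi))\fb_\xi+i\fc$ (see \eqref{eq:bxi-3.2}), which it then iterates to a second-derivative bound and handles \eqref{eq:SO-V} via Fubini, whereas you differentiate under the integral directly and treat \eqref{eq:SO-V} by a $T_1+T_2$ splitting; both work, though in your $T_1$ term the cross contribution from $\partial_x$ hitting the exponential also requires $L^1$-translation continuity of $(x+i/p)r_p(x)$ itself (not only of its derivative), which you should mention explicitly.
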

\begin{proof}
We divide the proof into ten steps. Through the proof we will assume that
$(t,x),(\tau,x)\in\mR_+\times\mR$ and all estimates are uniform in $t,\tau,x$.

\medskip
\noindent
\textbf{1. Definition of $\fb_\xi$.}
Consider
\[
\begin{split}
&
e^{i\omega(t)(x+i/p)}-e^{i\omega(\xi)(x+i/p)}
=
i(x+i/p)\int_{\omega(\xi)}^{\omega(t)}e^{i\theta(x+i/p)}d\theta
\\
&
\quad=
\big(\omega(t)-\omega(\xi)\big)i(x+i/p)
\int_0^1\cE_\xi(x,\omega(t),y)dy,
\end{split}
\]
where
\[
\cE_\xi(x,\theta,y):=e^{i[\omega(\xi)+y(\theta-\omega(\xi))](x+i/p)},
\quad
(x,\theta,y)\in\mR\times\mR\times[0,1].
\]
Then we obtain \eqref{eq:a3} with
\begin{equation}\label{eq:bxi-1.2}
\fb_\xi(t,x):=r_p(x)\fe_\xi(t,x),
\end{equation}
\begin{equation}\label{eq:bxi-1.3}
\fe_\xi(t,x):= i(x+i/p)\int_0^1\cE_\xi(x,\omega(t),y)dy.
\end{equation}

\noindent
\textbf{2. Uniform estimate for $\fb_\xi(t,x)$.}
From Lemma~\ref{le:SO-fundamental-property} we obtain $-\omega(\xi)\le M_2$,
where $M_2$ is defined by \eqref{eq:fc-1}.
Therefore, for $\theta=\omega(\xi)+y(\omega(t)-\omega(\xi))$ and $y\in[0,1]$,
\[
|\cE_\xi(x,\omega(t),y)|=|e^{i\theta(x+i/p)}|=e^{-\theta/p}\le e^{M_2/p}.
\]
This estimate and \eqref{eq:bxi-1.2}--\eqref{eq:bxi-1.3} imply that
\begin{equation}\label{eq:bxi-2.1}
|\fb_\xi(t,x)|\le e^{M_2/p}|x+i/p|\,|r_p(x)|.
\end{equation}

\noindent
\textbf{3. Uniform estimate for $\frac{\partial\fb_\xi}{\partial x}(t,x)$.}
Differentiating \eqref{eq:bxi-1.3} we get
\[
\begin{split}
\frac{\partial\fe_\xi}{\partial x}(t,x)
=&
i\int_0^1\cE_\xi(x,\omega(t),y)dy
\\
&
+i(x+i/p)\int_0^1 i\big[\omega(\xi)+y\big(\omega(t)-\omega(\xi)\big)\big]
\cE_\xi(x,\omega(t),y)dy.
\end{split}
\]
Integrating by parts the first integral and splitting the second
integral into two integrals, we get
\begin{equation}
\frac{\partial\fe_\xi}{\partial x}(t,x)=ie^{i\omega(t)(x+i/p)}+i\omega(\xi)\fe_\xi(t,x).
\label{eq:bxi-3.1}
\end{equation}
Taking into account \eqref{eq:bxi-1.2}, the definition of $r_p$,
\eqref{eq:bxi-3.1} and \eqref{eq:def-c}, we obtain
\begin{align}
\frac{\partial\fb_\xi}{\partial x}(t,x)
&=
\frac{dr_p}{dx}(x)\fe_\xi(t,x)+r_p(x)\frac{\partial\fe_\xi}{\partial x}(t,x)
\nonumber
\\
&=\big(-\pi s_p(x)+i\omega(\xi)\big)\fb_\xi(t,x)+i\fc(t,x).
\label{eq:bxi-3.2}
\end{align}
From \eqref{eq:bxi-3.2}, \eqref{eq:constants-3}, \eqref{eq:fc-1},
\eqref{eq:a2} and \eqref{eq:bxi-2.1} it follows that
\begin{equation}\label{eq:bxi-3.3}
\left|\frac{\partial\fb_\xi}{\partial x}(t,x)\right|
\le
M_3|x+i/p|\,|r_p(x)|+e^{M_2/p}|r_p(x)|,
\end{equation}
where $M_3:=(M_0+M_1)e^{M_2/p}$.

\medskip
\noindent
\textbf{4. Uniform estimate for $\frac{\partial^2\fb_\xi}{\partial x^2}(t,x)$.}
From \eqref{eq:bxi-3.2} and \eqref{eq:fc-4} it follows that
\begin{align}
\frac{\partial^2\fb_\xi}{\partial x^2}(t,x)
=&
\big(\big(\pi r_p(x)\big)^2+\big(-\pi s_p(x)+i\omega(\xi)\big)^2\big)\fb_\xi(t,x)
\nonumber
\\
&+
\big(-2\pi s_p(x)+i\omega(\xi)+i\omega(t)\big)
i\fc(t,x).
\label{eq:bxi-4.1}
\end{align}
Taking into account \eqref{eq:constants-1}, \eqref{eq:constants-3} and
\eqref{eq:fc-1}, we infer from \eqref{eq:bxi-4.1},
\eqref{eq:a2}, and \eqref{eq:bxi-2.1} that
\begin{equation}\label{eq:bxi-4.2}
\left|\frac{\partial^2\fb_\xi}{\partial x^2}(t,x)\right|
\le
M_4|x+i/p|\,|r_p(x)|+2M_3|r_p(x)|,
\end{equation}
where $M_4:=((\pi C_0^\infty)^2+(M_0+M_1)^2)e^{M_2/p}$.

\medskip
\noindent
\textbf{5. Uniform estimate for $\fb_\xi(t,x)-\fb_\xi(\tau,x)$.}
For $y\in[0,1]$ we have
\[
\cE_\xi(x,\omega(t),y)-\cE_\xi(x,\omega(\tau),y)
=
i(x+i/p)y\int_{\omega(\tau)}^{\omega(t)}\cE_\xi(x,\theta,y)d\theta.
\]
Then, taking into account \eqref{eq:bxi-1.3}, we get
\[
\fe_\xi(t,x) -\fe_\xi(\tau,x)=-(x+i/p)^2
\int_0^1\left(y\int_{\omega(\tau)}^{\omega(t)}\cE_\xi(x,\theta,y)d\theta\right)dy.
\]
As in Step~2, we obtain
\[
|\cE_\xi(x,\theta,y)|=
|e^{i\psi(x+i/p)}|=e^{-\psi/p}\le e^{M_2/p}
\]
for $\psi=\omega(\xi)+y(\theta-\omega(\xi))$ with $y\in[0,1]$ and $\theta$
in the segment with the endpoints $\omega(\tau)$ and $\omega(t)$. Therefore
\[
\begin{split}
|\fe_\xi(t,x)-\fe_\xi(\tau,x)|
&\le
|x+i/p|^2\int_0^1 ye^{M_2/p}|\omega(t)-\omega(\tau)|dy
\\
&=(e^{M_2/p}/2)|\omega(t)-\omega(\tau)|\,|x+i/p|^2.
\end{split}
\]
Thus, by \eqref{eq:bxi-1.2},
\begin{equation}\label{eq:bxi-5.1}
|\fb_\xi(t,x)-\fb_\xi(\tau,x)|
\le
e^{M_2/p}|\omega(t)-\omega(\tau)|\,|x+i/p|^2|r_p(x)|.
\end{equation}

\noindent
\textbf{6. Uniform estimate for
$\frac{\partial\fb_\xi}{\partial x}(t,x)-\frac{\partial\fb_\xi}{\partial x}(\tau,x)$.}
From \eqref{eq:bxi-3.2} it follows that
\begin{align}
\frac{\partial\fb_\xi}{\partial x}(t,x)-\frac{\partial\fb_\xi}{\partial x}(\tau,x)
=&
\big(-\pi s_p(x)+i\omega(\xi)\big)\big(\fb_\xi(t,x)-\fb_\xi(\tau,x)\big)
\nonumber
\\
&+i\big(\fc(t,x)-\fc(\tau,x)\big).
\label{eq:bxi-6.1}
\end{align}
Taking into account \eqref{eq:constants-3}, \eqref{eq:fc-1}, from
\eqref{eq:bxi-6.1}, \eqref{eq:bxi-5.1} and \eqref{eq:fc-6} we obtain
\begin{align}
\left|\frac{\partial\fb_\xi}{\partial x}(t,x)-\frac{\partial\fb_\xi}{\partial x}(\tau,x)\right|
\le&
\big(M_3|x+i/p|^2|r_p(x)|+e^{M_2/p}|x+i/p|\,|r_p(x)|\big)
\nonumber
\\
&\times
|\omega(t)-\omega(\tau)|.
\label{eq:bxi-6.2}
\end{align}

\noindent
\textbf{7. Uniform estimate for $\fb_\xi(t,x+h)-\fb_\xi(t,x)$.}
From \eqref{eq:bxi-3.3} and \eqref{eq:constants-1} we
obtain for $h\in\mR$,
\begin{align}
|\fb_\xi &(t,x+h)-\fb_\xi(t,x)|
=
\left|\int_x^{x+h}\frac{\partial\fb_\xi}{\partial y}(t,y)dy\right|
\nonumber
\\
&\le
M_3\left|\int_x^{x+h}|y+i/p|\,|r_p(y)|dy\right|+e^{M_2/p}\left|\int_x^{x+h}|r_p(y)|dy\right|
\nonumber
\\
&\le M_5|h|,
\label{eq:bxi-7.1}
\end{align}
where $M_5:=M_3C_1^\infty+e^{M_2/p}C_0^\infty$.

\medskip
\noindent
\textbf{8. Proof of $\fb_\xi\in C_b(\mR_+,V(\mR))$.}
From \eqref{eq:bxi-2.1} and \eqref{eq:constants-1} it follows that
\begin{equation}\label{eq:bxi-8.1}
\|\fb_\xi(t,\cdot)\|_{L^\infty(\mR)}\le e^{M_2/p}C_1^\infty.
\end{equation}
Further, from \eqref{eq:bxi-3.3} and \eqref{eq:constants-2} we get
\begin{equation}\label{eq:bxi-8.2}
V(\fb_\xi(t,\cdot))=
\int_\mR\left|\frac{\partial\fb_\xi}{\partial x}(t,x)\right|dx
\le
M_3C_1^1+e^{M_2/p}C_0^1.
\end{equation}
Combining \eqref{eq:bxi-8.1} and \eqref{eq:bxi-8.2}, we obtain
\begin{align}
\|\fb_\xi\|_{C_b(\mR_+,V(\mR))}
&=
\sup_{t\in\mR}\big(\|\fb_\xi(t,\cdot)\|_{L^\infty(\mR)}+V(\fb_\xi(t,\cdot))\big)
\nonumber\\
&\le e^{M_2/p}C_1^\infty+M_3C_1^1+e^{M_2/p}C_0^1<\infty.
\label{eq:bxi-8.3}
\end{align}
From \eqref{eq:constants-1} and \eqref{eq:bxi-5.1} we see that
\begin{equation}\label{eq:bxi-8.4}
\|\fb_\xi(t,\cdot)-\fb_\xi(\tau,\cdot)\|_{L^\infty(\mR)}
\le
e^{M_2/p}C_2^\infty|\omega(t)-\omega(\tau)|.
\end{equation}
Further, from \eqref{eq:bxi-6.2} and \eqref{eq:constants-2} it follows that
for all $t,\tau\in\mR_+$,
\begin{align}
V(\fb_\xi(t,\cdot)-\fb_\xi(\tau,\cdot))
&=
\int_\mR\left|
\frac{\partial\fb_\xi}{\partial x}(t,x)-\frac{\partial\fb_\xi}{\partial x}(\tau,x)
\right|dx
\nonumber
\\
&\le
(M_3C_2^1+e^{M_2/p}C_1^1)|\omega(t)-\omega(\tau)|.
\label{eq:bxi-8.5}
\end{align}
Combining \eqref{eq:bxi-8.4} and \eqref{eq:bxi-8.5} we see that for all
$t,\tau\in\mR_+$,
\[
\|\fb_\xi(t,\cdot)-\fb_\xi(\tau,\cdot)\|_V\le M_6|\omega(t)-\omega(\tau)|,
\]
where $M_6:=e^{M_2/p}C_2^\infty+M_3C_2^1+e^{M_2/p}C_1^1$. From this inequality
it follows that $\fb_\xi$ is a continuous $V(\mR)$-valued function. Moreover,
it is bounded in view of \eqref{eq:bxi-8.3}. Thus, $\fb_\xi\in C_b(\mR_+,V(\mR))$.

\medskip
\noindent
\textbf{9. Proof of $\fb_\xi\in SO(\mR_+,V(\mR))$.}
Estimate \eqref{eq:bxi-8.4} immediately implies that
\[
\operatorname{cm}_r^C(\fb_\xi)
\le
e^{M_2/p}C_2^\infty
\operatorname{osc}(\omega,[r,2r]),
\quad r\in\mR_+.
\]
Since $\omega\in SO(\mR_+)$, from this estimate we obtain
\[
\lim_{r\to s}\operatorname{cm}_r^C(\fb_\xi)
=
\lim_{r\to s}\operatorname{osc}(\omega,[r,2r])=0
\quad
(s\in\{0,\infty\}).
\]
Thus, taking into account the result of Step~8, we conclude that
$\fb_\xi$ belongs to the algebra $SO(\mR_+,V(\mR))$.

\medskip
\noindent
\textbf{10. Proof of $\fb_\xi\in\cE(\mR_+,V(\mR))$.}
From \eqref{eq:bxi-7.1} it follows that for $h\in\mR$,
\begin{equation}\label{eq:bxi-10.1}
\sup_{t\in\mR_+}\|\fb_\xi(t,\cdot)-\fb_\xi^h(t,\cdot)\|_{L^\infty(\mR)}
\le M_5|h|.
\end{equation}
On the other hand, from \eqref{eq:bxi-4.2} we obtain for $t\in\mR_+$ and
$h>0$,
\begin{align}
V(\fb_\xi & (t,\cdot)-\fb_\xi^h(t,\cdot))
=\int_\mR\left|
\frac{\partial\fb_\xi}{\partial x}(t,x+h)-\frac{\partial\fb_\xi}{\partial x}(t,x)
\right|dx
\nonumber
\\
&=
\int_\mR\left|\int_x^{x+h}\frac{\partial^2\fb_\xi}{\partial y^2}(t,y)dy\right|dx
\le
\int_\mR\int_x^{x+h}\left|\frac{\partial^2\fb_\xi}{\partial y^2}(t,y)\right|dy\,dx
\nonumber
\\
&\le M_4\int_\mR\int_x^{x+h}|y+i/p|\,|r_p(y)|dy\,dx
+2M_3\int_\mR\int_x^{x+h}|r_p(y)|dy\,dx.
\label{eq:bxi-10.2}
\end{align}
Changing the order of integration and taking into account
\eqref{eq:constants-2}, we get for $h\in\mR$ and $j\in\{0,1\}$,
\begin{align}
\int_\mR  \int_x^{x+h}  |y+i/p|^j|r_p(y)|dy\,dx
&=
\int_\mR\int_{y-h}^y|y+i/p|^j|r_p(y)|dx\,dy
\nonumber\\
&=
h\int_\mR|y+i/p|^j|r_p(y)|dy
=
C_j^1 h.
\label{eq:bxi-aux}
\end{align}
Combining \eqref{eq:bxi-10.2} and \eqref{eq:bxi-aux}, we see that
\begin{equation}\label{eq:bxi-10.3}
V(\fb_\xi(t,\cdot)-\fb_\xi^h(t,\cdot))\le M_7h \quad (h>0),
\end{equation}
where $M_7:=M_4C_1^1+2M_3C_0^1$. Analogously it can be shown that
\begin{equation}\label{eq:bxi-10.4}
V(\fb_\xi(t,\cdot)-\fb_\xi^h(t,\cdot))\le M_7(-h)\quad (h<0).
\end{equation}
From \eqref{eq:bxi-10.3} and \eqref{eq:bxi-10.4} we get for $h\in\mR$,
\begin{equation}\label{eq:bxi-10.5}
\sup_{t\in\mR_+}V\big(\fb_\xi(t,\cdot)-\fb_\xi^h(t,\cdot)\big)\le M_7|h|.
\end{equation}
Combining \eqref{eq:bxi-10.1} with \eqref{eq:bxi-10.5}, we arrive at the
equality
\begin{equation}\label{eq:bxi-10.6}
\lim_{|h|\to 0}\sup_{t\in\mR_+}\|\fb_\xi(t,\cdot)-\fb_\xi^h(t,\cdot)\|_V=0.
\end{equation}
From Step~9 and equality \eqref{eq:bxi-10.6} it finally follows that
$\fb_\xi\in\cE(\mR_+,V(\mR))$.
\end{proof}
\section{Sufficient conditions for Fredholmness}\label{sec:sufficiency}
\subsection{Invertibility in the quotient algebras $\Lambda_{+\infty}^\pi$
and $\Lambda_{-\infty}^\pi$}
\begin{thm}\label{th:suf-i}
Suppose $a,b,c,d\in SO(\mR_+)$, $\alpha\in SOS(\mR_+)$, and the operator $N$
is given by \eqref{eq:def-N}.
\begin{enumerate}
\item[{\rm(a)}]
If the operator $A_+:=aI-bW_\alpha$ is invertible on the space
$L^p(\mR_+)$, then the coset $N^\pi+\cJ_{+\infty}^\pi$ is
invertible in the quotient algebra $\Lambda_{+\infty}^\pi$.

\item[{\rm(b)}]
If the operator $A_-:=cI-dW_\alpha$ is invertible on the space
$L^p(\mR_+)$, then the coset $N^\pi+\cJ_{-\infty}^\pi$ is
invertible in the quotient algebra $\Lambda_{-\infty}^\pi$.
\end{enumerate}
\end{thm}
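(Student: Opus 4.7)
The plan is to observe that the local ideals $\cJ_{\pm\infty}^\pi$ absorb one of the projections $P_\pm$, so that the coset $N^\pi$ collapses to the coset of the corresponding functional operator $A_\pm$, which is invertible by hypothesis. The crucial algebraic fact to keep straight is that $\cI_{+\infty}^\pi$ is generated by $P_-^\pi$ and $(gR)^\pi$ with $g\in SO(\mR_+)$ (and dually for $-\infty$), so $P_-^\pi\in\cJ_{+\infty}^\pi$ and $P_+^\pi\in\cJ_{-\infty}^\pi$.

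For part (a), I first note that since $\cJ_{+\infty}^\pi$ is a two-sided ideal of $\Lambda^\pi$ containing $P_-^\pi$, both cosets $[(aI-bW_\alpha)P_-]^\pi$ and $[(cI-dW_\alpha)P_-]^\pi$ lie in $\cJ_{+\infty}^\pi$. Using $P_+=I-P_-$, this gives
\[
N^\pi+\cJ_{+\infty}^\pi
=
[(aI-bW_\alpha)P_+]^\pi+[(cI-dW_\alpha)P_-]^\pi+\cJ_{+\infty}^\pi
=
A_+^\pi+\cJ_{+\infty}^\pi.
\]
Hence it suffices to show that $A_+^\pi$ is invertible already in $\Lambda^\pi$.

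To do this I use Theorem~\ref{th:FO}: the invertibility of $A_+=aI-bW_\alpha$ on $L^p(\mR_+)$ means that one of the conditions \eqref{eq:FO-1}, \eqref{eq:FO-2} holds, and in either case $A_+^{-1}$ is given by the norm-convergent Neumann-type series \eqref{eq:FO-3} or \eqref{eq:FO-4}. Every partial sum of such a series is a finite algebraic combination of the generators $aI$, $bI$, $W_\alpha$, $W_\alpha^{-1}$ of $\cF$, and $\cF\subset\Lambda$ by Theorem~\ref{th:embeddings}. Since $\Lambda$ is closed in $\cB$ by Lemma~\ref{le:alg-Lambda}(a), the limit $A_+^{-1}$ also belongs to $\Lambda$. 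Consequently $(A_+^{-1})^\pi$ is a two-sided inverse of $A_+^\pi$ in $\Lambda^\pi$, and its image in the quotient $\Lambda_{+\infty}^\pi$ is a two-sided inverse of $A_+^\pi+\cJ_{+\infty}^\pi=N^\pi+\cJ_{+\infty}^\pi$.

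Part (b) is verbatim the same argument with $+\infty$ replaced by $-\infty$ throughout: one uses that $P_+^\pi\in\cI_{-\infty}^\pi\subset\cJ_{-\infty}^\pi$ to reduce $N^\pi+\cJ_{-\infty}^\pi$ to $A_-^\pi+\cJ_{-\infty}^\pi$, and then invokes Theorem~\ref{th:FO} for $A_-=cI-dW_\alpha$. I do not expect any serious obstacle, since all the hard analytic work—convergence of the inverse series, closedness of $\Lambda$, the defining generators of $\cJ_{\pm\infty}^\pi$—is already packaged in previous results; the only care required is to match the sign conventions of the local ideals with the correct projection that gets annihilated.
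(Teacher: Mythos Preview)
Your proof is correct and follows essentially the same approach as the paper's: both arguments reduce $N^\pi+\cJ_{+\infty}^\pi$ to $A_+^\pi+\cJ_{+\infty}^\pi$ using $P_-^\pi\in\cI_{+\infty}^\pi\subset\cJ_{+\infty}^\pi$, and then show $A_+^{-1}\in\cF\subset\Lambda$ via the explicit inverse series of Theorem~\ref{th:FO} together with Theorem~\ref{th:embeddings}. Your version merely spells out a little more explicitly the use of $P_+=I-P_-$ and the closedness of $\Lambda$.
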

\begin{proof}
(a) If $A_+$ is invertible in $\cB$, then from Theorem~\ref{th:FO}
it follows that $A_+^{-1}\in\cF$. Then from Theorem~\ref{th:embeddings}
we see that $A_\pm\in\Lambda$ and $A_+$ is invertible in the algebra
$\Lambda$. Hence the coset $A_+^\pi=A_++\cK$ is invertible in the
quotient algebra $\Lambda^\pi$, which implies the invertibility of
the coset $A_+^\pi+\cJ_{+\infty}^\pi$ in the quotient algebra
$\Lambda_{+\infty}^\pi$. It remains to observe that
\[
N^\pi+\cJ_{+\infty}^\pi=(A_+P_++A_-P_-)^\pi+\cJ_{+\infty}^\pi
=A_+^\pi+\cJ_{+\infty}^\pi.
\]
Part (a) is proved. The proof of part (b) is analogous.
\end{proof}
\subsection{Invertibility in the quotient algebras $\Lambda_{\xi,x}^\pi$ with $(\xi,x)\in\Delta\times\mR$}
\begin{lem}\label{le:Op-bxi-in-Lambda}
Suppose $\alpha$ is a slowly oscillating shift and $\omega$ is its
exponent function. For $\xi\in\Delta$, let the function $\fb_\xi$ be
defined by \eqref{eq:bxi-1.2}--\eqref{eq:bxi-1.3}.
Then the operator $\Phi^{-1}\operatorname{Op}(\fb_\xi)\Phi$
belongs to the algebra $\Lambda$.
\end{lem}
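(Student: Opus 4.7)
The plan is to show that $T := \Phi^{-1}\operatorname{Op}(\fb_\xi)\Phi$ commutes modulo $\cK$ with every element of $\cZ$ by verifying this on a generating set and using that the commutant (modulo $\cK$) of a fixed bounded operator is a closed subalgebra of $\cB$. First, note that $T \in \cB$: by Lemma~\ref{le:bxi} we have $\fb_\xi \in \cE(\mR_+,V(\mR)) \subset C_b(\mR_+,V(\mR))$, so boundedness of $\operatorname{Op}(\fb_\xi)$ on $L^p(\mR_+,d\mu)$ follows from Theorem~\ref{th:boundedness-PDO}, and conjugation by the isometric isomorphism $\Phi$ yields boundedness on $L^p(\mR_+)$.

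Set $\mathcal{M} := \{A \in \cB : AT - TA \in \cK\}$. Since $\cK$ is a closed two-sided ideal of $\cB$, $\mathcal{M}$ is a closed subalgebra of $\cB$ containing $I$. Consequently it is enough to show that the generators of $\cZ$, namely $I$, $S$, $cR$ with $c \in SO(\mR_+)$, and any $K \in \cK$, all lie in $\mathcal{M}$. The cases $I$ and $K$ are trivial (the latter because $T$ is bounded and $\cK$ is two-sided).

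For the remaining generators I would pass to the Mellin picture via $\Phi$. By Theorem~\ref{th:algebra-A}(b),
\[
\Phi S \Phi^{-1} = \operatorname{Co}(s_p) = \operatorname{Op}(\fs_p), \qquad \Phi(cR)\Phi^{-1} = c\operatorname{Co}(r_p) = \operatorname{Op}(\fg\fr_p),
\]
where $\fg(t,x) := c(t)$, $\fs_p(t,x) := s_p(x)$ and $\fr_p(t,x) := r_p(x)$; the second identity uses that $\operatorname{Op}(\fg)$ is plain multiplication by $c(t)$, which composes with any Mellin convolution by simple left multiplication at the level of symbols. By Lemma~\ref{le:g-sp-rp}, $\fg, \fs_p, \fr_p \in \cE(\mR_+,V(\mR))$, and $\cE(\mR_+,V(\mR))$ being a Banach algebra gives $\fg\fr_p \in \cE(\mR_+,V(\mR))$. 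Lemma~\ref{le:bxi} provides $\fb_\xi \in \cE(\mR_+,V(\mR))$. Theorem~\ref{th:comp-commutators-PDO} then yields
\[
\operatorname{Op}(\fb_\xi)\operatorname{Op}(\fs_p) - \operatorname{Op}(\fs_p)\operatorname{Op}(\fb_\xi) \in \cK(L^p(\mR_+,d\mu))
\]
and similarly for $\operatorname{Op}(\fg\fr_p)$. Conjugating by $\Phi$ (which preserves compactness) gives $ST - TS, \, (cR)T - T(cR) \in \cK$, so $S, cR \in \mathcal{M}$, finishing the proof.

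The main substantive ingredient is Lemma~\ref{le:bxi}, which is already established; beyond that, the only routine obstacle is confirming the composition identity $\operatorname{Op}(\fg)\operatorname{Op}(\fr_p) = \operatorname{Op}(\fg\fr_p)$ used above, which follows immediately from the integral definition of $\operatorname{Op}$ because $\fg$ is independent of $x$ and factors out of the oscillatory integral.
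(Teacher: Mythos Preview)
Your proof is correct and follows essentially the same route as the paper: both verify membership in $\Lambda$ by checking that the Mellin pseudodifferential operator commutes modulo compacts with the generators $S$ and $cR$, using that $\fs_p$, $\fg\fr_p$ (the paper writes this as $\fg_p(t,x):=g(t)r_p(x)$) and $\fb_\xi$ all lie in $\cE(\mR_+,V(\mR))$ and invoking Theorem~\ref{th:comp-commutators-PDO}. Your version is slightly more explicit about the closed-subalgebra reduction and the identity $\operatorname{Op}(\fg)\operatorname{Op}(\fr_p)=\operatorname{Op}(\fg\fr_p)$, but the substance is identical.
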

\begin{proof}
Let $g\in SO(\mR_+)$ and
\[
\fg_p(t,x):=g(t)r_p(x),
\quad
\fs_p(t,x)=s_p(x),
\quad
(t,x)\in\mR_+\times\mR.
\]
Lemmas~\ref{le:g-sp-rp} and \ref{le:bxi} imply that the functions $\fs_p,\fg_p$,
and $\fb_\xi$ belong to the algebra $\cE(\mR_+,V(\mR))$.
From this observation and Theorem~\ref{th:comp-commutators-PDO} it follows that
\begin{equation}\label{eq:a5}
\begin{split}
&
\operatorname{Op}(\fs_p)
\operatorname{Op}(\fb_\xi)
-
\operatorname{Op}(\fb_\xi)
\operatorname{Op}(\fs_p)
\in\cK(L^p(\mR_+,d\mu)),
\\
&
\operatorname{Op}(\fg_p)
\operatorname{Op}(\fb_\xi)
-
\operatorname{Op}(\fb_\xi)
\operatorname{Op}(\fg_p)
\in\cK(L^p(\mR_+,d\mu)).
\end{split}
\end{equation}
Since $\cE(\mR_+,V(\mR))\subset C_b(\mR_+,V(\mR))$, we infer from
Theorem~\ref{th:boundedness-PDO} that
$B_\xi:=\Phi^{-1}\operatorname{Op}(\fb_\xi)\Phi\in\cB$.
Then relations \eqref{eq:a5} and the equalities
\[
S=\Phi^{-1}\operatorname{Co}(s_p)\Phi=\Phi^{-1}\operatorname{Op}(\fs_p)\Phi,
\quad
gR=g\Phi^{-1}\operatorname{Co}(r_p)\Phi=\Phi^{-1}\operatorname{Op}(\fg_p)\Phi
\]
(see Theorem~\ref{th:algebra-A}(b)) imply that $SB_\xi-B_\xi S\in\cK$
and $gRB_\xi-B_\xi gR\in\cK$. Hence $B_\xi\in\Lambda$.
\end{proof}
\begin{lem}\label{le:WR2}
Suppose $\alpha$ is a slowly oscillating shift and $\omega$ is its exponent
function. If $(\xi,x)\in\Delta\times\mR$, then
\[
(W_\alpha R^2)^\pi-e^{i\omega(\xi)(x+i/p)}(r_p(x))^2 I^\pi\in\cJ_{\xi,x}^\pi.
\]
\end{lem}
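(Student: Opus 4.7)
The plan is to realize $W_\alpha R^2$ as a Mellin pseudodifferential operator, apply Lemma~\ref{le:bxi} to split off its value at $(\xi,x)$, and exploit local invertibility of $R^2$ to absorb the residual factor $\omega-\omega(\xi)$ into the ideal.

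First I would verify the identity
\[
\Phi (W_\alpha R^2) \Phi^{-1} = \operatorname{Op}(\fa), \qquad \fa(t, y) = e^{i\omega(t)(y+i/p)}(r_p(y))^2,
\]
by direct computation: since $\alpha(t)=te^{\omega(t)}$ and $(\Phi f)(t)=t^{1/p}f(t)$, one has $(\Phi W_\alpha\Phi^{-1}h)(t)=(t/\alpha(t))^{1/p}h(\alpha(t))=e^{-\omega(t)/p}h(te^{\omega(t)})$; composing with $\Phi R^2\Phi^{-1}=\operatorname{Co}(r_p^2)$ from Theorem~\ref{th:algebra-A}(b) and writing the Mellin transform explicitly reproduces the iterated integral representing $\operatorname{Op}(\fa)$ from Theorem~\ref{th:boundedness-PDO}.

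Next I would split the symbol as
\[
\fa(t,y)-\fa(\xi,x)=\bigl[\fa(t,y)-\fa(\xi,y)\bigr]+\bigl[\fa(\xi,y)-\fa(\xi,x)\bigr].
\]
The second bracket depends only on $y$, so its $\operatorname{Op}$ equals $\operatorname{Co}(\fa(\xi,\cdot)-\fa(\xi,x))$; the symbol lies in $C_p(\overline{\mR})$ (continuous on $\mR$, decaying at $\pm\infty$, with finite total variation), so the conjugate $\Phi^{-1}\operatorname{Co}(\fa(\xi,\cdot)-\fa(\xi,x))\Phi$ lies in $\cA\subset\cZ$ by Theorem~\ref{th:algebra-A}(a), and by Corollary~\ref{co:algebra-Api}(b) its Gelfand transform at $(\xi,x)$ equals $\fa(\xi,x)-\fa(\xi,x)=0$, placing its coset in $\cI_{\xi,x}^\pi\subset\cJ_{\xi,x}^\pi$. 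For the first bracket, Lemma~\ref{le:bxi} gives the factorization $(\omega(t)-\omega(\xi))r_p(y)\fb_\xi(t,y)$; since $\omega(t)-\omega(\xi)$ depends only on $t$, it pulls out of $\operatorname{Op}$, giving
\[
\Phi^{-1}\operatorname{Op}(\fa-\fa(\xi,\cdot))\Phi = (\omega-\omega(\xi))I\cdot B,\qquad B:=\Phi^{-1}\operatorname{Op}(\fr_p\fb_\xi)\Phi,
\]
with $\fr_p(t,y):=r_p(y)$. Because $\cE(\mR_+,V(\mR))$ is closed under products, $\fr_p\fb_\xi\in\cE$ by Lemmas~\ref{le:g-sp-rp} and~\ref{le:bxi}, so the argument of Lemma~\ref{le:Op-bxi-in-Lambda}, which uses only membership of the symbol in $\cE$ and Theorem~\ref{th:comp-commutators-PDO}, yields $B\in\Lambda$.

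The main obstacle is to show $\bigl[(\omega-\omega(\xi))I\cdot B\bigr]^\pi\in\cJ_{\xi,x}^\pi$, since $cI$ with $c:=\omega-\omega(\xi)\in SO(\mR_+)$ vanishing at $\xi$ is not generally in $\cZ$. The key is that the central element $R^2\in\cA\subset\cZ$ has nonzero Gelfand transform $(r_p(x))^2$ at $(\xi,x)$ (since $x\in\mR$ and $r_p$ vanishes only at $\pm\infty$), so by Corollary~\ref{co:algebra-Api}(b),
\[
(R^2)^\pi-(r_p(x))^2 I^\pi\in\cI_{\xi,x}^\pi\subset\cJ_{\xi,x}^\pi.
\]
Multiplying this relation on the left by $(cI)^\pi$ and on the right by $B^\pi$, and using the operator identity $cI\cdot R^2=cR^2$, yields
\[
(cR^2\, B)^\pi\equiv (r_p(x))^2\bigl[(cI)\,B\bigr]^\pi\pmod{\cJ_{\xi,x}^\pi}.
\]
Finally, $cR^2=(cR)R\in\cZ$ has Gelfand transform $c(\xi)(r_p(x))^2=0$ at $(\xi,x)$, so $(cR^2)^\pi\in\cI_{\xi,x}^\pi$, and hence $(cR^2)^\pi B^\pi\in\cJ_{\xi,x}^\pi$ by two-sidedness. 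Dividing by the nonzero scalar $(r_p(x))^2$ gives $\bigl[(cI)B\bigr]^\pi\in\cJ_{\xi,x}^\pi$ and completes the proof.
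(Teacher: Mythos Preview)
Your argument is correct and follows the same overall strategy as the paper: identify $W_\alpha R^2$ with $\Phi^{-1}\operatorname{Op}(\fa)\Phi$, split $\fa-\fa(\xi,x)$, invoke Lemma~\ref{le:bxi} for the $t$-dependent piece, and show each piece lands in $\cJ_{\xi,x}^\pi$. The tactical choices differ in two places. For the $y$-dependent remainder you treat $\fa(\xi,\cdot)-\fa(\xi,x)$ in one stroke as an element of $C_p(\overline{\mR})$; the paper instead factors $\fa(\xi,y)=\fc(\xi,y)r_p(y)$ and writes this as $\big(\fc(\xi,y)-\fc(\xi,x)\big)r_p(y)+\fc(\xi,x)\big(r_p(y)-r_p(x)\big)$, handling the two summands through $R_{\pi-i\omega(\xi)}$ and $R$ separately---your shortcut here is cleaner. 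For the $t$-dependent piece you keep $r_p$ inside the symbol of $B$ and then use the $R^2$ trick to convert $(\omega-\omega(\xi))I$ into $(\omega-\omega(\xi))R^2$ modulo $\cJ_{\xi,x}^\pi$; the paper instead pulls the single factor $r_p$ out as a right Mellin convolution, obtaining $\Phi^{-1}\operatorname{Op}(\fa-\fa(\xi,\cdot))\Phi=(\omega-\omega(\xi))R\cdot\Phi^{-1}\operatorname{Op}(\fb_\xi)\Phi$ (up to a compact commutator), so that $(\omega-\omega(\xi))R\in\cZ$ already has vanishing Gelfand transform at $(\xi,x)$ and no auxiliary $R^2$ manipulation is needed. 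Both routes are valid; the paper's factorization avoids your extra step, while your two-term split of the $y$-part avoids the paper's detour through $R_{\pi-i\omega(\xi)}$.
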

\begin{proof}
From \cite[formula 3.194.4]{GR07} it follows that for $k>0$ and $y\in\mR$,
\[
\frac{1}{\pi i}\int_{\mR_+}\frac{t^{1/p}}{1+kt}t^{-iy}\frac{dt}{t}
=
\frac{1}{k^{1/p-iy}}\cdot\frac{1}{i\sin[\pi(1/p-iy)]}
=e^{i(y+i/p)\log k}r_p(y).
\]
Taking the inverse Mellin transform, we get for $k,t\in\mR_+$,
\[
\frac{1}{\pi i}\frac{t^{1/p}}{1+kt}=\frac{1}{2\pi}\int_\mR e^{i(y+i/p)\log k}r_p(y)t^{iy}dy.
\]
Assume that $f\in C_0^\infty(\mR_+)$. Since $\alpha(\tau)=e^{\omega(\tau)}\tau$,
from the above identity it follows that for $t\in\mR_+$,
\[
\begin{split}
(\Phi W_\alpha R\Phi^{-1}f)(t)
&=
\frac{1}{\pi i}\int_{\mR_+}\frac{f(\tau)(t/\tau)^{1/p}}{\tau+\alpha(t)}d\tau
=
\frac{1}{\pi i}\int_{\mR_+}\frac{f(\tau)(t/\tau)^{1/p}}{1+e^{\omega(t)}(t/\tau)}\frac{d\tau}{\tau}
\\
&=
\frac{1}{2\pi}\int_{\mR_+}\left(\int_\mR e^{i\omega(t)(y+i/p)}r_p(y)\left(\frac{t}{\tau}\right)^{iy}dy\right)f(\tau)\frac{d\tau}{\tau}
\\
&=
\frac{1}{2\pi}\int_\mR dy
\int_{\mR_+}e^{i\omega(t)(y+i/p)}r_p(y)\left(\frac{t}{\tau}\right)^{iy}f(\tau)\frac{d\tau}{\tau}.
\end{split}
\]
Hence, for $f\in C_0^\infty(\mR_+)$, we have
$\Phi W_\alpha R\Phi^{-1} f=\operatorname{Op}(\fc)f$, where
\[
\fc(t,y):=e^{i\omega(t)(y+i/p)}r_p(y),
\quad
(t,y)\in(\mR_+\times\mR)\cup(\Delta\times\mR).
\]
By Lemma~\ref{le:fc}, this function belongs to the algebra $C_b(\mR_+,V(\mR))$.
Then from Theorem~\ref{th:boundedness-PDO} it follows that $\operatorname{Op}(\fc)$
extends to a bounded operator on $L^p(\mR_+,d\mu)$ and therefore
\begin{equation}\label{eq:WR2-1}
\Phi W_\alpha R\Phi^{-1}=\operatorname{Op}(\fc).
\end{equation}
On the other hand, from Theorem~\ref{th:algebra-A}(b) and Lemma~\ref{le:g-sp-rp}
it follows that
\begin{equation}\label{eq:WR2-2}
\Phi R\Phi^{-1}=\operatorname{Co}(r_p)=\operatorname{Op}(\fr_p),
\end{equation}
where $\fr_p(t,y)=r_p(y)$ for $(t,y)\in\mR_+\times\mR$. From \eqref{eq:WR2-1}
and \eqref{eq:WR2-2} we obtain
\begin{equation}\label{eq:WR2-3}
W_\alpha R^2=\Phi^{-1}\operatorname{Op}(\fa)\Phi,
\end{equation}
where
\[
\fa(t,y):=\fc(t,y)\fr_p(t,y)=e^{i\omega(t)(y+i/p)}(r_p(y))^2,
\
(t,y)\in(\mR_+\times\mR)\cup(\Delta\times\mR).
\]
Let us represent this function in the form
\begin{align}
\fa(t,y)
&=
\fa(t,y)-\fa(\xi,y)+\fc(\xi,y)r_p(y)
\nonumber
\\
&=
\big(\fa(t,y)-\fa(\xi,y)\big)+
\big(\fc(\xi,y)-\fc(\xi,x)\big)\fr_p(t,y)
\nonumber
\\
&\quad+\fc(\xi,x)\big(\fr_p(t,y)-\fr_p(t,x)\big)+\fa(\xi,x).
\label{eq:WR2-4}
\end{align}
From Lemma~\ref{le:bxi} and Theorem~\ref{th:algebra-A}(b) it follows that
\[
\Phi^{-1}\operatorname{Op}\big(\fa-\fa(\xi,\cdot)\big)\Phi
=
\big(\omega-\omega(\xi)\big)R\Phi^{-1}\operatorname{Op}(\fb_\xi)\Phi,
\]
where $\fb_\xi\in\cE(\mR_+,V(\mR))$. By Lemma~\ref{le:Op-bxi-in-Lambda}, the operator
$\Phi^{-1}\operatorname{Op}(\fb_\xi)\Phi$ belongs to the algebra $\Lambda$.
It is easy to see that by Corollary~\ref{co:algebra-Api}(b),
\[
\big(\big[\big(\omega-\omega(\xi)\big)R\big]^\pi\big)\widehat{\hspace{2mm}}(\xi,x)=
\big(\omega(\xi)-\omega(\xi)\big)r_p(x)=0.
\]
Therefore $[(\omega-\omega(\xi))R]^\pi\in\cI_{\xi,x}^\pi$ and thus
\begin{equation}\label{eq:WR2-5}
\big[\Phi^{-1}\operatorname{Op}\big(\fa-\fa(\xi,\cdot)\big)\Phi\big]^\pi
=
\big[\big(\omega-\omega(\xi)\big)R\Phi^{-1}\operatorname{Op}(\fb_\xi)\Phi\big]^\pi\in\cJ_{\xi,x}^\pi.
\end{equation}
Since $\omega(\xi)\in\mR$, we see that $\operatorname{Re}(\pi-i\omega(\xi))=\pi\in(0,2\pi)$.
From Theorem~\ref{th:algebra-A}(b) we conclude that
\[
\Phi^{-1}\operatorname{Op}(\fc(\xi,\cdot))\Phi=
\Phi^{-1}\operatorname{Co}(r_{p,\pi-i\omega(\xi)})\Phi=
R_{\pi-i\omega(\xi)}\in\cA.
\]
Hence, by Corollary~\ref{co:algebra-Api}(b),
\[
\begin{split}
\big(\big[\Phi^{-1}\operatorname{Op}\big(\fc(\xi,\cdot)-\fc(\xi,x)\big)\Phi\big]^\pi\big)
\widehat{\hspace{2mm}}(\xi,x)
&=
\big([R_{\pi-i\omega(\xi)}-\fc(\xi,x)I]^\pi\big)\widehat{\hspace{2mm}}(\xi,x)
\\
&=
r_{p,\pi-i\omega(\xi)}(x)-\fc(\xi,x)
=0.
\end{split}
\]
Therefore
\[
\big[\Phi^{-1}\operatorname{Op}\big(\fc(\xi,\cdot)-\fc(\xi,x)\big)\Phi\big]^\pi
=
[R_{\pi-i\omega(\xi)}-\fc(\xi,x)I]^\pi\in\cI_{\xi,x}^\pi
\]
and thus
\begin{equation}\label{eq:WR2-6}
\big[\Phi^{-1}\operatorname{Op}\big[\big(\fc(\xi,\cdot)-\fc(\xi,x)\big)\fr_p\big]\Phi\big]^\pi
=
\big[\big(R_{\pi-i\omega(\xi)}-\fc(\xi,x)I\big)R\big]^\pi\in\cJ_{\xi,x}^\pi.
\end{equation}
Finally, in view of Corollary~\ref{co:algebra-Api}(b),
\[
\big([\Phi^{-1}\operatorname{Op}(\fr_p-r_p(x))\Phi]^\pi\big)\widehat{\hspace{2mm}}(\xi,x)
=
\big([R-r_p(x)I]^\pi\big)\widehat{\hspace{2mm}}(\xi,x)=r_p(x)-r_p(x)=0.
\]
Hence
\begin{equation}\label{eq:WR2-7}
\fc(\xi,x)[\Phi^{-1}\operatorname{Op}(\fr_p-r_p(x))\Phi]^\pi\in\cI_{\xi,x}^\pi\subset\cJ_{\xi,x}^\pi.
\end{equation}
Combining \eqref{eq:WR2-3}--\eqref{eq:WR2-7}, we arrive at
\[
(W_\alpha R^2)^\pi-e^{i\omega(\xi)(x+i/p)}(r_p(x))^2 I^\pi
=
\big[\Phi^{-1}\operatorname{Op}(\fa)\Phi-\fa(\xi,x)I\big]^\pi\in\cJ_{\xi,x}^\pi,
\]
which finishes the proof.
\end{proof}
\begin{thm}\label{th:suf-ii}
Suppose $a,b,c,d\in SO(\mR_+)$, $\alpha\in SOS(\mR_+)$, and the operator $N$
is given by \eqref{eq:def-N}.
If $n_\xi(x)\ne 0$ for some $(\xi,x)\in\Delta\times\mR$, where the function
$n_\xi$ is defined by \eqref{eq:def-n}, then the coset
$N^\pi+\cJ_{\xi,x}^\pi$ is invertible in the quotient algebra $\Lambda_{\xi,x}^\pi$.
\end{thm}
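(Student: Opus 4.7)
The plan is to show that
\[
N^\pi - n_\xi(x)\,I^\pi \in \cJ_{\xi,x}^\pi.
\]
Once this congruence is established, the nonzero scalar $n_\xi(x)$ makes $n_\xi(x)\,I^\pi$ trivially invertible in $\Lambda^\pi$, and hence in the quotient $\Lambda_{\xi,x}^\pi$, yielding the invertibility of $N^\pi+\cJ_{\xi,x}^\pi$. The strategy is to replace each building block appearing in $N$, namely the projections $P_\pm$, the coefficients $aI$, $cI$, and the shift operator $W_\alpha$, by a scalar multiple of the identity modulo $\cJ_{\xi,x}^\pi$, with Lemma~\ref{le:WR2} supplying the one genuinely nontrivial identification.

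The easy reductions come from the fact that $P_\pm$, $R$, $R^2$, and $gR$ (for $g\in SO(\mR_+)$) all lie in $\cZ$. Corollary~\ref{co:algebra-Api}(b) immediately yields
\[
\Bigl(P_\pm-\tfrac{1\pm s_p(x)}{2}I\Bigr)^{\!\pi},\ \ (R-r_p(x)I)^\pi,\ \ (R^2-r_p(x)^2 I)^\pi \in \cI_{\xi,x}^\pi \subset \cJ_{\xi,x}^\pi.
\]
The first non-obvious step treats the multiplication operators. Writing $R^\pi = r_p(x) I^\pi + J$ with $J\in\cJ_{\xi,x}^\pi$ and using that $\cJ_{\xi,x}^\pi$ is a two-sided ideal of $\Lambda^\pi$, I compute $(aR)^\pi=(aI)^\pi R^\pi\equiv r_p(x)(aI)^\pi$ modulo $\cJ_{\xi,x}^\pi$; on the other hand $(aR)^\pi\in\cZ^\pi$ has Gelfand transform $a(\xi)r_p(x)$ at $(\xi,x)$, so $(aR)^\pi\equiv a(\xi)r_p(x) I^\pi$ as well. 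Since $r_p(x) = 1/\sinh[\pi(x+i/p)]$ is a nonzero complex scalar (the imaginary part $1/p\in(0,1)$ keeps $x+i/p$ off the zero set of $\sinh$), cancellation gives $(aI)^\pi \equiv a(\xi) I^\pi \pmod{\cJ_{\xi,x}^\pi}$, and analogously for $bI$, $cI$, $dI$.

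The identical scheme, now with $R^2$ replacing $R$, handles the shift: Lemma~\ref{le:WR2} provides $(W_\alpha R^2)^\pi \equiv e^{i\omega(\xi)(x+i/p)}\,r_p(x)^2\,I^\pi$ modulo $\cJ_{\xi,x}^\pi$, while $(W_\alpha R^2)^\pi=W_\alpha^\pi(R^2)^\pi\equiv r_p(x)^2\,W_\alpha^\pi$ from the reductions above. Cancelling the nonzero scalar $r_p(x)^2$ produces
\[
W_\alpha^\pi \equiv e^{i\omega(\xi)(x+i/p)}\,I^\pi \pmod{\cJ_{\xi,x}^\pi}.
\]

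With each ingredient reduced to a scalar multiple of $I^\pi$, substituting into the definition \eqref{eq:def-N} of $N$ and using that $\cJ_{\xi,x}^\pi$ is an ideal of $\Lambda^\pi$ gives
\[
N^\pi \equiv \Bigl\{\bigl[a(\xi)-b(\xi)e^{i\omega(\xi)(x+i/p)}\bigr]\tfrac{1+s_p(x)}{2} + \bigl[c(\xi)-d(\xi)e^{i\omega(\xi)(x+i/p)}\bigr]\tfrac{1-s_p(x)}{2}\Bigr\}\,I^\pi,
\]
which is exactly $n_\xi(x)I^\pi$ by \eqref{eq:def-n}, completing the proof. The main obstacle is the scalar identification of $W_\alpha^\pi$, since $W_\alpha$ lies in $\Lambda$ but not in $\cZ$; this has already been overcome by the Mellin pseudodifferential computations of Section~\ref{sec:functions-in-fC} assembled in Lemma~\ref{le:WR2}, so in the present section only the routine algebraic manipulations in the local algebra remain.
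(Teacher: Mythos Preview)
Your proof is correct and rests on the same pillars as the paper's: the Gelfand-transform identifications in $\cZ^\pi$, the ideal property of $\cJ_{\xi,x}^\pi$, and Lemma~\ref{le:WR2}. The organization differs slightly. The paper first replaces $P_\pm$ by the auxiliary operators $H_\pm=\frac{1\pm s_p(x)}{2}\,r_p(x)^{-2}R^2$, so that all products $aH_\pm$, $bH_\pm$, $W_\alpha H_\pm$ land in $\cZ$ or in the scope of Lemma~\ref{le:WR2}; it then reads off the scalar value of each such product. You instead show directly that each individual factor $(aI)^\pi$, $(bI)^\pi$, $(cI)^\pi$, $(dI)^\pi$, $W_\alpha^\pi$, $P_\pm^\pi$ is congruent to a scalar multiple of $I^\pi$ modulo $\cJ_{\xi,x}^\pi$, by first multiplying by $R$ or $R^2$ and then cancelling the nonzero scalar $r_p(x)$ or $r_p(x)^2$. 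Your route gives the slightly stronger (and reusable) statements $(aI)^\pi\equiv a(\xi)I^\pi$ and $W_\alpha^\pi\equiv e^{i\omega(\xi)(x+i/p)}I^\pi$ in $\Lambda_{\xi,x}^\pi$, at no extra cost; the paper's route keeps everything inside $\cZ^\pi$ until the very end. Both are clean and the logical content is the same.
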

\begin{proof}
Fix $(\xi,x)\in\Delta\times\mR$ and consider the operators
\begin{equation}\label{eq:a4}
H_\pm:=\frac{1\pm s_p(x)}{2}\left(\frac{1}{r_p(x)}\right)^2R^2.
\end{equation}
Then from Corollary~\ref{co:algebra-Api}(b) it follows that
\[
(H_\pm^\pi)\widehat{\hspace{2mm}}(\xi,x)=\frac{1\pm s_p(x)}{2}.
\]
Therefore $(P_\pm-H_\pm)^\pi\in\cI_{\xi,x}^\pi$ and
\[
\big[
(aI-bW_\alpha)(P_+-H_+)+(cI-dW_\alpha)(P_--H_-)
\big]^\pi\in\cJ_{\xi,x}^\pi,
\]
whence
\begin{equation}\label{eq:suf-ii-1}
N^\pi+\cJ_{\xi,x}^\pi=
\big[
(aI-bW_\alpha)H_++(cI-dW_\alpha)H_-
\big]^\pi+\cJ_{\xi,x}^\pi.
\end{equation}
Since $H_\pm\in\operatorname{id}_\cA\{R\}\subset\cA$, we infer from
Corollary~\ref{co:comp-commutators-algebra} that
\begin{equation}\label{eq:suf-ii-2}
(W_\alpha H_+)^\pi=(H_+W_\alpha)^\pi,
\quad
(W_\alpha H_-)^\pi=(H_-W_\alpha)^\pi.
\end{equation}
Taking into account Corollary~\ref{co:algebra-Api}(b), it is easy to see that
\begin{equation}\label{eq:suf-ii-3}
\big[(bH_+)^\pi-(b(\xi)H_+)^\pi\big]\widehat{\hspace{2mm}}(\xi,x)=0,
\quad
\big[(dH_-)^\pi-(d(\xi)H_-)^\pi\big]\widehat{\hspace{2mm}}(\xi,x)=0
\end{equation}
and
\[
\big[(aH_+)^\pi-(a(\xi)H_+)^\pi\big]\widehat{\hspace{2mm}}(\xi,x)=0,
\quad
\big[(cH_-)^\pi-(c(\xi)H_-)^\pi\big]\widehat{\hspace{2mm}}(\xi,x)=0.
\]
Hence
\begin{equation}\label{eq:suf-ii-4}
(aH_+)^\pi-(a(\xi)H_+)^\pi,\
(cH_-)^\pi-(c(\xi)H_-)^\pi
\in\cI_{\xi,x}^\pi\subset\cJ_{\xi,x}^\pi.
\end{equation}
Taking into account \eqref{eq:suf-ii-2}--\eqref{eq:suf-ii-3}, we also see
that
\begin{align}
&
(bW_\alpha H_+)^\pi-(b(\xi)W_\alpha H_+)^\pi=[(bH_+)^\pi-(b(\xi)H_+)^\pi]W_\alpha^\pi\in\cJ_{\xi,x}^\pi,
\label{eq:suf-ii-5}
\\
&
(dW_\alpha H_-)^\pi-(d(\xi)W_\alpha H_-)^\pi=[(dH_-)^\pi-(d(\xi)H_-)^\pi]W_\alpha^\pi\in\cJ_{\xi,x}^\pi.
\label{eq:suf-ii-6}
\end{align}
From \eqref{eq:suf-ii-1} and \eqref{eq:suf-ii-4}--\eqref{eq:suf-ii-6} it
follows that
\begin{equation}\label{eq:suf-ii-7}
N^\pi+\cJ_{\xi,x}^\pi=
\big(a(\xi)H_+-b(\xi)W_\alpha H_++c(\xi)H_--d(\xi)W_\alpha H_-\big)^\pi+\cJ_{\xi,x}^\pi.
\end{equation}
It is easy to see that
\[
\big(\big[(1/r_p(x))^2R^2-I\big]^\pi\big)\widehat{\hspace{2mm}}(\xi,x)=0.
\]
Hence
\begin{equation}\label{eq:suf-ii-8}
H_\pm^\pi-\frac{1\pm s_p(x)}{2}I^\pi\in\cI_{\xi,x}^\pi\subset\cJ_{\xi,x}^\pi.
\end{equation}
By Lemma~\ref{le:WR2} and \eqref{eq:a4},
\begin{equation}\label{eq:suf-ii-9}
(W_\alpha H_\pm)^\pi-e^{i\omega(\xi)(x+i/p)}\frac{1\pm s_p(x)}{2}\,I^\pi\in\cJ_{\xi,x}^\pi.
\end{equation}
Combining \eqref{eq:suf-ii-7}--\eqref{eq:suf-ii-9}, we arrive at the relation
\[
N^\pi+\cJ_{\xi,x}^\pi=n_\xi(x)I^\pi+\cJ_{\xi,x}^\pi,
\]
where $n_\xi(x)$ is given by \eqref{eq:def-n}. If $n_\xi(x)\ne 0$, then one
can check straightforwardly that $(1/n_\xi(x))I^\pi+\cJ_{\xi,x}^\pi$ is the
inverse of the coset $N^\pi+\cJ_{\xi,x}^\pi$ in the quotient algebra
$\Lambda_{\xi,x}^\pi$.
\end{proof}
\subsection{Proof of Theorem~\ref{th:main}}
If condition (i) of Theorem~\ref{th:main}
is fulfilled, then by Theorem~\ref{th:suf-i} the cosets $N^\pi+\cJ_{\pm\infty}^\pi$
are invertible in the quotient algebras $\Lambda_{\pm\infty}^\pi$, respectively.
On the other hand, if condition (ii) of Theorem~\ref{th:main} holds,
then in view of Theorem~\ref{th:suf-ii}, the coset $N^\pi+\cJ_{\xi,x}^\pi$
is invertible in the quotient algebra $\Lambda_{\xi,x}^\pi$ for every
pair $(\xi,x)\in\Delta\times\mR$. Then, by Theorem~\ref{th:localization-realization},
the operator $N\in\Lambda$ is Fredholm.
\qed

\end{document}